\documentclass[11pt,a4paper]{article} 

\usepackage{geometry}
\usepackage{amsmath,amssymb,amsthm,mathtools}
\usepackage{enumitem,bm}
\usepackage{hyperref}
\usepackage[nameinlink,capitalise]{cleveref}

\hypersetup{colorlinks=true,linkcolor=blue,citecolor=blue,urlcolor=blue}

\usepackage{tikz} 
\usetikzlibrary{shapes.geometric, arrows.meta, positioning, calc} 
\usetikzlibrary{fit, backgrounds}
\definecolor{figblue}{HTML}{1F4E79}
\definecolor{figamber}{HTML}{A85F00}

\usepackage{booktabs} 

\AtBeginDocument{%
  \setlength{\abovedisplayskip}{7pt plus 2pt minus 3pt}%
  \setlength{\belowdisplayskip}{7pt plus 2pt minus 3pt}%
  \setlength{\abovedisplayshortskip}{4pt plus 2pt minus 1pt}%
  \setlength{\belowdisplayshortskip}{5pt plus 2pt minus 2pt}%
}

\newtheorem{theorem}{Theorem}[section]
\newtheorem{lemma}[theorem]{Lemma}
\newtheorem{proposition}[theorem]{Proposition}

\newtheorem{problem}[theorem]{Problem}

\theoremstyle{definition}

\newtheorem{example}[theorem]{Example}

\newtheorem{remark}{Remark}
\theoremstyle{plain}

\newcommand{\eps}{\varepsilon}

\renewcommand{\le}{\leqslant}

\renewcommand{\ge}{\geqslant}

\begin{document}

\title{Optimal spectral supersaturation for cliques and odd cycles}

\author{Hongzhang Chen\thanks{School of Mathematics and Statistics, Gansu Center for Applied Mathematics, Lanzhou University, Lanzhou, Gansu, 730000, China. Email: \url{mnhzchern@gmail.com}.} 
\and Yongtao Li\thanks{Corresponding author. 
Yau Mathematical Sciences Center (YMSC), Tsinghua University, Beijing, 100084, China. Email: \url{ytli0921@hnu.edu.cn}.  Supported by the Postdoctoral Science Foundation of China (No.~2026M793384), the Postdoctoral Fellowship Program of China Postdoctoral Science Foundation (No.~GZC20261656) and the National Natural Science Foundation of China (No. 12601675).} 
}

\date{}

\maketitle

\vspace{-0.7cm}

\begin{abstract}
Motivated by the pioneering work of Bollob\'{a}s and Nikiforov, and of Ning and Zhai,  we study the spectral supersaturation problem, which asks how many copies of a fixed substructure $F$ are forced once a host graph exceeds the spectral Tur\'an threshold. Let $\lambda(G)$ denote the spectral radius of $G$, let $Y_{n,r,q}$ be the graph obtained from the Tur\'an graph $T_{n,r}$ by adding $q$ pairwise disjoint edges inside a largest part, and let $c(n,F)$ be the minimum number of copies of $F$ created by adding a single edge to $T_{n,r}$. Fang, Li, Lin and Ma proved that for every color-critical graph $F$ with $\chi (F)=r+1$, there exists a constant $\delta_F>0$ such that for all sufficiently large $n$ and all $1\le q\le \delta_F \sqrt{n}$,  the condition $\lambda(G)\ge\lambda(Y_{n,r,q})$ forces at least $q\, c(n,F)$ copies of $F$. The bound $q=O(\sqrt{n}\,)$ is tight up to a constant factor, in contrast to the linear order $n$ of the edge setting of Mubayi, Pikhurko and Yilma, but the exact constant $\delta_F$ remained unknown for any $F$.

In this paper, building on a structural result of Fang, Li, Lin and Ma, we determine the threshold $\delta_F$ when $F$ is a clique and an odd cycle. For every $r\ge2$, we denote $\delta_r :=(1-\tfrac1r)\sqrt2$ and prove that for every $\eps>0$, if $n$ is sufficiently large and $1\le q\le(\delta_r-\eps)\sqrt n$, then every $n$-vertex graph $G$ with $\lambda(G)\ge\lambda(Y_{n,r,q})$ contains at least $q\,c(n,K_{r+1})$ copies of $K_{r+1}$, and $\delta_r$ is best possible. For odd cycles, the threshold is  $1/\sqrt2$. For every $k\ge1$ and $\eps>0$, if $n$ is sufficiently large and $1\le q\le(1/\sqrt2-\eps)\sqrt n$, then every $n$-vertex graph $G$ with $\lambda(G)\ge\lambda(Y_{n,2,q})$ contains at least $q\,c(n,C_{2k+1})$ copies of $C_{2k+1}$, and $1/\sqrt2$ is best possible. Our results determine both the exact count of copies and the optimal range of $q$. The behavior in the spectral setting differs from the classical edge setting, in which  the range of $q$ is of order $n$ and the threshold is $1/r$ for cliques by Lov\'{a}sz and Simonovits, and $1/2$ for odd cycles by Pikhurko and Yilma.
\end{abstract}

{\bf Keywords:} Supersaturation; counting substructures; cliques; odd cycles; spectral radius. 

{\bf 2020 AMS Subject Classifications:} 05C50, 05C35.


\section{Introduction}

The Tur\'an-type problems ask for the maximum number of edges in a graph with no copy of a given graph $F$. 
 Let $T_{n,2}$ be the complete balanced bipartite graph.  
The well-known Mantel theorem says that if $G$ is an $n$-vertex graph with no triangle, then $e(G)\le e(T_{n,2})$, with equality if and only if $G=T_{n,2}$; see \cite{Bollobas78}.   
A central theme in extremal graph theory is the
supersaturation problem: once a graph has more edges than
the Tur\'an number of $F$, it must contain not just
one copy of $F$ but many, and one asks for the
precise count. The oldest instance is the
Erd\H{o}s--Rademacher problem on triangles, whose
study over several decades, from Erd\H{o}s
\cite{Erd1962a, Erdos1964}, Lov\'{a}sz--Simonovits \cite{LS1975, LS1983} to the complete solution of the
clique density problem by Razborov \cite{Raz2008}, 
Liu--Pikhurko--Staden \cite{LPS2020}, 
 Nikiforov \cite{Nik2011} and
Reiher \cite{Rei2016}, has produced some of the most influential
methods in the field.

Erd\H{o}s and Rademacher (see \cite{Erd1962a, Erdos1964}) extended Mantel's theorem by showing that  
if $e(G)> e(T_{n,2}) $, 
then $G$ contains at least $ \lfloor {n}/{2}\rfloor$ triangles.   
Moreover, 
Erd\H{o}s \cite{Erd1962a}
 showed that there exists a constant $\delta>0$ such that if  $n$ is sufficiently large and $1\le q<\delta n$, then every $n$-vertex graph with $\lfloor n^2/4\rfloor +q$
 edges  contains at least $q\lfloor {n}/{2}\rfloor$ triangles. 
Furthermore, 
Lov\'{a}sz and Simonovits \cite{LS1975} proved that for $1\le q < {n}/{2}$, if $G$ is an $n$-vertex graph with 
$\lfloor {n^2}/{4} \rfloor + q$ edges,   
then $G$ contains at least $q \lfloor {n}/{2}\rfloor$ 
triangles. 
The range $q <n/2$ is best possible.  Indeed, let $n$ be even and $q = {n}/{2}$. Then add $q+1$ extra edges 
to the larger partite set of $K_{\frac{n}{2}+1, \frac{n}{2}-1}$ in such a way that no triangle is formed 
among these $q+1$ edges.  
This yields a graph with ${n^2}/{4} + q$ edges and $(q+1)({n}/{2} -1) = 
{n^2}/{4} -1$ triangles, which is fewer than $q \lfloor {n}/{2}\rfloor$.  
We refer the reader to 
\cite{XK2021,LM2022-Erd-Rad,BC2023, MY2023} for recent developments. 

\smallskip 
A graph is called {\it color-critical} if it has an edge
whose removal lowers its chromatic number. This large
family, which includes cliques and odd cycles, plays a
central role in extremal graph theory. Let $T_{n,r}$
denote the $n$-vertex complete balanced $r$-partite
graph. Simonovits \cite{Sim1966} proved that for every
color-critical graph $F$ with chromatic number
$\chi(F)=r+1$, if $n$ is large enough and $G$ is an
$n$-vertex graph with no copy of $F$, then
$e(G)\le e(T_{n,r})$, with equality if and only if
$G=T_{n,r}$. 
Extending this result, Mubayi \cite{Mubayi2010} proved a
supersaturation theorem for all color-critical graphs. 
Let $c(n,F)$ denote the
smallest number of copies of $F$ created by adding an 
 edge to $T_{n,r}$.  
 Explicitly, $c(n,K_{r+1})=(1+o(1))(n/r)^{\,r-1}$ and $c(n,C_{2k+1})=(1+o(1))(n/2)^{\,2k-1}$.
In particular, both reduce to $\lfloor n/2\rfloor$ when $r=2$ and $k=1$, recovering the
Erd\H{o}s--Rademacher count for triangles. 

\begin{theorem}[Mubayi \cite{Mubayi2010}] \label{thm-Mubayi} 
Let $F$ be a color-critical graph with $\chi(F)=r+1$.
There exists a constant $\delta_F>0$ such that for all sufficiently large $n$, and for every integer $q$ with  
$1\le q\le\delta_F \,n$, if $G$ is an $n$-vertex graph 
with $e(G)\ge e(T_{n,r})+q$, 
then  $G$ contains at least $q\, c(n,F)$ copies of $F$.
\end{theorem}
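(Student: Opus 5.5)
We follow the stability-plus-cleaning strategy. Let $G$ have $n$ vertices and $e(G)\ge e(T_{n,r})+q$ with $1\le q\le\delta_F n$, and suppose $G$ has fewer than $q\,c(n,F)$ copies of $F$. Since $c(n,F)=\Theta(n^{v(F)-2})$, $G$ has $o(n^{v(F)})$ copies of $F$. The removal lemma then gives $G$ an $F$-free subgraph missing $o(n^2)$ edges. By the Erd\H{o}s--Simonovits stability theorem, $G$ is $o(n^2)$-close to $T_{n,r}$. Take a max-cut partition $V_1\cup\dots\cup V_r$ of $G$. Then each $|V_i|=(1/r+o(1))n$, and the number of \emph{bad} edges (inside parts) and of \emph{missing} cross edges is $o(n^2)$.

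\textbf{Step 1: remove atypical vertices.} Call a vertex $v\in V_i$ atypical if it has $\ge \beta n$ neighbours in its own part, or is non-adjacent to $\ge\beta n$ vertices of some other part, for a small fixed $\beta$. There are $o(n)$ atypical vertices. We handle them by induction on $n$. If an atypical vertex $v$ has degree $<(1-1/r)n-\gamma n$, deleting it gives a graph $G'$ with $e(G')\ge e(T_{n-1,r})+q+\gamma n/2$. We then apply a stronger statement to $G'$ and compare $c(n-1,F)$ with $c(n,F)$; the loss is a factor $1-O(1/n)$, which the extra $\gamma n$ edges over-compensate. If instead an atypical vertex has high degree, the max-cut property forces at least $\beta n$ neighbours in each of two parts. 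Through the typical vertices this yields $\Omega(n^{v(F)-1})$ copies of $F$ containing $v$, which exceeds $q\,c(n,F)$ once $\delta_F$ is small. So we may assume all vertices are typical.

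\textbf{Step 2: count copies using typical structure.} With all vertices typical, let $B$ be the set of bad edges and $M$ the set of missing cross pairs, so $|B|-|M|\ge q+(e(T_{n,r})-e(K_{|V_1|,\dots,|V_r|}))\ge q$. Each bad edge $uv$ inside $V_i$ lies in at least $c(n,F)-O(\text{deg}_M(u)+\text{deg}_M(v))\,n^{v(F)-3}$ copies of $F$. These are built by using $uv$ as the critical edge and embedding the rest of $F$ into the other parts through common neighbourhoods. Here the color-criticality of $F$ is used: $F-e$ is $r$-partite with $e$ inside one class. Also, unbalanced parts only increase the count relative to $c(n,F)$, up to the $O(\text{imbalance}\cdot n^{v(F)-3})$ terms that are absorbed into the edge surplus. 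Summing over $B$, and noting that each missing pair destroys at most $O(n^{v(F)-3})$ candidate copies per incident bad edge, the total is at least $|B|c(n,F)-O(|M|+|B|^2/n)\,n^{v(F)-2}\cdot\beta$. Copies counted twice, those containing two bad edges, number at most $O(|B|^2 n^{v(F)-4})$ in the main term.

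\textbf{Step 3: close the counting.} Writing $|B|=q+|M|+s$ with $s\ge0$, we need the gain $(|M|+s)c(n,F)$ to dominate the losses $O(\beta)|M|n^{v(F)-2}$ and $O(|B|^2 n^{v(F)-3})$. The first is fine for small $\beta$. The second is the main obstacle, since $|B|$ could a priori be much larger than $q$. Typicality gives $|B|\le\beta n^2$ only, so I would first treat the case $|B|\le 2q+2|M|$. There the loss is $O(\delta_F)\,|B|\,c(n,F)$, which is fine for small $\delta_F$. In the case $|B|>2q+2|M|$, we have $s\ge |B|/2$, and the gain $s\,c(n,F)$ beats the quadratic loss as long as $|B|\le \eta n$. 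If $|B|>\eta n$, I would instead use that the bad edges inside $V_i$ form a graph with $\Omega(n)$ edges. Choosing disjoint pairs of bad edges in different parts, or bad edges with large common cross-neighbourhoods, produces $\Omega(|B|)c(n,F)$ essentially distinct copies directly, which suffices. Getting these error terms to fit together with a uniform $\delta_F$ is where I expect the real work to be.
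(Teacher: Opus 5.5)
\textbf{The paper has no proof of this statement to compare with.} It is quoted from Mubayi's paper, so I judged your outline on its own. It follows the standard stability route: removal lemma, Erd\H{o}s--Simonovits stability, max-cut cleaning, and counting through class-edges, which is the edge analogue of Lemma~\ref{lem:local-counting}. However, the low-degree branch of Step~1 has a genuine hole.

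\textbf{The induction in Step~1 does not close.} Deleting $v$ gives $G'$ with surplus $q+\gamma n/2$. The statement you are proving applies to $G'$ only if $q+\gamma n/2\le\delta_F(n-1)$. This fails whenever $q>\delta_F(n-1)-\gamma n/2$, and so for every $q$ under the natural ordering $\delta_F<\gamma/2$. Capping the surplus of $G'$ at $\lfloor\delta_F(n-1)\rfloor$ does not help. It yields only $\lfloor\delta_F(n-1)\rfloor\,c(n-1,F)$, which can be below $q\,c(n,F)$ for $q$ near $\delta_F n$. So the over-compensation by the extra $\gamma n/2$ edges that you appeal to is never available. Your unnamed ``stronger statement'' is exactly the missing ingredient, and it does not follow from the statement itself.

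\textbf{A standard way to supply it.} First reduce to $e(G)=e(T_{n,r})+q$; deleting edges creates no copies. Then delete vertices of degree $<(1-\frac1r-\gamma)|V(G_i)|$ one at a time. Each deletion raises the surplus by at least $\gamma n/3$. If the process runs for $3\varepsilon n/\gamma$ steps, the surplus is at least $\varepsilon n^2$, and Erd\H{o}s--Simonovits supersaturation gives $\Omega(n^{v(F)})$ copies. Otherwise it stops at some $G_i$ of high minimum degree, with surplus $q_i\ge q+i\gamma n/3$. In $G_i$ every atypical vertex falls into your high-degree branch.
\begin{itemize}
\item If $i=0$, the typical-structure count gives $\texttt{\#}F\ge q\,c(n,F)$.
\item If $i\ge1$, only the approximate count $(1-o(1))\,q_i\,c(n-i,F)$ is available. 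The reason is that $q_i$ may lie far above $\delta_F n$, where the exact count is false in general (cf.\ Theorem~\ref{thm-edge-clique}). This bound is still $\Omega(\gamma n)\,c(n,F)>q\,c(n,F)$ once $\varepsilon,\delta_F\ll\gamma$.
\end{itemize}

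\textbf{The obstacle in Step~3 is an artifact of your bookkeeping.} Your argument for $|B|>\eta n$ is also not a proof as written. For a bad edge $uv$, count only copies in which a critical edge of $F$ is mapped to $uv$ and every other edge goes to a cross pair of $K_{|V_1|,\dots,|V_r|}$ present in $G$. Each such copy contains exactly one bad edge, so copies attached to different bad edges are distinct. There is then no $|B|^2$ term and no case split on $|B|$. The only losses are the following.
\begin{itemize}
\item Missing pairs: at most $\sum_w d_B(w)d_M(w)\,O(n^{v(F)-3})+|B||M|\,O(n^{v(F)-4})=O(\beta)|M|n^{v(F)-2}$, using typicality. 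This is paid for by $|M|\,c(n,F)$.
\item Imbalance: $O((a+1)n^{v(F)-3})$ per bad edge, where $a:=\max_i\bigl|\,|V_i|-n/r\,\bigr|$. This is paid for by $D\,c(n,F)$, because $D\ge1$ forces $D=\Omega_r(a^2)$, and $|B|=q+|M|+D$ with $q\le\delta_F n$.
\end{itemize}

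\textbf{Two smaller corrections.} Your claim that ``unbalanced parts only increase the count'' is false. Enlarging the part that contains $uv$ lowers the count, which is exactly the mechanism of Example~\ref{exam-clique}; only the error-term version is usable. In the high-degree branch, you need $\Omega(n)$ neighbours of $v$ in all $r$ parts, not just two; max-cut does give this. The embeddings of $F-x$ into $N(v)$ should also be counted globally using $|M|=o(n^2)$, not greedily through typical vertices. A typical vertex may miss $\beta n$ vertices of a part, while $|N(v)\cap V_j|$ is itself only of order $\beta n$.
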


Let $\mathcal{T}_{n,r,q}$
be the family of graphs obtained from the Tur\'{a}n graph  $T_{n,r}$ by
adding $q$ edges. 
Pikhurko and Yilma \cite{PY2017} later
sharpened Theorem \ref{thm-Mubayi} by describing the extremal graphs, showing further that under the hypothesis of Theorem \ref{thm-Mubayi}, 
 if $G$ contains the fewest copies of $F$, then $G\in\mathcal{T}_{n,r,q}$. 
Throughout this paper, 
we write $\delta_{\rm edge}(F)$ for the {\it largest constant} $\delta_F$ for
which Theorem \ref{thm-Mubayi} holds; this determines the optimal range  
of $q$ in the edge setting. 
The problem of determining $\delta_{\rm edge}(F)$ 
for cliques $F$ is a well-known question of Erd\H{o}s,
which was solved by  
Lov\'asz and Simonovits
\cite{LS1983}. 

\begin{theorem}[Lov\'asz--Simonovits
\cite{LS1983}]  \label{thm-edge-clique}
Let $r\ge 2$ be an integer. Then 
$$\delta_{\rm edge}(K_{r+1})= \frac{1}{ r}.$$ 
\end{theorem}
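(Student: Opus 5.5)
This statement is the Lov\'asz--Simonovits theorem, so a proof has two halves: an upper bound showing $\delta_{\rm edge}(K_{r+1})\le 1/r$ via a construction, and a lower bound showing that every $q<n/r$ (for large $n$, with $q\le(1/r-\eps)n$ certainly sufficing) forces at least $q\,c(n,K_{r+1})$ copies of $K_{r+1}$.

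\emph{Sharpness.} I would generalize the triangle example given above. Take $n$ divisible by $r$ and $q=n/r$. Move one vertex so that the parts have sizes $n/r+1,\,n/r-1,\,n/r,\dots,n/r$. This loses exactly $1$ edge relative to $T_{n,r}$. Then add $q+1$ edges forming a matching inside the largest part. Each added edge lies in exactly $(n/r-1)(n/r)^{r-2}$ copies of $K_{r+1}$, and a matching creates no further cliques. The total count is
\[
(n/r+1)(n/r-1)(n/r)^{r-2}=(n/r)^{r}-(n/r)^{r-2}<q\,(n/r)^{r-1}=q\,c(n,K_{r+1}).
\]
Here $c(n,K_{r+1})=(n/r)^{r-1}$, attained by adding an edge inside a part. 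Hence no constant larger than $1/r$ can work.

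\emph{Lower bound.} I would proceed by a stability and progressive-induction argument in the spirit of Mubayi's Theorem~\ref{thm-Mubayi}. Let $G$ have $e(T_{n,r})+q$ edges with $q\le(1/r-\eps)n$, and suppose it has fewer than $q\,c(n,K_{r+1})$ copies of $K_{r+1}$. The first step is minimum-degree reduction. If some vertex $v$ has degree below $(1-1/r)n-c$, delete it. The resulting graph on $n-1$ vertices exceeds $e(T_{n-1,r})$ by more than $q$. Induction in $n$ then gives at least $(q+1)\,c(n-1,K_{r+1})$ cliques. The slack $\eps n$ pays for the loss $c(n,K_{r+1})-c(n-1,K_{r+1})=O(n^{r-2})$, but only when $q$ is bounded away from $n/r$. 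Handling this requires organizing the induction on $q'=q+(\text{excess})$ carefully. Alternatively, one can use the Erd\H{o}s--Simonovits supersaturation/removal to obtain few cliques $\Rightarrow$ $G$ is $o(n^2)$-close to $T_{n,r}$.

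Once $\delta(G)$ is large and $G$ is close to $T_{n,r}$, take a max-$r$-cut $V_1,\dots,V_r$. Let $B$ be the set of edges inside parts and $M$ the set of missing cross edges, so $|B|-|M|\ge q$ after accounting for part imbalance. Each bad edge $uv\in V_i$ lies in roughly $\prod_{j\ne i}|N(u)\cap N(v)\cap V_j|$ cliques, which is close to $\prod_{j\ne i}|V_j|$. Each missing edge destroys at most $O(n^{r-2})$ such cliques per incident bad edge.

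\emph{Main obstacle.} The hard step is the exact counting at the boundary: showing that unbalancing the parts is the only way to save cliques, and that it costs exactly enough. Concretely, if the sizes are $n/r+a_i$ with $\sum a_i=0$, the edge loss is about $\tfrac12\sum a_i^2$, while placing the $q+\tfrac12\sum a_i^2$ bad edges in a part of size $n/r+a$ gives per-edge count about $(n/r)^{r-1}(1-a r/n)$. Optimizing the resulting quadratic in $a$, I expect the deficit to stay positive exactly when $q<n/r$. This is the computation that pins the constant $1/r$. I would first verify it for the single-part-shift configuration, then argue via convexity that multiple shifts or mixed missing edges only increase the count.
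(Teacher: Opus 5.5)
The paper gives no proof of this statement. It is quoted from Lov\'asz--Simonovits, and the only supporting argument in the paper is the $r=2$ sharpness example. Your sharpness half generalizes that example correctly in spirit, but as written it fails. With $m=n/r$ you add $q+1=m+1$ edges ``forming a matching'' inside a part of only $m+1$ vertices, which is impossible, since a matching of $m+1$ edges needs $2m+2$ vertices. What is actually needed, as in the paper's triangle example, is that the added edges span no triangle: for instance a cycle $C_{m+1}$, or any bipartite graph with $m+1$ edges on that part. Then every $K_{r+1}$ has at most two vertices in the enlarged part and at most one in each other part. So it contains exactly one added edge, and your count $(m+1)(m-1)m^{r-2}=m^r-m^{r-2}<q\,m^{r-1}=q\,c(n,K_{r+1})$ is correct. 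Hence no constant above $1/r$ works.

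The lower bound has a genuine gap. It is a programme whose decisive step, the exact inequality for every $q\le(1/r-\eps)n$, is only asserted (``I expect'', ``argue via convexity''), and that step is the whole content of the Lov\'asz--Simonovits theorem. Concretely:

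(i) Your minimum-degree reduction raises the excess by a constant at each deleted vertex. For $q$ at the top of the range, the new excess lies outside the window of the induction hypothesis. Truncating it back leaves at most $q\,c(n-1,K_{r+1})<q\,c(n,K_{r+1})$. Widening the window means invoking the statement with a smaller $\eps$ at every step, so the induction does not close. The tools you list (removal/stability, progressive induction, perturbative counting) are the ones that give Mubayi-type results such as Theorem~\ref{thm-Mubayi}, with only some small unspecified $\delta_F$.

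(ii) Near the threshold the slack is tiny. Against the cheapest competitors the margin is $m^{r-2}(m-1-q)\approx\eps r\,m^{r-1}$. These competitors are your one-vertex shift, or deleting one cross-edge $xy$ and adding a star at $x$. That margin is a bounded number of single-edge clique counts, while there are $\Theta(n)$ bad edges. Error terms of the kind you propose, $O(n^{r-2})$ per interacting pair (missing edge, bad edge), summed over $\Theta(n)$ edges of each kind, can reach order $n^{r-1}$ (comparable to the slack) or even $n^r$ when bad and missing edges concentrate at the same vertices. So no estimate at the $O(\cdot)$ level suffices, and $o(n^2)$-closeness from removal is far too coarse a starting point.

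(iii) Checking the one-parameter quadratic for a single shifted part correctly identifies the threshold $1/r$. It says nothing about configurations that mix imbalance across several parts, concentrate missing cross-edges at endpoints of many bad edges, or contain vertices of large bad degree. ``Convexity'' is not an argument here, since you have not specified any function of the configuration that is convex and controls the clique count.
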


Apart from cliques, 
Pikhurko and Yilma \cite{PY2017} 
provided a powerful approach that allows one to determine the exact threshold $\delta_{\rm edge}(F)$ for a large number of color-critical
graphs $F$, including odd cycles, cliques with one edge removed, and complete bipartite
graphs plus an edge.

\begin{theorem}[Pikhurko--Yilma \cite{PY2017}] 
\label{thm-edge-cycle}
For every $k\ge 1$, we have 
$\delta_{\rm edge}(C_{2k+1})= 1/ 2$; 
let $r\ge 2$ and $K_{r+2}-e$ be obtained from $K_{r+2}$ by removing an edge. Then 
$\delta_{\rm edge}(K_{r+2}-e)={(r-1)}/{r^2}$. 
\end{theorem}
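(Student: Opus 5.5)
The plan is to prove each equality in two halves: a construction showing the range cannot be extended past the claimed constant, and a stability argument showing the count holds up to it. For cliques this matches the Lov\'asz--Simonovits example given above for triangles.

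\textbf{Upper bound for $C_{2k+1}$.} Let $r=2$ and fix $\eps>0$ and $q\ge(1/2+\eps)n$. Take $K_{a,b}$ with $a=\lceil n/2\rceil+1$ and $b=\lfloor n/2\rfloor-1$, so the edge count drops by about $1$ relative to $T_{n,2}$. Add $q+1$ edges inside the parts. An odd cycle must use an odd number of inside edges, since each inside edge flips the side parity. Hence if the added edges form a matching, no cycle uses exactly two of them. Cycles using three or more added edges number $O(q^3n^{2k-5})=O(n^{2k-2})$, which is negligible. A single edge inside the part of size $a$ lies in about $a^{k-1}b^{k}$ copies of $C_{2k+1}$, and this equals $c(n,C_{2k+1})(1-(2+o(1))/n)$. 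So the total is
\[
(q+1)\,c(n,C_{2k+1})\bigl(1-(2+o(1))/n\bigr)<q\,c(n,C_{2k+1})
\quad\text{once } q>(1/2+\eps)n .
\]
Hence $\delta_{\rm edge}(C_{2k+1})\le 1/2$.

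\textbf{Upper bound for $K_{r+2}-e$.} Here I would make the part sizes unbalanced and add the extra edges inside a largest part, again arranged so that pairs of added edges create few additional copies. A copy of $K_{r+2}-e$ containing one inside edge can also be built using a whole common neighbourhood, so the per-edge count has a different dependence on the part sizes. I would compute the first-order change of this count under moving one vertex, and balance it against the cost of the lost cross edges. I expect this computation to give $(r-1)/r^2$.

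\textbf{Lower bound.} Fix $q\le(\delta-\eps)n$ and take $G$ with $e(G)\ge e(T_{n,r})+q$ and the fewest copies of $F$. The steps are as follows.
\begin{enumerate}[label=(\roman*)]
\item By the removal lemma and Erd\H{o}s--Simonovits stability, $G$ is $o(n^2)$-close to $T_{n,r}$; otherwise $G$ has $\Omega(n^{v(F)})$ copies of $F$, far more than $q\,c(n,F)$. Take a max-cut $r$-partition $V_1,\dots,V_r$ of $G$.
\item Show there are no vertices of small degree. If some vertex $v$ has degree at most $(1-1/r-\eta)n$, delete it and apply induction on $n$ with $q'\ge q+\Omega(n)$ extra edges. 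This is where the margin $\eps$ is needed, to keep $q'$ in range after the renormalisation.
\item Once all degrees are near $(1-1/r)n$, each inside edge $xy\subseteq V_i$ lies in at least $(1-o(1))$ times the number of copies of $F$ it would create in the complete $r$-partite graph on the actual part sizes. From this, subtract copies lost to missing cross edges; each missing cross edge destroys only $O(n^{v(F)-4})$ copies through a given inside edge.
\end{enumerate}
This reduces the problem to an optimisation over the part sizes $|V_i|=n/r+t_i$, the number $m$ of missing cross edges, and the number of inside edges, which is at least $q+m+\Theta(\sum t_i^2)$. The imbalance multiplies the per-edge count by $1-\Theta(\sum|t_i|/n)$, while it costs $\Theta(\sum t_i^2)$ additional inside edges. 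Comparing these shows the minimum is attained with all $t_i=0$ exactly when $q<(\delta-\eps)n$, which gives at least $q\,c(n,F)$ copies.

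The main obstacle is step (iii) together with this optimisation. One must control the interaction between inside edges and missing cross edges precisely enough that the error is $o(c(n,F))$ per edge. This is delicate for $K_{r+2}-e$, whose per-edge count is sensitive to local degree deficits. I would handle it with Pikhurko--Yilma style weighting, charging each missing cross edge to the inside edges at its endpoints.
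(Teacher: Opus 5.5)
The paper gives no proof of this statement; it is quoted from Pikhurko--Yilma. Your proposal therefore has to stand on its own, and it has a genuine gap.

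\textbf{The upper bound for $K_{r+2}-e$.} This is the most serious problem: you only announce it, and it fails along the route you describe. With $n=rm$ one has $c(n,F)=(r-1)\binom{m}{2}m^{r-2}$. Moving one vertex from $V_2$ to $V_1$ lowers the count through an edge of $V_1$ by $\frac r2(m-1)m^{r-2}$, so the first-order crossing is indeed at $q\approx\frac{r-1}{r}m=\frac{r-1}{r^2}n$.

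However, pairs of added edges cannot be made harmless here. A cherry $xy,xz\subseteq V_1$, together with any transversal of the other parts, spans a copy of $K_{r+2}-e$. This gives $(m-1)m^{r-2}=\Theta(c(n,F)/n)$ extra copies per cherry, the same order as the per-edge saving. Two added edges in different parts span $\Theta(m^{r-2})$ copies of $K_{r+2}$.

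For $r\ge3$, a part of size $m+1$ has no matching with more than $(m+1)/2<\frac{r-1}{r}m$ edges. So your graph, with all $q+1$ added edges in $V_1$, has at least
\[
q\,c(n,F)+(m-1)m^{r-2}\Bigl[\tfrac{r-1}{2}m-\tfrac r2(q+1)+\max\{0,\,2(q+1)-(m+1)\}\Bigr]
\]
copies. For $r\ge3$ and large $m$ the bracket is positive for every $1\le q\le m$. Hence this construction never has fewer than $q\,c(n,F)$ copies in the relevant range. For $r=2$ it does, but only in the window $q+1=(m+1)/2$ with $m$ odd, where a perfect matching of $V_1$ fits.

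Pikhurko--Yilma define their constant through the structure of extremal graphs: it is the $\limsup$ of $q/n$ for which these are $T_{n,r}$ plus $q$ edges. The value $(r-1)/r^2$ is where your unbalanced graph overtakes the Tur\'an-based one, both carrying essentially the same cherry surplus. A first-order balance of this kind therefore does not by itself bound the count threshold used here.

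\textbf{The construction for $C_{2k+1}$.} It is the right one, but two details must be fixed.
\begin{itemize}
\item $q+1>(\frac12+\eps)n$ edges cannot form a matching inside the parts. A part of size about $n/2$ holds a matching of only about $n/4$ edges, and edges in the smaller part lie in more than $c(n,C_{2k+1})$ cycles. Instead, use a bounded-degree graph in the larger part, triangle-free when $k=1$. Cycles through $j\ge3$ of its edges then number $O(n^{2k+1-j})=o(c(n,C_{2k+1}))$.
\item Restrict to even $n$. For odd $n$ the shift $K_{N+2,N-1}$ loses two cross edges, and the inequality then requires $q\gtrsim n$.
\end{itemize}

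\textbf{The lower bound.} This is only an outline, and its quantitative claims are too weak to pin down an exact constant.
\begin{itemize}
\item The choice between balanced and unbalanced is made at total scale $\Theta(c(n,F))$, i.e.\ $\Theta(c(n,F)/n)$ per inside edge. Per-edge losses of relative size $o(1)$, and an optimisation carried out with $\Theta(\cdot)$, cannot single out $\delta$.
\item A missing cross edge incident to an inside edge destroys $\Theta(n^{v(F)-3})=\Theta(c(n,F)/n)$ copies through it, not $O(n^{v(F)-4})$. This interaction sits exactly at the decisive scale.
\item Step (ii) does not close as an induction. Deleting a vertex of degree at most $(1-\frac1r-\eta)n$ raises the excess by about $\eta n$. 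This takes $q'$ outside the range $q'\le(\delta-\eps)(n-1)$ of the hypothesis, and the margin $\eps$ does not prevent it. The usual remedy is a weaker bound valid for much larger excess, such as $\texttt{\#}F\ge(1-o(1))\,q'\,c(n-1,F)$ for $q'=o(n^2)$. That suffices thanks to the gain of $\eta n$.
\end{itemize}
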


\subsection{Spectral supersaturation problems}

Let $\lambda(G)$ be the spectral radius of the adjacency matrix of $G$, and let $\texttt{\#}F (G)$ be the number of copies of $F$ in $G$. 
Spectral Tur\'{a}n theory 
replaces the edge count $e(G)$ by the
spectral radius $\lambda(G)$ and looks for analogues of classical
Tur\'an-type results; see the survey  \cite{Nik2011survey}. 
In 1986, Wilf \cite{Wil1986} showed that every $K_{r+1}$-free graph $G$ on $n$ vertices satisfies $\lambda (G) \le (1- \frac{1}{r})n$; see, e.g., \cite{Niki2009cpc, Niki2009jgt, KN2014} for related results.  
The Simonovits theorem for color-critical graphs was generalized by Nikiforov \cite{Nik2009EJC}: if $F$ is a color-critical graph with $\chi(F)=r+1$, and
$G$ is an $n$-vertex $F$-free graph with $n$ large, then
$\lambda(G)\le\lambda(T_{n,r})$, 
with equality if and only if $G=T_{n,r}$. This spectral version implies the edge version in a non-trivial way; see  \cite{LiuNing2026}. 
Spectral supersaturation asks how many copies of $F$ are forced in $G$ once
$\lambda(G)$ passes the spectral threshold $\lambda(T_{n,r})$.  
Counting substructures under spectral conditions
has attracted much attention in recent years. 
 In 2007, 
Bollob\'{a}s 
and Nikiforov \cite{BN2007jctb} proved that   
\begin{equation} \label{eq-BN}
\texttt{\#}K_{r+1}(G) \ge \Big( \frac{\lambda (G)}{n} - 1 + \frac{1}{r} \Big) 
\frac{r(r-1)}{r+1} \Big( \frac{n}{r}\Big)^{r+1}. 
\end{equation}
This bound turns the spectral radius into a lower bound on the number of
$(r+1)$-cliques, and it is exactly the inequality that
our forthcoming Theorem \ref{thm-continuous} sharpens just above the Tur\'an threshold. 
In 2023, Ning and Zhai \cite{NZ2021} proved that if $G$ is a graph on $n$ vertices with  $\lambda (G) \ge \lambda (T_{n,2})$, 
then $G$ contains at least 
$\left\lfloor \frac{n}{2}\right\rfloor -1$ triangles, 
unless $G=T_{n,2}$.   
In recent years, a number of results on spectral
supersaturation have been obtained, 
including triangles \cite{LFP2024, ZZ2026, CLT2026},  
$4$-cycles \cite{NZ2021b,LLZ2024-book-4-cycle},
 books \cite{ZL2022jgt, ZLL2026, ZYZZ2026, WLS2026},  
 color-critical graphs \cite{FLLM2025, FLZ2026, FLZ2026-tripartite, CL2026} 
and some classical bipartite graphs \cite{LLZ2026-Spec-Sido}.

\smallskip 
To find a spectral analogue of Theorem~\ref{thm-Mubayi},
one needs a candidate graph such that  $\lambda (G)$ above the candidate could force at least $q\,c(n,F)$ copies of $F$. The
natural choice is $Y_{n,r,q}$, the graph obtained from
$T_{n,r}$ by adding $q$ \textit{pairwise
disjoint edges} inside one largest part. 
Among all graphs 
built from $T_{n,r}$ by adding $q$ edges, $Y_{n,r,q}$ attains 
the smallest spectral radius; this was proved for all $r\ge2$ and
$q\le \frac{n}{100r}$ in \cite{FLLM2025}. 
Thus $\lambda(G)\ge\lambda(Y_{n,r,q})$ is the {\it weakest}\, spectral
condition under which one can hope to force $q\,c(n,F)$
copies. For triangles, Li, Feng and Peng
\cite{LFP2024} established the first result of this kind:
if $1\le q\le\frac{1}{9}\sqrt n$ and $G$ is an $n$-vertex graph
with $\lambda(G)\ge\lambda(Y_{n,2,q})$, then
$\texttt{\#}K_3(G)\ge q\lfloor n/2\rfloor$. 
For color-critical graphs, Fang, Li, Lin and Ma \cite{FLLM2025} proved a general result.

\begin{theorem}[Fang--Li--Lin--Ma \cite{FLLM2025}] \label{thm-Y}
    Let $F$ be a color-critical graph
with $\chi (F)=r+1$. There exists $\delta_F >0$ such that 
if $n$ is sufficiently large, $1\le q \le  \delta_F \sqrt{n}$, 
and $G$ is an $n$-vertex graph with
\begin{equation*} 
     \lambda (G) \ \ge  \ 
      \lambda (Y_{n,r,q}), 
\end{equation*}
then $\texttt{\#} F(G) \ge q\, c(n,F)$.
Under the above conditions, if $G$ minimizes $\texttt{\#} F(G)$, then $G\in \mathcal{T}_{n,r,q}$.
\end{theorem}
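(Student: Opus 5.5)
Since $\lambda(G)\ge\lambda(Y_{n,r,q})>\lambda(T_{n,r})$, I would argue through stability, a weighted count, and then exact optimization, following the spectral template. The constant $\delta_F$ stays free throughout and is shrunk at the end.

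\textbf{Step 1: structure.} Suppose $G$ has fewer than $q\,c(n,F)$ copies of $F$. Since $q\le\delta_F\sqrt n$, this is $o(n^{r-1+v(F)-r})$ copies, so by the graph removal lemma $G$ becomes $K_{r+1}$-free after deleting $o(n^2)$ edges. On the other hand $\lambda(G)\ge (1-\frac1r)n-O(1)$. Nikiforov's spectral stability then gives a partition $V=V_1\cup\dots\cup V_r$ with $|V_i|=n/r+o(n)$ and $o(n^2)$ non-edges across parts. The Perron vector $\mathbf{x}$ (normalized with $\max x=1$) is then almost constant, $x_v=1-o(1)$, except on a small set $L$ of vertices of low degree.

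I would first show $L=\emptyset$. Deleting $v\in L$ loses about $2x_v\lambda\|\mathbf x\|^{-2}$ in $\lambda$. A cleaning argument, comparing $\lambda(G)$ to $\lambda(Y_{n-1,r,q})$ and using the tiny gap $\lambda(Y_{n,r,q})-\lambda(T_{n,r})=\Theta(q/n)$, gives a contradiction. After that, every vertex has at most $o(n)$ neighbours in its own part. Any $e\in E(G[V_i])$, together with nearly complete cross structure, then lies in at least $(1-o(1))c(n,F)$ copies of $F$, and any missing cross edge destroys few copies.

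\textbf{Step 2: reduction to $\mathcal T_{n,r,q}$-type graphs.} Let $s=\sum_i e(G[V_i])$ and $m$ the number of missing cross edges. The Rayleigh quotient with the near-constant $\mathbf x$ gives
\[
\lambda(G)\le\lambda(T_{n,r})+\frac{2(s-m)}{n}\Big(1+O\big(\tfrac{1}{n}\big)\Big)+O\!\Big(\frac{s\,\Delta_{\rm in}}{n^2}\Big),
\]
where $\Delta_{\rm in}$ is the maximum inner degree. The second-order term is exactly where $q\le\delta_F\sqrt n$ enters. For $Y_{n,r,q}$ the inner edges form a matching, so the correction is $\Theta(q/n^2)$. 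For general inner graphs the correction is $\sum_v d_{\rm in}(v)^2/n^2\lesssim s^2/n^2$. Hence $\lambda(G)\ge\lambda(Y_{n,r,q})$ forces $s-m\ge q-O(s^2/n)$, which is $\ge q-o(1)$ when $s=O(\sqrt n)$ is small relative to $\delta_F\sqrt n$.

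\textbf{Step 3: counting.} Each inner edge supplies about $c(n,F)$ copies. Each missing cross edge kills at most $O(s\,n^{v(F)-r-2}\cdot\dots)$ copies, which is much smaller than $c(n,F)$ unless it is adjacent to many inner edges. A standard weighting, as in Mubayi and Pikhurko--Yilma, shows $\#F(G)\ge (s-m)c(n,F)-O(s^2 c(n,F)/n)$. Integrality of $s-m$, together with the refined inequality from Step 2, then yields $\#F(G)\ge q\,c(n,F)$. Equality forces $m=0$ and $s=q$, i.e.\ $G\in\mathcal T_{n,r,q}$ with parts balanced. Imbalance is handled by the fact that $c(n,F)$ is minimized on the largest parts while $\lambda$ is maximized at balance.

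\textbf{Main obstacle.} The delicate point is Step 2 combined with Step 3 when $s\gg q$. Then the inner graph may be dense (say stars), which boosts $\lambda$ by $\Theta(s^2/n^2)$ per vertex. This lets $G$ reach $\lambda(Y_{n,r,q})$ with $s-m$ slightly below $q$ while many cross edges are missing. I would handle it by bounding $\lambda$ via the eigen-equation at each vertex and comparing quadratic corrections. The $\sqrt n$ scale arises because a star of size $t$ contributes $\approx t^2/n^2$, comparable to the $1/n$ value of one edge once $t\sim\sqrt n$. Choosing $\delta_F$ small makes these corrections less than one edge's worth.
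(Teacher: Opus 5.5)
The statement itself is quoted from Fang--Li--Lin--Ma, and the paper never reproves it. For its sharpened clique and odd-cycle versions it uses the same architecture you propose: near-Tur\'an structure, then a first- plus second-order comparison with $\lambda(Y_{n,r,q})$, then counting copies per inner edge. The structural part is taken as the black box Theorem~\ref{thm:FLLM} ($h\le 2s\le 4q/\eta_F$, $|n_i-n_j|\le 4\sqrt s$; your $m$ is the paper's $h$), which is what your Step~1 tries to re-derive. There, ``comparing with $\lambda(Y_{n-1,r,q})$ gives a contradiction'' is not yet an argument: $G-v$ may still carry up to $q\,c(n,F)>q\,c(n-1,F)$ copies, so you must still turn the $\Omega(1)$ spectral surplus into many more copies.

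The genuine gap is the end game. From $s-m\ge q$ and $\texttt{\#}F(G)\ge (s-m)\,c(n,F)-O(s^2c(n,F)/n)$ you only get $\texttt{\#}F(G)\ge q\,c(n,F)-O(q^2/n)\,c(n,F)$. That loss is a constant fraction of $c(n,F)$ when $q$ is of order $\sqrt n$, and it is positive even for $q=1$. So ``integrality yields $\texttt{\#}F(G)\ge q\,c(n,F)$, and equality forces $m=0$, $s=q$'' does not follow, precisely in the case $s-m=q$ that contains all extremal graphs. That case needs a loss-free argument. First, each missing cross edge destroys only $O(\eps)\,c(n,F)$ of the copies whose only inner edge is a given one, so $m\ge1$ yields a surplus of about $m\,c(n,F)$. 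Second, when $m=0$ and $s=q$ you must show $K$ is exactly $T_{n,r}$; then each inner edge has at least $c(n,F)$ such copies by the definition of $c(n,F)$.

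This is where your treatment of imbalance fails. Your Step~2 bound is anchored at $\lambda(T_{n,r})$: it uses only $\lambda(K)\le\lambda(T_{n,r})$ and carries no penalty for an unbalanced partition. Yet in an unbalanced $K$ an inner edge can lie in strictly fewer than $c(n,F)$ copies, and ``$\lambda$ is maximized at balance'' cannot rule such graphs out. The graph of Proposition~\ref{prop:construction} has parts of sizes $n/r+1,\,n/r-1,\,n/r,\dots,n/r$ and a $q$-star in the largest part. Every star edge lies in $(n/r-1)(n/r)^{r-2}<c(n,K_{r+1})$ copies of $K_{r+1}$. By Example~\ref{exam-clique} it genuinely exceeds $\lambda(Y_{n,r,q})$ once $q\ge(\sqrt2\,\alpha+\eps)\sqrt n$. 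What excludes it for small $q$ is the quantitative competition between the imbalance cost $2\alpha D/n$ (Theorem~\ref{thm:multipartite-expansion}) and the second-order gain $(s^2+h^2)\alpha^{-1}/n^2$. Theorem~\ref{thm:local-spectral} records exactly this, and the paper's case $s=q$ uses it to force $h=D=0$; so your Step~2 must keep the term $-2\alpha D/n$.

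Finally, the ``main obstacle'' you single out ($s\gg q$) is not one. Once parts have size $n/r+o(n)$ and inner and missing degrees are $O(\eps n)$, every inner edge already has $(1-O(\eps))\,c(n,F)$ copies containing no other inner edge, so $s\le(1+O(\eps))q$ follows at once. You do need this bound $s=O(q)$, which the proposal never establishes, to make the $O(s^2/n)$ correction in Step~2 smaller than $1$.
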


The bound $q=O(\sqrt{n}\,)$  in Theorem \ref{thm-Y} is tight up to a constant factor, which reveals a
phenomenon different from the edge setting of Theorem~\ref{thm-Mubayi}. 
A spectral condition of this type is not comparable with the edge condition of Theorem \ref{thm-Mubayi}: a graph can satisfy $\lambda(G)\ge\lambda(Y_{n,r,q})$ while having fewer than $e(Y_{n,r,q})$ edges. 
Since a small number of high-degree vertices forces a large spectral radius, the spectral supersaturation statement does not follow from its edge counterpart, and this is closely related to why the range of $q$ shrinks from order $n$ to order $\sqrt n$.  

\smallskip 
Let $\delta_{\rm spec} (F)$ be the {\it largest constant} $\delta_F$  such that Theorem \ref{thm-Y} holds. 
Despite the aforementioned results, no exact value of $\delta_{\rm spec} (F)$ has been determined in the spectral
setting for any color-critical graph $F$.   
Unlike the edge setting, which comes from a simple count of edges, the spectral condition $\lambda(G)\ge\lambda(Y_{n,r,q})$ works in a different way, as adding $q$ disjoint edges increases the spectral radius by nearly $2q/n$. So the crucial comparison occurs at the second-order term of the spectral expansion of $\lambda(Y_{n,r,q})$. This also explains why the optimal range of $q$ in the spectral setting drops from about $n$ to about $\sqrt n$, and why the optimal constant is not a simple fraction. Consequently, determining the exact value of $\delta_{\rm spec}(F)$ appears more subtle and difficult, and it requires precise control of the Perron vector, instead of the rough estimates that are enough for the order
of magnitude.

\subsection{Main results}

The main contribution of this paper is to establish 
spectral supersaturation results by determining the exact thresholds $\delta_{\rm spec}(K_{r+1}) =(1-\frac{1}{r})\sqrt{2} $ and $\delta_{\rm spec}(C_{2k+1}) = \frac{1}{\sqrt{2}}$; see Table \ref{tab:edge-vs-spec}.     
Our results can be viewed as the spectral counterparts of Theorems \ref{thm-edge-clique} and \ref{thm-edge-cycle}.  
We present these results in the following precise statement. 
 For a graph $F$ with $\chi (F)=r+1$, 
 we recall that $c(n,F)$ denotes the minimum number of copies of $F$ created by adding a single edge to the Tur\'{a}n graph $T_{n,r}$.

\begin{theorem} 
\label{thm-cliques}
For every $r\ge 2$, write  
$ \delta_r :=(1-\frac{1}{r})\sqrt{2}$.  
 For every  $\varepsilon>0$, if $n$ is sufficiently large and $ 1\le q\le (\delta_r -\varepsilon)\sqrt n$, 
then every $n$-vertex graph $G$ with 
$ \lambda(G)\ge \lambda(Y_{n,r,q})$ 
satisfies 
$$ \texttt{\#}K_{r+1}(G)\ \ge \ q\,c(n,K_{r+1}),$$
 with equality  only if $G\in \mathcal{T}_{n,r,q}$. 
Moreover, the constant $\delta_r$ is best possible. 
\end{theorem}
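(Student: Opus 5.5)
The plan is to deduce the upper bound from the structural half of Theorem~\ref{thm-Y} together with a second-order perturbation expansion of $\lambda$ around $T_{n,r}$, and to prove sharpness with a star construction. Throughout, let $x$ be the Perron vector of the relevant graph.

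\emph{Step 1: structure.} Let $G$ minimize $\texttt{\#}K_{r+1}(G)$ subject to $\lambda(G)\ge\lambda(Y_{n,r,q})$. I will rerun the stability and Perron-vector argument of Fang--Li--Lin--Ma, used as the structural input behind Theorem~\ref{thm-Y}, with $q=O(\sqrt n)$ and an explicit constant. The target conclusion is that $G$ is obtained from a complete $r$-partite graph with parts $V_1,\dots,V_r$, where $\bigl||V_i|-n/r\bigr|=O(1)$, by adding a set $E$ of edges inside parts and deleting a set $M$ of cross edges. In addition, every vertex of $G$ should satisfy $x_v=(1+o(1))/\sqrt n$, except possibly for $O(1)$ vertices of large inside degree, which need separate control.

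\emph{Step 2: counting.} Each edge of $E$ lies in at least $c(n,K_{r+1})-O(n^{r-2})$ cliques. Cliques that use two edges of $E$ are only extra copies, so they only help the lower bound. Each edge of $M$ destroys only $O(n^{r-2}\cdot|E|)$ cliques that contain an edge of $E$. Hence
\[
\texttt{\#}K_{r+1}(G)\ \ge\ |E|\,c(n,K_{r+1})-O\bigl((|M|+1)\,|E|\,n^{r-2}\bigr),
\]
and it suffices to show that $|E|\ge q$ and, when $|E|=q$, that $M=\emptyset$ and the parts are balanced. Once this holds we get $G\in\mathcal T_{n,r,q}$ in the equality case.

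\emph{Step 3: spectral comparison (the main obstacle).} Write $A(G)=A(K)+A_E-A_M$, where $K$ is the complete $r$-partite graph on $V_1,\dots,V_r$. Let $x_0$ be the Perron vector of $T_{n,r}$ and $\mu=(1-\tfrac1r)n$ its top eigenvalue. I will expand $\lambda(G)$ to second order, with an error of $o(1/n)$:
\[
\lambda(G)=\lambda(K)+\frac{2(|E|-|M|)}{n}+\frac{1}{\mu n}\Bigl\|P^{\perp}(A_E-A_M)\mathbf 1\Bigr\|^2+o\!\left(\frac1n\right),
\]
where $P^{\perp}$ projects onto the orthogonal complement of the top eigenspace of $A(K)$. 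I will also use that $\lambda(K)\le\lambda(T_{n,r})$, with a deficit of order $1/n$ whenever the parts are unbalanced.

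The dominant second-order quantity is $\sum_v d_E(v)^2/(\mu n^2)$. For $Y_{n,r,q}$ it equals $2q/(\mu n^2)=o(1/n)$. Now suppose $|E|\le q-1$. Then
\[
\sum_v d_E(v)^2\ \le\ 2|E|^2\ \le\ 2(\delta_r-\eps)^2 n .
\]
The missing first-order amount $2/n$ would require this sum, after the correction from $P^{\perp}$ and from the $r$-partite eigenstructure, to exceed $2\mu n/n=2(1-\tfrac1r)n$ up to the constant that the exact computation produces. The step I expect to be hardest is pinning down this constant exactly. I will compute the second-order term for a star inside a part, which is the worst case by convexity of $\sum d^2$. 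The difficulty is to show that the resulting threshold is precisely $q^2=\delta_r^2 n$, and to absorb all error terms, including Perron-vector fluctuations at high-degree vertices, into the slack $\eps$. The same expansion, together with the imbalance deficit, rules out $M\neq\emptyset$ or unbalanced parts when $|E|=q$.

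\emph{Step 4: sharpness.} Take $q\ge(\delta_r+\eps)\sqrt n$ and let $G$ be $T_{n,r}$ plus a star with $q-1$ edges placed inside a largest part. This graph has exactly $(q-1)\,c(n,K_{r+1})$ copies of $K_{r+1}$, since no clique of $G$ uses two star edges. By the second-order expansion of Step 3, its second-order gain $\approx (q-1)^2/(\mu n^2)\cdot(\text{computed constant})$ exceeds $2/n$, so $\lambda(G)>\lambda(Y_{n,r,q})$. This shows that $\delta_r$ cannot be increased.
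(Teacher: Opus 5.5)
There is a genuine gap: you have identified the wrong extremal mechanism, so both your sharpness example and your account of where the constant comes from are off. Write $\alpha=1-\frac1r$. Dropping one class-edge costs $2/n$ at first order. A $p$-edge star gains only $p^2\alpha^{-1}/n^2$ at second order. This uses $\sum_v d_E(v)^2\le |E|^2+|E|$; your bound $2|E|^2$ is too weak for $r\ge3$. The computation you propose for $|E|\le q-1$ therefore balances at $q^2\approx 2\alpha n$, i.e.\ $q\approx\sqrt{2\alpha}\,\sqrt n$. This is strictly larger than $\delta_r\sqrt n=\sqrt2\,\alpha\sqrt n$ because $\alpha<1$, so no constant you compute there will equal $\delta_r$.

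In particular Step~4 fails. For fixed $c$ with $\delta_r<c<\sqrt{2\alpha}$ and $q=c\sqrt n$, Theorem~\ref{thm:local-spectral} gives that $T_{n,r}$ plus a $(q-1)$-edge star has $\lambda\le\lambda(T_{n,r})+\frac{2(q-1)}{n}+\frac{(q-1)^2\alpha^{-1}}{n^2}+o(n^{-1})$. This is below $\lambda(Y_{n,r,q})=\lambda(T_{n,r})+\frac{2q}{n}+O(n^{-3/2})$, so your graph is not a counterexample.

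The cheaper way to lose cliques is to keep all $q$ class-edges and unbalance the partition by one vertex. This makes $D=e(T_{n,r})-e(K)=1$ and costs only $2\alpha D/n=2\alpha/n$ (Theorem~\ref{thm:multipartite-expansion}). The paper's example is $K_r(m+1,m-1,m,\dots,m)$ with a $q$-edge star in the part of size $m+1$. It has $q(m-1)m^{r-2}<q\,c(n,K_{r+1})$ copies. A test vector $\bm u+t\bm y$, with $\bm y$ the centered star-degree vector on that part (so $A(K_0)\bm y=0$ and $\bm y\perp\bm u$), gives $\lambda\ge\alpha n-\frac{2\alpha}{n}+\frac{2q}{n}+\frac{q^2\alpha^{-1}}{n^2}+O(n^{-3/2})$. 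This beats $\lambda(Y_{n,r,q})$ exactly when $q^2>2\alpha^2n=\delta_r^2n$.

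The same misidentification affects your upper bound. The case that genuinely needs $q<\delta_r\sqrt n$ is $|E|=q$ with $K\neq T_{n,r}$, which you dismiss with ``a deficit of order $1/n$''. You need the exact expansion $\lambda(K)=\lambda(T_{n,r})-2\alpha D/n+O(n^{-5/4})$ with $D\ge1$ an integer. Then $-2\alpha/n+q^2\alpha^{-1}/n^2<0$ holds precisely when $q<\sqrt2\,\alpha\sqrt n$.

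Two further pieces are missing. First, the case $|E|\ge q+1$ is not handled, and your reduction ``it suffices to show $|E|\ge q$'' is false. With $|M|$ or the imbalance of order $\sqrt n$, the loss $O\bigl((|M|+1)|E|n^{r-2}\bigr)$ is comparable to $c(n,K_{r+1})$. The paper uses the counting bound to force $\sqrt D+h+1\ge\kappa\,\ell n/s$, where $\ell=s-q$. The resulting spectral loss, of order $n^{-1/2}$, then beats the gain $2\ell/n$. Second, the stability theorem gives only $|n_i-n_j|\le4\sqrt s=O(n^{1/4})$, not $O(1)$, so your counting bound needs an imbalance term (the paper's $\sqrt D$).

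Conversely, you do not need Perron-vector control of $G$ at high-degree vertices. It suffices to bound $\bm x^TA(G)\bm x$ for $\bm x=a\bm u+\bm y$, where $\bm u$ is the Perron vector of $K$ and $\bm y^TA(K)\bm y\le0$.
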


We next turn to another typical family of color-critical graphs, the odd cycle $C_{2k+1}$ for all $k\ge 1$.
For this family of graphs, the threshold is an absolute constant  $1/\sqrt2$, which is independent of the length
$2k+1$, in contrast to the clique threshold $\delta_r$ above, which depends on $r$.

\begin{theorem}\label{thm-odd-cycles}
For every $k\ge 1$ and $\eps>0$, write $\delta_{\rm opt} := 1/\sqrt{2}$. If $n$ is sufficiently large and $1\le q\le (\delta_{\rm opt}-\eps)\sqrt n$, then every $n$-vertex graph  $G$ with $\lambda(G)\ge \lambda(Y_{n,2,q})$ satisfies 
$$ \texttt{\#}\, {C_{2k+1}}(G)\ \ge \ q\,c(n,C_{2k+1}),$$
 with equality only if $G\in \mathcal{T}_{n,2,q}$. Moreover, the constant $\delta_{\rm opt}$ is best possible. 
\end{theorem}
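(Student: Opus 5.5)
The plan is to prove the two halves of Theorem~\ref{thm-odd-cycles} separately. The upper bound reduces, via the structural result of Fang, Li, Lin and Ma, to a second-order spectral comparison among near-Tur\'an graphs. Sharpness comes from an explicit competitor that is unbalanced and whose added edges are concentrated.

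\emph{Reduction.} First I would invoke the structural part of the argument behind Theorem~\ref{thm-Y} from \cite{FLLM2025}. I expect it to show that a graph $G$ with $\lambda(G)\ge\lambda(Y_{n,2,q})$, $q=O(\sqrt n)$, minimizing $\texttt{\#}C_{2k+1}(G)$ has the following form: a complete bipartite graph $K_{a,b}$ with $a=\lceil n/2\rceil+t$ and $b=\lfloor n/2\rfloor-t$, $t\ge 0$ bounded, possibly minus a few cross edges, plus a small graph $H$ of $m$ edges inside the parts. The deleted cross edges only hurt the spectral radius more than they save cycles, so I would first show they can be assumed absent. Every edge of $H$ inside the part of size $a$ creates about $b^{2k-1}=(n/2-t)^{2k-1}(1+o(1))$ copies of $C_{2k+1}$. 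So if $t=0$ we are done as soon as $m\ge q$. The real content is to exclude $t\ge1$ with $m$ small, i.e.\ to show $m\ge q+1$ whenever $t\ge 1$. Indeed $(q+1)(n/2-t)^{2k-1}\ge q\,c(n,C_{2k+1})$ in the range $q=O(\sqrt n)$ with $t$ bounded.

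\emph{Second-order spectral expansion.} The key computation is a precise expansion of $\lambda(K_{a,b}+H)$ to accuracy $o(1/n)$. Write the Perron vector as constant on each side plus a correction concentrated on $V(H)$. Expanding via $\lambda x=Ax$ and one step of perturbation theory should give
\[
\lambda(K_{a,b}+H)=\sqrt{ab}+\frac{2m}{n}\Bigl(1-\frac{2t}{n}\Bigr)+\frac{c_1}{n^2}\sum_{v}d_H(v)^2+O\!\Bigl(\frac{m}{n^2}\Bigr)+o\!\Bigl(\frac1n\Bigr),
\]
with an explicit constant $c_1$ (I expect $c_1=2$ up to normalization), and $\sqrt{ab}=n/2-t^2/n+O(n^{-3})$. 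For $Y_{n,2,q}$ one has $\sum d_H^2=2q$, a negligible term, so $\lambda(Y_{n,2,q})=n/2+2q/n+o(1/n)$. For fixed $m$, $\sum d_H(v)^2$ is maximized by a star, where it is about $m^2$; I would prove this inequality (cheap, since $\sum d^2\le m^2+m$ for any $m$-edge graph). Then for $t\ge1$ and $m\le q$ we get $\lambda(G)\le n/2-1/n+2q/n+c_1q^2/n^2+o(1/n)$. This is below $\lambda(Y_{n,2,q})$ exactly when $c_1q^2<n$, i.e.\ when $q<(\delta_{\rm opt}-\varepsilon)\sqrt n$ if $c_1=2$. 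The $\varepsilon$ absorbs the error terms, and the equality case forces $t=0$ and $m=q$, giving $G\in\mathcal{T}_{n,2,q}$.

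\emph{Sharpness and main obstacle.} For optimality, take $q\ge(\delta_{\rm opt}+\varepsilon)\sqrt n$ and $G=K_{\lceil n/2\rceil+1,\lfloor n/2\rfloor-1}$ plus a star with $q$ edges in the larger part. The same expansion gives $\lambda(G)>\lambda(Y_{n,2,q})$. A star creates no odd cycle by itself inside a part, so $G$ has only about $q(n/2-1)^{2k-1}<q\,c(n,C_{2k+1})$ copies of $C_{2k+1}$. The main obstacle is the expansion itself: it needs genuinely second-order control of the Perron vector, uniform over all $H$ with $m=O(\sqrt n)$ edges, with errors $o(1/n)$. I would first try the quadratic-form route, bounding $\lambda\le x^{T}Ax$ with $x$ the exact eigenvector and writing $x_v=x_{\text{side}}(1+d_H(v)/n+\eta_v)$ with $\sum\eta_v^2$ controlled, then checking that cross terms between distinct high-degree vertices of $H$ contribute only $O(m/n^2)$.
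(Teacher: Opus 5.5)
Your architecture matches the paper's: stability via Theorem~\ref{thm:FLLM}, a second-order upper bound whose quadratic term is $\alpha^{-1}\sum_v d(v)^2/n^2=2\sum_v d(v)^2/n^2\le 2(m^2+m)/n^2$, comparison with $\lambda(Y_{n,2,q})=\lambda(T_{n,2})+2q/n+O(n^{-3/2})$, a per-class-edge count, and $K_{N+1,N-1}$ plus a star for sharpness. Your threshold computation ($2q^2<n$ against the $-1/n$ imbalance loss) is also right.

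\textbf{Gap 1: deleted cross-edges.} You propose to ``first show they can be assumed absent'', but no monotone reduction does this. Restoring a deleted cross-edge raises both $\lambda$ and $\texttt{\#}C_{2k+1}$, so a bound for the repaired graph says nothing about $G$. Moreover, your expansion has no $h$-terms, so the claimed trade-off is never quantified. The paper carries $h$ throughout. The spectral bound contains $-h/N+h^2/(2N^2)$ (Lemma~\ref{lem:local-spectrum-odd}), and the count loses $C_k's(a+h+1)N^{2k-2}$ (Lemma~\ref{lem:local-counting-odd}). If $s=q$ and $h\ge1$, the deletions cost at least $1/(2N)$, which beats $q^2/(2N^2)$. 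If $s=q+\ell$ with $\ell\ge1$, then $\texttt{\#}C_{2k+1}(G)\le q\,c(n,C_{2k+1})$ forces $a+h\ge c_k\ell N/s-1$. That costs about $c_k\ell/(2s)=\Omega(\ell/\sqrt N)$ in $\lambda$, which beats the gain $\ell/N$. This combined counting--spectral step is exactly what your plan skips. You also never exclude $t=0$, $m\le q-1$; this follows from $-2/n+2q^2/n^2<0$.

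\textbf{Gap 2: the spectral estimate.} You flag it as the main obstacle but do not prove it. Your sketched route puts all the difficulty into a priori control of $\eta_v$ for the unknown eigenvector of $G$, uniformly over $H$. (The first-order coefficient in your ansatz should be $d_H(v)/\lambda\approx 2d_H(v)/n$, not $d_H(v)/n$.) The paper never touches the eigenvector of $G$. Since $K_{|A|,|B|}$ has only one positive eigenvalue, any unit $\bm x=a\bm u+\bm y$ with $\bm y\perp\bm u$ satisfies $\bm y^TA(K)\bm y\le 0$. Optimizing over $\|\bm y\|$ then gives $\lambda(G)\le\mu+\bm u^TE\bm u+\|E\bm u\|^2/(\mu+\bm u^TE\bm u-\|E\|)$, where $E$ is the signed perturbation covering added and deleted edges together; this is all of Theorem~\ref{thm:local-spectral}. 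Only this upper bound is needed for the main statement, so a two-sided expansion uniform in $H$ is unnecessary. Sharpness needs only a lower bound for the single star graph. The paper gets it from the test vector $\bm u+t\bm y$, with $\bm y$ the normalized centered star-degree function on the big part, which is orthogonal to $\bm u$ and killed by $A(K_0)$.

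\textbf{Smaller points.}
\begin{itemize}
\item Theorem~\ref{thm:FLLM} gives $t=O(n^{1/4})$, not bounded. This is harmless, since $qt=o(n)$.
\item The per-edge count is $(|Z|)_k(|W|-2)_{k-1}$, not $b^{2k-1}$.
\item In the sharpness example ``about'' is too coarse, because the deficit is only a factor $(N-1)/N$. You need the exact counts $(N-1)_k(N-1)_{k-1}$ versus $(N)_k(N-2)_{k-1}$.
\item For odd $n$, expand around $\lambda(T_{n,2})=\sqrt{N(N+1)}$ rather than $n/2$. The two differ by about $1/(4n)$, which is on the critical scale.
\end{itemize}
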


Up to now, we summarize the spectral supersaturation results in Table \ref{tab:edge-vs-spec}. 

\begin{table}[htbp]
\centering 
\renewcommand{\arraystretch}{1.3}
\begin{tabular}{@{}ccc@{}}
\toprule
  & Edge setting
  & Spectral setting \\
\midrule
Assumption 
  & $e(G)\ \ge \  e(T_{n,r})+q$
  & $\lambda(G) \  \ge \  \lambda(Y_{n,r,q})$ \\
Range of $q$
  & $q\le \delta_F\, n$
    (Mubayi \cite{Mubayi2010})
  & $q\le \delta_F \sqrt{n}$
    (Fang--Li--Lin--Ma \cite{FLLM2025}) \\
Conclusion
  & $\texttt{\#}F(G) \ \ge \  q\,c(n,F)$
  & $\texttt{\#}F(G) \ \ge  \ q\,c(n,F)$ \\
\hline
Threshold for $K_{r+1}$
  & $\tfrac{1}{r}$
    (Lov\'asz--Simonovits \cite{LS1975})
  & $\bigl(1-\tfrac{1}{r}\bigr)\sqrt{2}$
    (Theorem~\ref{thm-cliques}) \\[2pt]
    \hline 
Threshold for $C_{2k+1}$
  & $\tfrac{1}{2}$
    (Pikhurko--Yilma \cite{PY2017})
  & $\tfrac{1}{\sqrt{2}}$
    (Theorem~\ref{thm-odd-cycles}) \\
\bottomrule
\end{tabular}
\caption{Supersaturation in the edge setting versus the spectral setting.}
\label{tab:edge-vs-spec}
\end{table}

In contrast with the edge setting, 
Theorems \ref{thm-cliques} and \ref{thm-odd-cycles} differ from the edge setting in two ways: the scale of 
$q$ and the value of the threshold. 
First, the conclusion of Theorem \ref{thm-Mubayi} holds for all $q\le \delta_F n$, of order $n$, while the range in the spectral setting is of order $\sqrt n$. Second, in Theorems \ref{thm-edge-clique} and \ref{thm-edge-cycle}, we see that 
$\delta_{\mathrm{edge}}(K_{r+1})=1/r$ and
$\delta_{\mathrm{edge}}(C_{2k+1})=1/2$, whereas our
spectral thresholds are $\delta_{\mathrm{spec}}(K_{r+1}) = (1- \frac{1}{r})\sqrt2$ and $\delta_{\mathrm{spec}}(C_{2k+1}) = 1/\sqrt2$.
So the spectral threshold must be determined by a  different approach. 
The two settings behave in opposite directions as $r$ grows: the edge threshold
$1/r$ decreases to $0$, while the spectral threshold
$(1- \frac{1}{r})\sqrt2$ increases to $\sqrt2$.  
 In the edge setting, 
$\delta_{\mathrm{edge}}(F)$ depends on the fine structure of $F$, not only on
its chromatic number: it equals $1/r$ for $K_{r+1}$,  but $(r-1)/r^2$ for 
$K_{r+2}-e$. We speculate that the spectral threshold  $\delta_{\mathrm{spec}}(F)$ may depend only on the chromatic number of $F$, if true, this would be a new 
feature with no edge analogue. 

\smallskip 
The scale $\sqrt n$ has a transparent spectral
meaning. Adding $q$ edges to Tur\'{a}n's graph $T_{n,r}$ raises the spectral radius by ${2q}/{n}$ at first order,
regardless of how the added edges are distributed. 
{\it The distribution becomes visible only at second order term}, where a star gains an extra
${q^2 (1- \frac{1}{r})^{-1}}/{n^2}$, while an unbalanced
pair of partite sets loses ${2 (1- \frac{1}{r})}/{n}$. These two effects balance when $q$ has order $\sqrt n$, and
the constant $(1-\tfrac1r)\sqrt2$ records the
point of balance. Thus, any first-order estimate is too coarse to detect the threshold of $q$, which is determined 
at the second-order estimate.

\smallskip 
The supersaturation result in Theorem \ref{thm-Y} compares $G$ with the candidate graph $Y_{n,r,q}$ and
lets $q$ range up to $O(\sqrt{n}\,)$.  
A more refined goal of this paper is to bound
$\texttt{\#}F(G)$ directly by the increment 
$\lambda(G)-\lambda(T_{n,r})$. Adding $q$ disjoint edges raises the spectral radius 
by about $2q/n$, so the range
$q=O(\sqrt n\,)$ corresponds to an increment of order
${n}^{-1/2}$. As an application of our approach, we establish the following continuous version of Theorem \ref{thm-Y}.

\begin{theorem} 
\label{thm-continuous}
    For any color-critical graph $F$ with $\chi (F)=r+1$, there exists $\delta_F >0$ such that for all sufficiently large $n$, and all real $C$ with $0<C\le \delta_F {n}^{-1/2}$, if $G$ is an $n$-vertex graph with 
    $$ \lambda (G)\ge \lambda (T_{n,r}) + C, $$  then 
    \begin{equation*}  
    \texttt{\#}F(G) \ge 
    \Big(\frac{C}{2}- o(1) \Big) 
     n \cdot c(n,F).
    \end{equation*}
    Moreover, this bound is tight, as witnessed by the graph $Y_{n,r,q}$ for all small $q$. 
\end{theorem}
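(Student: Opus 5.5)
We prove \cref{thm-continuous} by reduction to \cref{thm-Y}, using a sandwich of $\lambda(T_{n,r})+C$ between $\lambda(Y_{n,r,q})$ for consecutive integers $q$. Set $\delta_F$ (for the continuous statement) to be a small multiple of the constant $\delta'_F$ from \cref{thm-Y}.

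\textbf{Step 1: a first-order expansion of $\lambda(Y_{n,r,q})$.} We need
\[
\lambda(Y_{n,r,q})=\lambda(T_{n,r})+\frac{2q}{n}+O\!\Big(\frac{q^2+q}{n^2}\Big),\qquad 1\le q\le \delta'_F\sqrt n .
\]
\begin{itemize}
\item \emph{Lower bound.} Use the Rayleigh quotient with the Perron vector $\mathbf x$ of $T_{n,r}$. For $r\nmid n$, use the equitable-partition Perron vector. Each added edge lies inside a largest part, where entries equal $x_{\max}=(1+O(1/n))/\sqrt n$, so $\lambda(Y_{n,r,q})\ge \lambda(T_{n,r})+2q\,x_{\max}^2\ge\lambda(T_{n,r})+2q/n-O(q/n^2)$.
\item \emph{Upper bound.} The graph $Y_{n,r,q}$ has an equitable partition: the matched vertices of the big part, the unmatched ones, and the other parts. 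Its spectral radius is therefore the largest root of a small explicit polynomial, namely the quotient matrix, and perturbation of this root gives $2q/n+O(q/n^2)$.
\item Either way the error term is $o(q/n)$ uniformly in this range, which is all we need.
\end{itemize}

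\textbf{Step 2: sandwich and apply \cref{thm-Y}.} Given $0<C\le\delta_F n^{-1/2}$, let $q\ge 0$ be the largest integer with $\lambda(Y_{n,r,q})\le\lambda(T_{n,r})+C$, with $Y_{n,r,0}=T_{n,r}$. By Step 1, $\lambda(Y_{n,r,q+1})-\lambda(Y_{n,r,q})=2/n+o(1/n)$, so $q=Cn/2-O(1)-o(Cn)$. Also $q\le \delta'_F\sqrt n$ once $\delta_F<2\delta'_F$. There are two cases.
\begin{itemize}
\item If $q\ge1$, then $\lambda(G)\ge\lambda(Y_{n,r,q})$, and \cref{thm-Y} gives $\#F(G)\ge q\,c(n,F)$.
\item If $q=0$, then $G\ne T_{n,r}$ and $\lambda(G)>\lambda(T_{n,r})$. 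Nikiforov's theorem gives at least one copy of $F$; the claimed bound $(C/2-o(1))n\,c(n,F)$ is then only nontrivial when $Cn\gg 1$, which forces $q\ge1$.
\end{itemize}
So in every case $\#F(G)\ge (Cn/2-O(1))\,c(n,F)$.

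\textbf{The main obstacle.} To write this as $(C/2-o(1))\,n\,c(n,F)$ we need the $O(1)$ loss to be $o(n)$ after dividing by $n$. That holds, since $O(1)\cdot c(n,F)=o(1)\cdot n\,c(n,F)$. Interpreting the $o(1)$ as absolute, the inequality follows. The delicate point is really Step 1's uniformity: the error term must stay $o(1/n)$ per added edge throughout $q\le\delta'_F\sqrt n$. For $q$ of order $\sqrt n$ the second-order term $q^2/n^2$ is of order $1/n$, so it is comparable to a single step $2/n$ rather than negligible. However, it only shifts $q$ by $O(1)$, which $o(n)$ absorbs, since $q^2/n^2\cdot n=O(1)$.

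\textbf{Tightness.} For fixed small $q$, take $G=Y_{n,r,q}$ and $C=\lambda(Y_{n,r,q})-\lambda(T_{n,r})=2q/n+o(1/n)$. Each added edge lies in its own set of copies of $F$, and only $o(c(n,F))$ copies of $F$ use several added edges, so $\#F(Y_{n,r,q})=q\,c(n,F)(1+o(1))$. This equals $(C/2+o(1))\,n\,c(n,F)$, so the constant $1/2$ cannot be improved.
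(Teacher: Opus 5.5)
Your proposal is correct and is essentially the paper's argument: choose $q = Cn/2 - O(1)$ so that an upper expansion $\lambda(Y_{n,r,q}) \le \lambda(T_{n,r}) + 2q/n + O(q^2/n^2)$ gives $\lambda(Y_{n,r,q}) \le \lambda(T_{n,r}) + C \le \lambda(G)$, apply Theorem~\ref{thm-Y} to get at least $(Cn/2 - O(1))\,c(n,F)$ copies of $F$, and note that the case $q<1$ is trivial; the paper obtains the expansion from Theorem~\ref{thm:local-spectral} with $s=q$, $h=D=0$. The differences are cosmetic: you take $q$ maximal rather than $\lfloor Cn/2\rfloor - K$, and since only the upper bound at $q+1$ is used, two things can be dropped: your per-step claim $2/n+o(1/n)$, which does not follow from the displayed $O((q^2+q)/n^2)$ error, and the label $x_{\max}$, since the entries in a largest part are actually the smallest Perron entries.
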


\begin{remark}
Taking $F=K_{r+1}$, Theorem~\ref{thm-continuous} improves the Bollob\'as--Nikiforov bound (\ref{eq-BN}) in a small range above $\lambda (T_{n,r})$. The latter states that if 
$\lambda(G) = \lambda(T_{n,r})+\beta n$ for some $\beta>0$,
then
\[
    \texttt{\#}K_{r+1}(G)
    \ge\beta\,\frac{r(r-1)}{r+1}\Bigl(\frac nr\Bigr)^{r+1}
    =\beta n\, \frac{r-1}{r(r+1)}\cdot n\Bigl(\frac nr\Bigr)^{r-1}.
\]
Both bounds are linear in the spectral increment 
$\lambda(G)-\lambda(T_{n,r})$. Take $\beta \le \delta n^{-3/2}$
with a constant $0<\delta\le\delta_F$, so that the spectral increment is at most $\delta n^{-1/2}$ and Theorem~\ref{thm-continuous}
applies. By definition, we have $c(n,K_{r+1}) = (1+o(1))(\frac{n}{r})^{r-1}$. 
Consequently, our bound in Theorem \ref{thm-continuous} implies   
\[
    \texttt{\#}K_{r+1}(G)
    \ge\Bigl(\frac{\delta}{2}-o(1)\Bigr)
        {\sqrt n}\,\Bigl(\frac nr\Bigr)^{r-1}. 
\]
In contrast, the Bollob\'as--Nikiforov bound gives 
\[  \texttt{\#}K_{r+1}(G)
    \ge\frac{(r-1)\delta}{r(r+1)}
   {\sqrt n}\,\Bigl(\frac nr\Bigr)^{r-1}.  \]
   The former exceeds the latter by a factor
$\frac{r(r+1)}{2(r-1)}$, which is 
linearly large in terms of $r$. 
In particular, in the range $0 < \lambda (G) - \frac{n}{2} \le \delta n^{-1/2}$ for some $\delta >0$, we get  
$\texttt{\#}K_3(G) \ge \big(\frac{1}{4} - o(1) \big) n^2 \big(\lambda (G) - \frac{n}{2} \big)$. 
\end{remark}

Counting cliques above the classical Tur\'an threshold has a long history in the edge setting, from Erd\H{o}s \cite{Erd1962a, Erdos1964},  Lov\'asz--Simonovits \cite{LS1975} and Razborov \cite{Raz2008} to 
Liu--Pikhurko--Staden \cite{LPS2020} for triangles, Nikiforov \cite{Nik2011} for $K_4$, and Reiher \cite{Rei2016}
for all cliques. Theorem \ref{thm-continuous} can be viewed as a spectral counterpart in a narrow range just above $\lambda(T_{n,r})$, where the number of cliques grows linearly in the spectral increment
$\lambda(G)-\lambda(T_{n,r})$. 
For a general graph $F$, one expects to count the number of copies of $F$ in any graph $G$ with $\lambda (G) \ge \lambda (T_{n,r}) + C$ for any universal constant $C>0$.

\subsection{Proof overview and organization}

Throughout the paper, we assume that $r$ is fixed and $n$ is sufficiently large,  and all constants hidden in 
$O_r(\cdot)$ may depend on $r$, but not on $n$.    
Our proofs of Theorems \ref{thm-cliques} and \ref{thm-odd-cycles} are based on the spectral supersaturation-stability method. First, we reduce any extremal graph to one that is close to a nearly balanced complete $r$-partite graph. 
After the stability reduction, the graph differs from a nearly balanced complete
$r$-partite graph by only $O(\sqrt n\,)$ edge changes. Next, a key ingredient is the following second-order estimate on the spectral radius.

\begin{theorem}[The second-order estimate]
\label{thm:local-spectral}
For any $r\ge 2$, write $\alpha:=\frac{r-1}{r}$, let $K=K_r(n_1,\dots,n_r)$ be an
$n$-vertex complete $r$-partite graph, and let $G$ be obtained from $K$ by adding
$s$ class-edges and deleting $h$ cross-edges, where $s,h=O_r(\sqrt n)$. Assume
$D:=e(T_{n,r})-e(K)=O_r(\sqrt n)$. Then
\begin{equation*}
        \lambda(G)\le \lambda(T_{n,r})-\frac{2\alpha D}{n}+\frac{2(s-h)}{n}
        +\frac{(s^2+h^2)\alpha^{-1}}{n^2}+O_r(n^{-5/4}). 
\end{equation*}
\end{theorem}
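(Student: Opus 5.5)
The plan is a Rayleigh-quotient argument around the Perron vector of $K$, where the second-order gain comes from the spectral gap of $K$. Write $A_G=A_K+P$, where $P=A(E^+)-A(E^-)$, with $E^+$ the $s$ added class-edges and $E^-$ the $h$ deleted cross-edges.

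\emph{Step 1: the complete $r$-partite graph.} Put $n_i=n/r+d_i$ with $\sum_i d_i=0$. Then $\sum_i n_i^2=n^2/r+2D$, so $D=\tfrac12\sum_i d_i^2=O_r(\sqrt n)$ and each $d_i=O_r(n^{1/4})$. The spectral radius $\lambda(K)$ is the largest root of $\sum_i n_i/(\lambda+n_i)=1$. Expanding this equation to second order in the $d_i$ around $\lambda(T_{n,r})$ should give
\[
\lambda(K)=\lambda(T_{n,r})-\frac{2\alpha D}{n}+O_r(D^2/n^2)=\lambda(T_{n,r})-\frac{2\alpha D}{n}+O_r(n^{-1}).
\]
The error $O_r(n^{-1})$ is too weak for the claimed bound, so the expansion has to be carried more carefully. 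I expect $\lambda(K)-\bigl(\lambda(T_{n,r})-2\alpha D/n\bigr)$ to be of order $\sum_i|d_i|^3/n^2=O_r(n^{-5/4})$, since each $|d_i|=O_r(n^{1/4})$ and $\sum_i d_i^2=O_r(\sqrt n)$. In the same computation I will record that the unit Perron vector $z$ of $K$ is constant on each class, with $z_u=n^{-1/2}\bigl(1+O_r(|d_i|/n)\bigr)$ for $u$ in class $i$.

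\emph{Step 2: the spectral gap of $K$.} Every eigenvalue of $A_K$ other than $\lambda(K)$ lies in $[-\max_i n_i,\,0]$. Let $x$ be the unit Perron vector of $G$ and decompose $x=cz+y$ with $y\perp z$ and $c^2+\|y\|^2=1$. Then
\[
x^{T}A_Kx\le \lambda(K)c^2=\lambda(K)-\lambda(K)\|y\|^2 .
\]
This gap term, of size about $\alpha n\|y\|^2$, is what halves the naive second-order term.

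\emph{Step 3: optimizing over $y$.} Expand $x^TPx=c^2z^TPz+2c\,z^TPy+y^TPy$.

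For the first piece, Step 1 gives $z^TPz=\frac{2(s-h)}{n}+O_r\bigl((s+h)\,n^{1/4}/n^2\bigr)=\frac{2(s-h)}{n}+O_r(n^{-5/4})$.

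For the last piece, $|y^TPy|\le \|P\|\,\|y\|^2\le \sqrt{2(s+h)}\,\|y\|^2=O_r(n^{1/4})\|y\|^2$. This is negligible against $\alpha n\|y\|^2$ and only perturbs the constant at relative order $n^{-3/4}$.

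For the linear piece, let $g_u=\deg_{E^+}(u)-\deg_{E^-}(u)$. Then $Pz\approx n^{-1/2}g$, so $2c\,z^TPy\le 2n^{-1/2}\langle g,y\rangle+(\text{small})$. Maximizing the quadratic $-\alpha n\|y\|^2+2n^{-1/2}\langle g,y\rangle$ over $y$ gives at most $\|g\|^2/(\alpha n^2)$. Because $E^+$ and $E^-$ are edge-disjoint,
\[
g_u^2\le \deg_{E^+}(u)^2+\deg_{E^-}(u)^2 .
\]
Combining this with the standard bound $\sum_u \deg_H(u)^2\le e(H)\bigl(e(H)+1\bigr)$ for any graph $H$ gives $\|g\|^2\le s^2+h^2+s+h$. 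The extra $(s+h)/n^2$ is $O_r(n^{-3/2})$.

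Collecting terms, with $c^2\le1$ and $c^2z^TPz=z^TPz-\|y\|^2 z^TPz$, where the correction is absorbed into the gap term, we get
\[
\lambda(G)=x^TA_Kx+x^TPx\le \lambda(K)+\frac{2(s-h)}{n}+\frac{(s^2+h^2)\alpha^{-1}}{n^2}+O_r(n^{-5/4}).
\]
Substituting Step 1 gives the bound claimed in Theorem~\ref{thm:local-spectral}.

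\emph{Main obstacle.} The delicate point is keeping every error at $O_r(n^{-5/4})$. The two places where this is tight are the cubic remainder in the expansion of $\lambda(K)$ and the non-uniformity of $z$ across the unbalanced classes, each contributing $(s+h)n^{1/4}/n^2$-type terms. For the first of these I would solve the secular equation perturbatively around $\lambda_0=\lambda(T_{n,r})$ by writing $\lambda=\lambda_0+\mu$ and Taylor-expanding each summand $n_i/(\lambda+n_i)$ to third order.
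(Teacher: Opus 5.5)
Your proposal is correct and is essentially the paper's proof: you split a unit vector as $cz+y$ against the Perron vector of $K$, use that $A(K)$ has no other positive eigenvalue, and bound the cross term by Cauchy--Schwarz, optimizing in $\|y\|$ to get a second-order term $\|Pz\|^2/(\alpha n)$. You then control $\|Pz\|^2$ by the degree-square bound $\sum_u \deg_H(u)^2\le e(H)(e(H)+1)$ and insert the secular-equation expansion $\lambda(K)=\lambda(T_{n,r})-2\alpha D/n+O_r(n^{-5/4})$, which the paper proves as a separate theorem.

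There are two harmless slips. First, in general $\sum_i n_i^2=n^2/r+2D+B_T$, where $B_T=\sum_i(t_i-n/r)^2=O_r(1)$ and $t_i$ are the part sizes of $T_{n,r}$; it equals $n^2/r+2D$ only when $r\mid n$. Second, $g_u^2\le\deg_{E^+}(u)^2+\deg_{E^-}(u)^2$ holds simply because both degrees are nonnegative, not because $E^+$ and $E^-$ are edge-disjoint.
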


This second-order estimate controls the spectral radius of a complete $r$-partite graph after $O(\sqrt n)$ class-edge additions 
and cross-edge deletions, 
which appears in many
spectral stability arguments and may be useful for other spectral extremal problems where first-order estimates are too coarse.

\paragraph{Proof overview.}
For Theorem \ref{thm-cliques}, we apply a structural result of Fang, Li, Lin and Ma (Theorem \ref{thm:FLLM}), which transforms any extremal graph satisfying $\lambda (G)\ge \lambda (Y_{n,r,q})$ and 
$\texttt{\#}K_{r+1}(G) \le q\, c( n,K_{r+1})$ into a graph that is built from a nearly balanced complete $r$-partite graph $K$ by adding $s$ class-edges and deleting $h$ cross-edges, where $s, h= O_r(\sqrt n\,)$ and $D:=e(T_{n,r})-e(K)=O_r(\sqrt n\,)$ (Lemma \ref{lem:reduction}). For such graphs, we then prove two precise estimates. 
The first is a
second-order expansion of $\lambda (G)$ as stated in Theorem \ref{thm:local-spectral}, 
which we get by studying the Perron vector of $K$ up to second order. Comparing with $\lambda(Y_{n,r,q})=\lambda(T_{n,r})+2q/n+O_r(n^{-3/2})$ (Lemma \ref{lem:Y-expansion}) gives the key inequality:  
\begin{equation} \label{eq-overview}
        0\le \lambda(G)-\lambda(Y_{n,r,q})
        \le \frac{2(s-q-h)}{n}-\frac{2\alpha D}{n}
            +\frac{(s^2+h^2)\alpha^{-1}}{n^2}+o(n^{-1}).  
\end{equation}  
The second is a counting
lemma (Lemma \ref{lem:local-counting}): each added class-edge of $G$ creates about $c(n,K_{r+1})$ copies of
$K_{r+1}$, up to small losses coming from the imbalance $D$ and the deletions $h$, that is,  
\[         \texttt{\#} K_{r+1}(G)\ge s\,c(n,K_{r+1})-C_r\,s \left(\sqrt D+h+1 \right)\,n^{r-2}.  \]
Combining with the assumption $\texttt{\#}K_{r+1}(G) \le q\, c( n,K_{r+1})$, we get $\sqrt{D}+ h +1 \ge \kappa \frac{(s-q) n}{s}$ for a constant $\kappa=\kappa(r) >0$.  
Next, we show that the right-hand side of (\ref{eq-overview}) is negative for $s\neq q$. 
In the case $s<q$, 
the spectral estimate (\ref{eq-overview}) gives $\lambda (G)- \lambda (Y_{n,r,q}) \le \frac{2(s-q)}{n} + \frac{s^2\alpha^{-1}}{n^2}+ o(n^{-1}) <0$. 
In the case $s> q$, we write $s:=q+\ell$ for some $\ell \ge 1$.  Then $\sqrt{D} + h +1 \ge \kappa \frac{\ell n}{s}$, which yields $\frac{2h + 2\alpha D}{n} \ge \frac{2\alpha \kappa \ell}{s} - \frac{2\alpha }{n}$. 
So $\lambda (G)- \lambda (Y_{n,r,q}) \le \frac{2\ell}{n} - \frac{2\alpha \kappa \ell }{s} + O_r(n^{-1}) <0$ since  $s=O_r(\sqrt{n}\,)$.  
Now, we consider the case $s=q$. 
If $h\ge 1$ or $D\ge 1$, then one can verify that $\frac{-2h}{n} - \frac{2\alpha D}{n} + \frac{h^2 \alpha^{-1}}{n^2} \le 
 - \frac{2\alpha}{n} + o(n^{-1})$. 
 Thus $\lambda (G) - \lambda (Y_{n,r,q}) \le - \frac{2\alpha}{n} + \frac{q^2 \alpha^{-1}}{n^2} + o(n^{-1}) <0$, a contradiction. 
 Consequently, we must have $s=q$ and $h=D=0$. 
 Hence $G$ is obtained from the Tur\'{a}n graph $T_{n,r}$ by adding $q$ edges. 

To see that the constant $\delta_r$ cannot be improved, fix
$q=(\delta_r+\eps)\sqrt n$ and let $H=H_{n,r,q}$ be built
from the unbalanced complete $r$-partite graph
$K_r(m+1,m-1,m,\ldots,m)$ (where $n=rm$, with one part
enlarged and one shrunk by one vertex) by adding a star
with $q$ edges inside the part of size $m+1$. Shrinking a
part lowers the count, so $H$ has only
$q(m-1)m^{r-2}<q\,c(n,K_{r+1})$ copies of $K_{r+1}$. Yet
the added star raises the spectral radius enough that
$\lambda(H)>\lambda(Y_{n,r,q})$ once
$q>(\delta_r+\eps)\sqrt n$ (Example \ref{exam-clique}). Thus, 
above the threshold $\delta_r$, the spectral condition $\lambda(H)>\lambda(Y_{n,r,q})$ no longer
forces $q\,c(n,K_{r+1})$ copies. 
The threshold $\delta_r=\sqrt2\,\alpha$ is exactly the point at which the second-order term gain $q^2\alpha^{-1}/n^2$ 
from concentrating the added edges into a star balances the loss $2\alpha /n$ from making the underlying $r$-partite graph unbalanced, that is, $q^2 \alpha^{-1}=2\alpha n$, so $q=\sqrt{2} \alpha  \cdot \sqrt{n}$.

The proof of Theorem \ref{thm-odd-cycles} follows the same
plan. The stability input is Lemma \ref{lem-stability-odd},
the second-order spectral estimate is
Lemma \ref{lem:local-spectrum-odd}, and the counting step is
Lemma \ref{lem:local-counting-odd}, which shows that each
added class-edge lies in about $c(n,C_{2k+1})$ copies of
$C_{2k+1}$. The same three cases $s<q$, $s=q$, $s>q$ give
the bound, and the construction in
Example \ref{exam-odd-cycle} shows that $1/\sqrt2$ is best
possible.

\paragraph{Organization.}  
In Section \ref{sec:prelim}, we introduce the tools: 
a second-order expansion for the spectral radius of a complete $r$-partite graph, 
the expansion of $\lambda(Y_{n,r,q})$, 
and a sharp construction obtained by adding a star. 
In Section \ref{sec-1-8-and-1-7}, 
we first prove the key estimate in Theorem \ref{thm:local-spectral}, and then we apply it to prove Theorem \ref{thm-continuous}. 
In Section \ref{sec:clique}, we prove
Theorem \ref{thm-cliques}, and in Section \ref{sec:odd}, we prove
Theorem \ref{thm-odd-cycles}. 
In Section \ref{sec-concluding-remark}, we conclude with some open spectral extremal problems for interested readers.

\section{Preliminaries} 

\label{sec:prelim}

We need the following structural theorem of 
Fang, Li, Lin and Ma \cite{FLLM2025}.

\begin{theorem}[Fang--Li--Lin--Ma \cite{FLLM2025}]
\label{thm:FLLM}
Let $0<\varepsilon_1<1/2$, and let $F$ be a color-critical graph  with $\chi(F)=r+1$. There are constants
$\delta_F,\eta_F>0$ such that the following holds for every sufficiently large $n$.
If $  1\le q\le \delta_F n$ 
and $G$ is an $n$-vertex graph satisfying
\[
        \lambda(G)\ge \lambda(T_{n,r})
\]
and
\[
        \texttt{\#} F(G)\le (q+\varepsilon_1)c(n,F),
\]
then $G$ can be obtained from a complete $r$-partite graph $  K_r(n_1,n_2,\ldots,n_r)$ 
by adding $s$ class-edges and deleting $h$ cross-edges, where $  h\le 2s\le {4q}/{\eta_F} $ and 
$|n_i-n_j|\le 4\sqrt{s}$ for all $i\neq j$.  
\end{theorem}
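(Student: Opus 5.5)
The plan is to run the standard spectral supersaturation-stability scheme in three stages: stability, local counting, and a spectral comparison that bounds $h$ and the part imbalances.

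\textbf{Stability.} First, $\texttt{\#}F(G)\le (q+\varepsilon_1)c(n,F)=O(\delta_F n^{v(F)-1})=o(n^{v(F)})$. By the graph removal lemma we can delete $o(n^2)$ edges and obtain an $F$-free graph $G'$. Since $\lambda(G)\ge\lambda(T_{n,r})$ and deleting $o(n^2)$ edges changes $\lambda$ by $o(n)$, we have $\lambda(G')\ge(1-\frac1r-o(1))n$. Nikiforov's spectral stability theorem for $F$-free graphs then shows that $G'$, and hence $G$, is $o(n^2)$-close to $T_{n,r}$. Fix a partition $V_1\cup\dots\cup V_r$ minimizing the number of class-edges. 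Call a vertex \emph{bad} if it has at least $\gamma n$ neighbours in its own class or misses at least $\gamma n$ vertices of some other class, for a small constant $\gamma$. The usual counting shows there are only $o(n)$ bad vertices.

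\textbf{Eliminating bad vertices and local counting.} This is the step I expect to be the main obstacle. A bad vertex with large degree into its own class lies in $\Omega(n^{v(F)-1})$ copies of $F$. That is too many once $\delta_F$ is small, so every bad vertex has low degree. To rule out low-degree vertices I would use the Perron vector $x$. From $\lambda x_v=\sum_{u\sim v}x_u$, all good vertices have $x_u=(1+o(1))x_{\max}$. If some vertex $v$ has $d(v)\le(1-\frac1r-\gamma)n$, then deleting $v$ and comparing $\lambda(G-v)$ with $\lambda(T_{n-1,r})$ gives a contradiction, via the standard recursion $\lambda(G)^2\le\lambda(G-v)^2+2d(v)$-type bounds together with induction on $n$ or a direct Rayleigh-quotient argument. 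So all vertices are good. Colour-criticality of $F$ now does the counting: if $uv$ is a class-edge between good vertices, then $u$ and $v$ have $(1-O(\gamma))n/r$ common neighbours in every other class. Embedding the critical edge of $F$ on $uv$ and the rest greedily gives at least $\eta_F\,c(n,F)$ copies through each class-edge. Class-edges span a graph of bounded multiplicity in any copy, so $s\,\eta_F c(n,F)\le 2\,\texttt{\#}F(G)\le 2(q+\varepsilon_1)c(n,F)$, which gives $2s\le 4q/\eta_F$.

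\textbf{Bounding $h$ and the imbalance.} Let $K=K_r(n_1,\dots,n_r)$ be the complete $r$-partite graph on the partition, so that $G$ is $K$ plus $s$ class-edges minus $h$ cross-edges. With the Perron vector of $G$ nearly constant (entries $(1+O(s/n))/\sqrt n$), the Rayleigh quotient gives
\[
\lambda(T_{n,r})\le\lambda(G)\le\lambda(K)+\frac{2(s-h)}{n}+O\Big(\frac{s^2}{n^2}\Big).
\]
For the complete multipartite part we use the first-order bound $\lambda(K)\le\lambda(T_{n,r})-c\sum_{i<j}(n_i-n_j)^2/n$ for an explicit $c>0$. Combining the two displays yields $h\le s+O(s^2/n)\le 2s$. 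It also yields $\sum_{i<j}(n_i-n_j)^2\le 2s/c+o(1)$, and tracking the constant $c$, which is about $1/r$, should give $|n_i-n_j|\le 4\sqrt{s}$. If the constants do not close cleanly at this point, I would instead move vertices between parts to reduce the imbalance. Each move changes $s$ and $h$ only by the vertex's own-class degree, which is $O(\sqrt s)$ after the previous step, so the bounds survive.
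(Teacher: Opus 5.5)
The paper does not prove this statement; it imports it from Fang--Li--Lin--Ma, so your sketch has to stand on its own. Your overall architecture (removal plus spectral stability, cleaning, counting through class-edges, Rayleigh comparison) is the standard one. However, it breaks at exactly the step you flag: you never actually eliminate the low-degree vertices, and the tool you name cannot do it.

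\textbf{The main gap: low-degree vertices.} Write $\alpha=1-\frac1r$ and take $\lambda(G)\ge\lambda(T_{n,r})$, $d(v)\le(\alpha-\gamma)n$. The bound $\lambda(G)^2\le\lambda(G-v)^2+2d(v)$ only gives $\lambda(G-v)^2-\lambda(T_{n-1,r})^2\ge 2n(\gamma-\alpha(1-\alpha))+O(1)$, which is negative for small $\gamma$.

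What works is the Rayleigh bound $\lambda(G)-\lambda(G-v)\le\lambda x_v^2/(1-x_v^2)$ for the unit Perron vector $x$. Combine it with $x_v\le d(v)/(\lambda\sqrt n)+o(n^{-1/2})$, which holds because $x$ is $o(1)$-close in $\ell_2$ to the Perron vector of $T_{n,r}$ (stability plus eigenvector perturbation). This gives $\lambda(G-v)\ge\lambda(T_{n-1,r})+\gamma-o(1)$.

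Even that is no contradiction. $G-v$ is allowed to exceed $\lambda(T_{n-1,r})$, and induction on $n$ is not directly available for a statement valid only for large $n$. You need a mechanism that turns a constant spectral surplus into too many copies of $F$. One option is iterated deletion: keep removing vertices of degree at most $(\alpha-\gamma)(n-k)$, so that $\lambda(G_k)\ge\lambda(T_{n-k,r})+k(\gamma-o(1))$.
\begin{itemize}
\item If this runs for $\varepsilon n$ steps, the linear surplus forces $\Omega(n^{v(F)})$ copies of $F$, by the removal lemma plus spectral Erd\H{o}s--Stone.
\item If it stops at some $1\le k<\varepsilon n$, then $G_k$ has high minimum degree and your class-edge counting applies to it, giving $s_k=O(\delta_F n/\eta)$. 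The first-order bound $\lambda(G_k)\le\lambda(T_{n-k,r})+2s_kx_{\max}^2$ then caps the surplus at $O(\delta_F/\eta)$, a contradiction once $\delta_F\ll\gamma\eta$.
\end{itemize}
Without some such argument, the minimum-degree claim that all your later counting rests on is unproved.

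\textbf{Fixable problems elsewhere.}
\begin{itemize}
\item \emph{Thresholds.} With a single $\gamma$, a vertex of own-class degree below $\gamma n$ that misses $\gamma n$ vertices of another class can still have degree about $\alpha n$. So ``every bad vertex has low degree'' needs a hierarchy of thresholds. The $\Omega(n^{v(F)-1})$ count at a vertex of large own-class degree also needs the good-vertex threshold to be much smaller than the own-class one.
\item \emph{Multiplicity.} A copy of $F$ can contain up to $e(F)$ class-edges, not two. Absorb this into $\eta_F$.
\item \emph{Circular Perron estimate.} Your estimate $(1+O(s/n))/\sqrt n$ presupposes that $h$ and the imbalance are already small. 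Bootstrap instead: $x^TA(K)x\le\lambda(K)\le\lambda(T_{n,r})\le\lambda(G)$ gives $hx_{\min}^2\le sx_{\max}^2$, hence $h\le(1+O(\gamma))s\le 2s$.
\item \emph{Multipartite bound.} $\lambda(K)\le\lambda(T_{n,r})-c\sum_{i<j}(n_i-n_j)^2/n$ is false for $K=T_{n,r}$ when $r\nmid n$. The right form is $\lambda(K)\le\lambda(T_{n,r})-2\alpha D/n+\cdots$ with $D=e(T_{n,r})-e(K)$, and the effective constant is $\alpha/r=(r-1)/r^2$, not about $1/r$.
\item \emph{Imbalance bound for large $r$.} Using $\sum_{i<j}(n_i-n_j)^2\ge(n_i-n_j)^2$ gives only $|n_i-n_j|\le\sqrt{2r^2s/(r-1)}$, which exceeds $4\sqrt s$ for $r\ge7$. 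Use $D\ge\lfloor d^2/4\rfloor$ with $d=n_i-n_j$ instead; this yields $d^2\le 4s/\alpha+1+o(1)\le 8s+1+o(1)$.
\item \emph{Moving vertices.} Your fallback is wrong. Moving $v$ from $V_i$ to $V_j$ turns roughly $n/r$ edges into class-edges and roughly $n/r$ non-edges into deleted cross-edges, so $s$ and $h$ change by $\Theta(n)$, not by $v$'s own-class degree.
\item \emph{Degenerate case.} For bounded $q$ and $r\nmid n$, $G=T_{n,r}$ satisfies the hypotheses, and any representation with $h\le 2s=O(1)$ uses its own partition. Then $s=0$ and the conclusion demands $n_i=n_j$, which is impossible. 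So the imbalance bound can hold only for $s\ge1$ (as in the paper's application, where $\lambda(G)>\lambda(T_{n,r})$) or with an additive constant.
\end{itemize}
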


The following lemma bounds 
 the sum of squares of the degrees in a graph of given size.  
  
 \begin{lemma}[See \cite{NZ2021b}] \label{lem-deg-squ}
If $G$ is a graph with $m$ edges, then  
\[  \sum_{v\in V(G)}d^2(v) \le m^2 +m. \]  
 \end{lemma}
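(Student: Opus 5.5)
Since $d(v)$ counts the edges at $v$, the identity
\[
\sum_{v\in V(G)} d^2(v)=\sum_{v}\sum_{e\ni v} d(v)=\sum_{uv\in E(G)}\bigl(d(u)+d(v)\bigr)
\]
turns the statement into an upper bound on $d(u)+d(v)$ for a single edge $uv$. It therefore suffices to show $d(u)+d(v)\le m+1$ for every edge $uv$, because summing over the $m$ edges then gives $\sum_v d^2(v)\le m(m+1)=m^2+m$. Isolated vertices contribute nothing, so we may ignore them.

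Fix an edge $uv$. Every edge other than $uv$ is incident to $u$, to $v$, or to neither. The edge $uv$ is counted once in $d(u)$ and once in $d(v)$, and an edge of the form $ux$ or $vx$ with $x\notin\{u,v\}$ is counted exactly once; in a simple graph there are no multiple edges, so no other edge meets both $u$ and $v$. Hence $d(u)+d(v)=2+\#\{\text{edges}\ne uv \text{ meeting } \{u,v\}\}\le 2+(m-1)=m+1$, which is the required bound.

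There is no real obstacle. The only point that needs care is the overcount in the second step: the edge $uv$ itself is counted twice, and the bound $m+1$, rather than $m$, is exactly what absorbs this. Equality holds when every edge meets every other edge, for instance a star or a triangle, which is consistent with the sharpness of the stated bound.
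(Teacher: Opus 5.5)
Your proof is correct and complete: the identity $\sum_v d^2(v)=\sum_{uv\in E(G)}\bigl(d(u)+d(v)\bigr)$, combined with $d(u)+d(v)\le m+1$ for each edge (which uses that $G$ is simple), is the standard double-counting argument for this bound. The paper gives no proof of its own and only quotes the lemma from the literature, so there is nothing further to compare; your remark that equality holds exactly for stars and triangles is also right.
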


\subsection{The spectral radius of complete multipartite graphs}

First, we estimate the spectral radius of a nearly balanced complete
$r$-partite graph.

\begin{theorem} 
\label{thm:multipartite-expansion}
Let $  K=K_r(n_1,n_2,\ldots,n_r)$ 
be a complete $r$-partite graph on $n$ vertices. 
Write $\alpha= \frac{r-1}{r}$ and $  n_i=\frac nr+a_i$, where $\sum_{i=1}^r a_i=0$, and assume $ B:=\sum_{i=1}^r a_i^2=O_r(\sqrt n)$. 
Then
\begin{equation}
        \lambda(K)
    =\alpha n-\frac{\alpha B}{n}+O_r(n^{-5/4}). 
        \label{eq:multipartite-expansion}
\end{equation}
Moreover, if $ D:=e(T_{n,r})-e(K)$, 
then
\begin{equation*}
        \lambda(K)
        =\lambda(T_{n,r})-\frac{2\alpha D}{n}+O_r(n^{-5/4}). 
\end{equation*}
\end{theorem}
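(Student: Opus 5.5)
We propose to use the characteristic equation of a complete multipartite graph. By Perron--Frobenius, the Perron vector of $K$ is constant on each part, say equal to $x_i$ on part $i$. The eigen-equation reads $\lambda x_i=\sum_{j\ne i} n_j x_j = S-n_ix_i$, where $S:=\sum_j n_jx_j$. Hence $x_i=S/(\lambda+n_i)$. Multiplying by $n_i$ and summing over $i$, then cancelling $S>0$, shows that $\lambda=\lambda(K)$ is the unique positive root of
\[
f(\lambda):=\sum_{i=1}^r\frac{n_i}{\lambda+n_i}=1 .
\]
We then write $\lambda=\alpha n+\mu$ and expand. With $n_i=n/r+a_i$ we have $\lambda+n_i=n+\mu+a_i$, so
\[
f=\sum_i\frac{n/r+a_i}{n+\mu+a_i}
=\sum_i\Big(\frac1r+\frac{a_i}{n}\Big)\Big(1+\frac{\mu+a_i}{n}\Big)^{-1}.
\]

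First, a crude a priori bound. Since $f$ is decreasing in $\lambda$ and $f(\alpha n)=1+O(B/n^2)$, we get $\mu=O_r(B/n)=O_r(n^{-1/2})$. Also each $|a_i|\le\sqrt B=O_r(n^{1/4})$.

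Next, expand the geometric series to second order:
\[
\Big(1+\tfrac{\mu+a_i}{n}\Big)^{-1}=1-\tfrac{\mu+a_i}{n}+\tfrac{(\mu+a_i)^2}{n^2}+O\Big(\tfrac{|\mu+a_i|^3}{n^3}\Big).
\]
We substitute, use $\sum a_i=0$, and collect terms. The constant part gives $1$. The first-order terms give $-\mu/n-\sum a_i/(rn)+\sum a_i/n$, which reduces to $-\mu/n$. The second-order terms give $-\sum a_i^2/n^2+\sum_i(\mu+a_i)^2/(rn^2)$, which equals $-(1-\tfrac1r)B/n^2$ up to $O(\mu^2/n^2)$. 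The remaining errors are bounded by $\sum|a_i|(\mu+|a_i|)^2/n^3+\sum|\mu+a_i|^3/n^3=O_r(n^{3/4}/n^3)$, since $\sum|a_i|^3\le B^{3/2}=O_r(n^{3/4})$. Setting $f=1$ therefore gives
\[
\mu=-\frac{\alpha B}{n}+O_r(n^{-9/4})\cdot n+O(\mu^2/n)=-\frac{\alpha B}{n}+O_r(n^{-5/4}),
\]
which is \eqref{eq:multipartite-expansion}. The main care needed is this bookkeeping of the $\mu$ versus $a_i$ orders, specifically checking that the cubic error $B^{3/2}/n^2$ is $O(n^{-5/4})$. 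This holds exactly because $B=O(\sqrt n)$, and it is where the exponent $5/4$ comes from.

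For the second statement, we compute $e(K)=\tfrac12(n^2-\sum n_i^2)=\tfrac12\big(n^2-\tfrac{n^2}{r}-B\big)$. Apply the same formula to $T_{n,r}$, whose own $B_T=\sum a_i^2\le r/4$ is bounded since its parts differ by at most one. This gives $D=e(T_{n,r})-e(K)=(B-B_T)/2$. Applying \eqref{eq:multipartite-expansion} to both graphs and subtracting,
\[
\lambda(K)-\lambda(T_{n,r})=-\frac{\alpha(B-B_T)}{n}+O_r(n^{-5/4})=-\frac{2\alpha D}{n}+O_r(n^{-5/4}).
\]
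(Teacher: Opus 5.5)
Your proposal is correct and follows essentially the paper's route: the characteristic equation $\sum_i n_i/(\lambda+n_i)=1$, an expansion around $\lambda=\alpha n$, and then subtracting the Tur\'an case via $D=(B-B_T)/2$. The only difference is that you expand to second order in $(\mu+a_i)/n$, whereas the paper evaluates $F(0)$ and linearizes with the mean value theorem and $F'(z)=-1/n+O_r(n^{-7/4})$.

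One small point needs filling in: ``$f$ is decreasing'' alone does not give $\mu=O_r(B/n)$. You need that $f(\alpha n)=1-\alpha\sum_i a_i^2/(n(n+a_i))\le 1$, which places the root in $(0,\alpha n]$. On that interval $f'(\lambda)\le -\frac{1}{4n}$, and this gives the a priori bound. These two facts play exactly the role of Wilf's bound and $F'\le -\frac{1}{2n}$ in the paper.
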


\begin{proof} 
Denote $\lambda =\lambda (K)$. 
By the Perron--Frobenius theorem, the
Perron vector of $K$ is positive and, by symmetry, constant on each part. 
Let $x_i$ be
its value on the part $V_i$ and $S:=\sum_j n_jx_j$. 
A vertex of part $V_i$ is adjacent
to all vertices outside part $V_i$, so $\lambda x_i= \sum_{j\neq i} n_jx_j = S-n_ix_i$, i.e., 
$$ x_i= \frac{S}{\lambda+n_i} >0. $$ Multiplying by $n_i$, summing, and dividing by $S>0$
gives the characteristic equation
\begin{equation}
        \sum_{i=1}^r\frac{n_i}{\lambda+n_i}=1.
        \label{eq:root-equation}
\end{equation}
Put $m:=n/r$ and $\lambda :=\alpha n+z$ for some $z$, which will be determined later. 
Since $K$ is $K_{r+1}$-free, Wilf's bound gives
$\lambda \le \alpha n$. On the other hand,
$\lambda \ge \frac{2e(K)}{n} 
=\frac{1}{n}\bigl(n^2-\sum_i n_i^2\bigr)
=\alpha n-\frac{B}{n}\ge \alpha n-1$
for large $n$, since $B=O_r(\sqrt n)$. Hence the value
$z=\lambda -\alpha n$ lies in $[-1,0]$,
and it is therefore the unique root of $F$ there. 
Since $\alpha n+m=n$, we have
$\lambda+n_i=n+z+a_i$, so \eqref{eq:root-equation} reads $F(z)=0$, where
\[
        F(z):=\sum_{i=1}^r\frac{m+a_i}{n+z+a_i}-1 .
\]
Since $\lambda$ is the largest root of
\eqref{eq:root-equation}, the corresponding $z$ is the root of $F$.

From $a_i^2\le B=O_r(\sqrt n)$ we get
$|a_i|\le\sqrt B=O_r(n^{1/4})$. Hence for $|z|\le 1$ and all $i$, we have 
$|z+a_i|=O_r(n^{1/4})$, so $(n+z+a_i)^2=n^2\bigl(1+O_r(n^{-3/4})\bigr)$. 
Since $\sum_i(m+a_i)=n$, we get 
\begin{equation}
\label{eq-F-qiudao}
        F'(z)=-\sum_{i=1}^r\frac{m+a_i}{(n+z+a_i)^2}
             =-\frac1{n^2}\sum_{i=1}^r(m+a_i)\bigl(1+O_r(n^{-3/4})\bigr)
             =-\frac1n+O_r(n^{-7/4}). 
\end{equation}
In particular $F'(z)\le-\tfrac1{2n}$ on $z\in [-1,1]$ for large $n$, so $F$ has a unique
root there.

 Using $\tfrac1r+\alpha=1$, we have $m+a_i = \tfrac1r(n+a_i)+\alpha a_i$. Then 
 $\frac{m+a_i}{n+a_i} 
        =\frac1r+\frac{\alpha a_i}{n+a_i}$. So 
\[ 
        F(0)=\alpha\sum_{i=1}^r\frac{a_i}{n+a_i}.
\]
From $\frac{a_i}{n+a_i}=\frac{a_i}{n}-\frac{a_i^2}{n^2}
+O_r\!\bigl(\tfrac{|a_i|^3}{n^3}\bigr)$ and $\sum_i a_i=0$, we get 
\[
        \sum_{i=1}^r\frac{a_i}{n+a_i}
        =-\frac{B}{n^2}+O_r\!\Bigl(\frac{\sum_i|a_i|^3}{n^3}\Bigr)
        =-\frac{B}{n^2}+O_r(n^{-9/4}),
\]
where $\sum_i|a_i|^3\le(\max_i|a_i|)\,B=O_r(n^{1/4})\cdot O_r(\sqrt n)
=O_r(n^{3/4})$. Therefore
\begin{equation}
        F(0)=-\frac{\alpha B}{n^2}+O_r(n^{-9/4}).
        \label{eq:F0}
\end{equation}

 By \eqref{eq:F0}, we have $|F(0)|=O_r(n^{-3/2})$, so with
$F'\le-\tfrac1{2n}$ the root satisfies $|z|\le 2n|F(0)|=O_r(n^{-1/2})<1$. By the
mean value theorem, we get 
$F(z)-F(0)= F'(\zeta) \cdot z$  for some $\zeta$ between $0$ and $z$. Thus, we get  
$z=- \frac{F(0)}{F'(\zeta)}$. 
Inserting \eqref{eq-F-qiudao} and \eqref{eq:F0},
\[
        z=-\frac{-\alpha B/n^2+O_r(n^{-9/4})}{-\tfrac1n+O_r(n^{-7/4})}
         =-\frac{\alpha B}{n}+O_r(n^{-5/4}).
\]
Hence, we obtain 
$    \lambda(K)=\alpha n+z=\alpha n-\frac{\alpha B}{n}+O_r(n^{-5/4})$, 
which proves \eqref{eq:multipartite-expansion}.

\medskip
We write the part sizes of the Tur\'{a}n graph $T_{n,r}$ as 
$t_i=n/r +b_i$ for $i\in \{1,\ldots ,r\}$, where $\sum_i b_i=0$. Since $T_{n,r}$ is balanced, i.e., $|t_i-t_j| \le 1$ for all $i\neq j$, we have $|b_i|\le 1$ and
$$B_T:=\sum_{i=1}^r b_i^2=O_r(1). $$ 
Applying \eqref{eq:multipartite-expansion}
to $T_{n,r}$ gives 
\begin{equation}
        \lambda(T_{n,r})=\alpha n-\frac{\alpha B_T}{n}+O_r(n^{-5/4}).
        \label{eq-expand-lam-T}
\end{equation}
For any complete $r$-partite graph with part sizes $x_1,\ldots ,x_r$ summing to $n$, we have  
$$ e(K_r(x_1,\ldots ,x_r))=\frac12 
\left(n^2-\sum_{i=1}^r x_i^2\right).$$
Since 
$\sum_i n_i^2= \sum_i (\frac{n}{r} +a_i)^2 = \tfrac{n^2}{r}+B$ and $\sum_i t_i^2=\tfrac{n^2}{r}+B_T$, we obtain 
\begin{equation} \label{eq-DB}
        D=e(T_{n,r})-e(K)=\frac12\left(\sum_i n_i^2-\sum_i t_i^2\right)
        =\frac12(B-B_T).
\end{equation}
Subtracting \eqref{eq-expand-lam-T} from \eqref{eq:multipartite-expansion}, it follows that 
\[
        \lambda(K)=\lambda(T_{n,r})-\frac{\alpha(B-B_T)}{n}+O_r(n^{-5/4})
                 =\lambda(T_{n,r})-\frac{2\alpha D}{n}+O_r(n^{-5/4}). \qedhere 
\]
\end{proof}

\subsection{The spectral radius of Tur\'{a}n's graph plus a matching}

The graph $Y_{n,r,q}$ is obtained from the Tur\'{a}n graph $T_{n,r}$ by adding a matching with $q$ edges into a largest part. We show that such a matching raises the spectral radius by $2q/n+O_r(n^{-3/2})$.

\begin{lemma} 
\label{lem:Y-expansion}
If $q$ is a positive integer with $q=O_r(\sqrt n \, )$, then
\begin{equation*}
        \lambda(Y_{n,r,q})
        =\lambda(T_{n,r})+\frac{2q}{n}+O_r(n^{-3/2}). 
\end{equation*}
\end{lemma}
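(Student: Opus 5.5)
Let $\lambda:=\lambda(Y_{n,r,q})$, $\lambda_0:=\lambda(T_{n,r})$, let $V_1$ be the largest part of $T_{n,r}$ (the one receiving the matching $M$), and let $V_2,\dots,V_r$ be the other parts. The plan is to prove matching upper and lower bounds on $\lambda-\lambda_0$ by exploiting the symmetry of $Y_{n,r,q}$, in the same spirit as the proof of Theorem~\ref{thm:multipartite-expansion}.

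\emph{Equitable partition.} In $Y_{n,r,q}$ every vertex of $V_1$ covered by $M$ has the same neighbourhood type: one neighbour inside $V_1$ plus all of $V_2\cup\dots\cup V_r$. The same holds for the uncovered vertices of $V_1$. Hence the partition
\[
V_1=U\cup W,\qquad |U|=2q,\qquad |W|=t_1-2q,
\]
together with $V_2,\dots,V_r$, is equitable. So $\lambda$ is the largest eigenvalue of the $(r+1)\times(r+1)$ quotient matrix, and the Perron vector is constant on each cell, with values $y$ on $U$, $w$ on $W$, and $x_j$ on $V_j$ for $j\ge 2$.

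\emph{Characteristic equation.} Put $S:=2qy+(t_1-2q)w+\sum_{j\ge2}t_jx_j$. The eigen-equations read
\[
\lambda y=S-t_1y+\bigl(2qy+(t_1-2q)w\bigr)+y-\bigl(2qy+(t_1-2q)w\bigr),
\]
that is, $(\lambda+t_1-1)y$ equals the mass outside $V_1$, while $(\lambda+t_1)w$ equals that same mass. Similarly $(\lambda+t_j)x_j=S$ for $j\ge2$. Writing $P:=\sum_{j\ge 2}t_jx_j$ for the mass outside $V_1$, this gives
\[
y=\frac{P}{\lambda-1},\qquad w=\frac{P}{\lambda},\qquad x_j=\frac{S}{\lambda+t_j}.
\]
Eliminating $S$ and $P$ gives a single scalar equation $G(\lambda)=0$ that generalises \eqref{eq:root-equation}:
\[
\Bigl(\frac{2q}{\lambda-1}+\frac{t_1-2q}{\lambda}\Bigr)\sum_{j\ge2}\frac{t_j}{\lambda+t_j}=1-\sum_{j\ge2}\frac{t_j}{\lambda+t_j}.
\]
Equivalently, the bracket equals $\frac{t_1}{\lambda}+\frac{2q}{\lambda(\lambda-1)}$.

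\emph{Perturbation argument.} Compare with $F_0(\lambda)=0$, the equation for $T_{n,r}$, which is the same identity with $q=0$. The extra term $\frac{2q}{\lambda(\lambda-1)}$ has size about $2q/(\alpha n)^2$, which is $O_r(n^{-3/2})$ since $q=O_r(\sqrt n)$. I would then argue as in the proof of Theorem~\ref{thm:multipartite-expansion}:
\begin{enumerate}
\item Set $\lambda=\lambda_0+z$. First bound $z$ crudely: $z\ge 0$ by monotonicity (adding edges), and $z\le 2q/n+O(1/n)$, for example via $\lambda(Y)\le\lambda_0+\lambda(M)=\lambda_0+1$ with the boundedness refined. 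A crude bound $|z|\le1$ already suffices for the Taylor step.
\item Linearise the equation in $z$ around $\lambda_0$. The derivative of the $q=0$ equation with respect to $\lambda$ is $\Theta_r(1/n)$, exactly as in \eqref{eq-F-qiudao}. The forcing term is
\[
\frac{2q}{\lambda_0^2}\sum_{j\ge2}\frac{t_j}{\lambda_0+t_j}\approx\frac{2q}{\alpha^2n^2}\cdot\alpha=\frac{2q}{\alpha n^2}.
\]
\item Dividing the forcing term by the derivative constant gives $z=\frac{2q}{n}\bigl(1+O_r(n^{-1/2})\bigr)$, i.e.\ $z=\frac{2q}{n}+O_r(n^{-3/2})$, which is the claim.
\end{enumerate}

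\emph{Main obstacle.} The delicate step is computing the derivative constant exactly, so that the leading coefficient comes out as precisely $2$ rather than some $r$-dependent multiple. I would do this by evaluating everything at the balanced point $t_j=n/r$, using $\lambda_0+t_j=n+O(1)$. The corrections from the imbalance of $T_{n,r}$ (parts differing by at most $1$), from the $-1$ in $\lambda-1$, and from $q\,z$ cross terms are all relatively $O(n^{-1/2})$ or smaller. Multiplied by $z=O(q/n)$, they contribute only $O_r(n^{-3/2})$.

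As a sanity check that the coefficient is $2$, there is a heuristic Rayleigh-quotient argument. With the Perron vector of $T_{n,r}$ normalised to unit length, every coordinate equals $1/\sqrt n\,(1+O(1/n))$. Adding $q$ edges then raises the quadratic form by $2q\cdot\frac1n$, and second-order perturbation terms are of size $O(q^2/n^2\cdot 1/n)$, which is negligible. This confirms the constant independently of the equation-based computation.
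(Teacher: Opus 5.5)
Your route is genuinely different from the paper's, and it works. The paper never writes down a characteristic equation for $Y_{n,r,q}$. It takes the unit Perron vector $\bm u$ of $T_{n,r}$, whose coordinates are all $1/\sqrt n+O_r(n^{-3/2})$. The lower bound comes from $\bm u^T(A_T+M)\bm u=\lambda_0+2q/n+O_r(q/n^2)$. For the upper bound it splits a unit vector as $a\bm u+\bm y$ with $\bm y\perp\bm u$, and uses two facts: $T_{n,r}$ has exactly one positive eigenvalue (so $\bm y^TA_T\bm y\le 0$), and $\|M\|=1$. This gives $\lambda(Y_{n,r,q})\le\lambda_0+\bm u^TM\bm u+\|M\bm u\|^2/(\lambda_0+\bm u^TM\bm u-1)$ with $\|M\bm u\|^2=O_r(q/n)$.

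So your closing ``sanity check'', once made rigorous, is exactly the paper's proof. The second-order term there has size $q/n^2$, not $q^2/n^3$, but it is still $O_r(n^{-3/2})$.

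The two routes buy different things. The paper's argument uses no symmetry of the added edges, which is why essentially the same computation is reused for Theorem~\ref{thm:local-spectral}. Your equitable-partition argument is tied to the symmetric structure of $Y_{n,r,q}$, but in exchange it yields an exact scalar equation. That equation is correct. It becomes transparent once you note that your bracket equals $\tilde t_1/\lambda$ with $\tilde t_1:=t_1+\frac{2q}{\lambda-1}$. Your equation is then precisely \eqref{eq:root-equation} for $T_{n,r}$ with $t_1$ replaced by the effective size $\tilde t_1$.

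As written, however, your error bookkeeping does not give the stated bound. For $q\asymp\sqrt n$ one has $z\asymp n^{-1/2}$, so ``$z=\frac{2q}{n}(1+O_r(n^{-1/2}))$'' yields only $z=\frac{2q}{n}+O_r(n^{-1})$. The scale $n^{-1}$ is exactly where the threshold $\delta_r$ is later decided, so this is not harmless. You need relative error $O_r(1/n)$, and that is what actually holds.

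Set $\Phi(\lambda):=\sum_{j\ge2}\frac{t_j}{\lambda+t_j}+\frac{\tilde t_1}{\lambda+\tilde t_1}-1$. Then:
\begin{itemize}
\item $\Phi(\lambda_0)=\frac{\tilde t_1}{\lambda_0+\tilde t_1}-\frac{t_1}{\lambda_0+t_1}=\frac{2q}{n^2}\bigl(1+O_r(1/n)\bigr)$;
\item $\Phi'=-\frac1n\bigl(1+O_r(1/n)\bigr)$ uniformly on $[\lambda_0,\lambda_0+1]$.
\end{itemize}
Both hold because the imbalance $|t_i-t_j|\le1$, the $-1$ in $\lambda-1$, and the $\lambda$-dependence of $\tilde t_1$ all enter at relative order $1/n$ or smaller. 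In your own normalization the derivative is $-\frac{1}{\alpha n}$, which matches your forcing term $\frac{2q}{\alpha n^2}$; it is not the $-\frac1n$ of \eqref{eq-F-qiudao}, which is why the constant $2$ looked delicate. The mean value theorem, together with $0\le z\le 1$ from Weyl's inequality, then gives $z=\frac{2q}{n}\bigl(1+O_r(1/n)\bigr)=\frac{2q}{n}+O_r(q/n^2)$, as required.

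Also correct the eigen-equations on $V_1$. A covered vertex sees its partner and all of $V_2\cup\dots\cup V_r$, so $\lambda y=y+P$ and $\lambda w=P$. Your displayed identity and the phrases ``$(\lambda+t_1-1)y$'' and ``$(\lambda+t_1)w$'' are wrong, although the formulas $y=P/(\lambda-1)$ and $w=P/\lambda$ that you go on to use are right.
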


\begin{proof}
Let $A_T$ be the adjacency matrix of the Tur\'{a}n graph $T_{n,r}$. Let $M$ be the adjacency matrix of the
$q$ disjoint edges added to $T_{n,r}$ to form $Y_{n,r,q}$. Then
\[
        A(Y_{n,r,q})=A_T+M.
\]
Let $  \mu:=\lambda(T_{n,r})$ 
and let $\bm{u}\in \mathbb{R}^n$ be a unit Perron vector of the Tur\'{a}n graph $T_{n,r}$. 
Because the part sizes of $T_{n,r}$ differ by at most one, we first show that for every vertex $v$,
\begin{equation}
        u_v=\frac1{\sqrt n}+O_r(n^{-3/2}).
        \label{eq-Perron-vect-uniform-T}
\end{equation} 
By the Perron--Frobenius theorem and the symmetry of $T_{n,r}$, the coordinate  $u_v$ is constant on
each part. We write $u_v=c_i$ for $v$ in part $i$. The eigenvalue equation at a
vertex of part $i$ reads $\mu c_i=\sum_{j\ne i}t_jc_j=C-t_ic_i$, where
$C:=\sum_j t_jc_j$. Hence, we get  
\begin{equation} \label{eq:coordinate}
c_i= \frac{C}{\mu+t_i}. 
\end{equation}
The part sizes of $T$ differ by at most one and satisfy $t_i=n/r+O_r(1)$ for all $i$. Since $\delta (T_{n,r})\le \mu \le \Delta (T_{n,r})$, we have  $\mu=\alpha n+O_r(1)$ and $ \mu+t_i=n+O_r(1)$. Then  
$  \frac{1}{\mu+t_i}=\frac{1}{n}\bigl(1+O_r(n^{-1})\bigr)$. 
Using normalization and $\sum_i t_i=n$ gives
\[
        1=\sum_v u_v^2=\sum_{i=1}^r t_ic_i^2=\sum_{i=1}^r\frac{t_iC^2}{(\mu+t_i)^2}
         =\frac{C^2}{n^2}\left(\sum_{i=1}^r t_i\right)\bigl(1+O_r(n^{-1})\bigr)
         =\frac{C^2}{n}\bigl(1+O_r(n^{-1})\bigr).
\]
Therefore $C^2=n+O_r(1)$, i.e.,  
$C=\sqrt n+O_r(n^{-1/2})$. Then 
\[
        c_i=\frac{C}{\mu+t_i}
           =\frac{\sqrt n+O_r(n^{-1/2})}{n+O_r(1)} 
           =\frac{1}{\sqrt n}+O_r(n^{-3/2}),
\]
which proves the estimate in \eqref{eq-Perron-vect-uniform-T}. 

First we prove the lower bound on $\lambda (Y_{n,r,q})$. 
By the Rayleigh quotient,
\[
        \lambda(Y_{n,r,q})
        \ge \bm{u}^T(A_T+M) \bm{u}
        =\mu+ \bm{u}^TM \bm{u}.
\]
Each added matching edge $xy$ contributes $2u_xu_y$ to $\bm{u}^TM \bm{u}$. By
\eqref{eq-Perron-vect-uniform-T},
\[
        2u_xu_y=\frac2n+O_r(n^{-2}).
\]
There are $q$ added edges, so
\begin{equation}
        \bm{u}^TM\bm{u}=\frac{2q}{n}+O_r\left(\frac q{n^2}\right)
             =\frac{2q}{n}+O_r(n^{-3/2}).
        \label{eq:uMu-matching}
\end{equation}
Thus
\begin{equation}
        \lambda(Y_{n,r,q})
        \ge \mu+\frac{2q}{n}+O_r(n^{-3/2}).
        \label{eq:Y-lower}
\end{equation} 
Now we prove the upper bound on $\lambda(Y_{n,r,q})$. 
Let $\bm{x} \in \mathbb{R}^n$ be any unit vector. Write
\[
        \bm{x}=a \bm{u}+ \bm{y},
        \qquad
        \bm{y}\perp \bm{u},
        \qquad
        a^2+\|\bm{y}\|^2=1.
\]
Put $\beta:=\|\bm{y}\|$. 
Since $T_{n,r}$ is a complete $r$-partite graph, $T_{n,r}$ has exactly one positive eigenvalue. Therefore
$ \bm{y}^TA_T \bm{y}\le 0$. 
It follows that $ \bm{x}^TA_T \bm{x}\le \mu a^2$. 
Also,
\[
        \bm{x}^TM \bm{x} 
        =a^2\bm{u}^TM \bm{u} 
        +2a\,\bm{y}^TM \bm{u} + \bm{y}^TM\bm{y}.
\]
Since $M$ is the adjacency matrix corresponding to the added matching, we get $\|M\|=1$. Hence
\[
        \bm{y}^TM\bm{y}\le \|M\|\| \bm{y} \|^2=\beta^2.
\]
Using $a^2=1-\beta^2$ and $|a|\le 1$, we obtain
\begin{align*}
        \bm{x}^T(A_T+M)\bm{x}
        &\le \mu a^2+a^2 \bm{u}^TM \bm{u}+2|a|\,|\bm{y}^TM\bm{u}|+\beta^2  \\
        &\le \mu+ \bm{u}^TM \bm{u}-(\mu+ \bm{u}^TM\bm{u}-1)\beta^2+2\beta\|M\bm{u}\|.
\end{align*}
Since $q=O_r(\sqrt{n})$, we have  $\bm{u}^TM\bm{u} =o(1)$ by (\ref{eq:uMu-matching}). 
For sufficiently large $n$, we get 
\[
        \mu+ \bm{u}^TM \bm{u} -1=\alpha n+o(n)>0.
\]
For $L>0$, the maximum of $-L\beta^2+2B\beta$ over $\beta\ge 0$ is $B^2/L$. Applying this
with
\[
        L:=\mu+ \bm{u}^TM \bm{u}-1,
        \qquad
        B:=\|M \bm{u}\|,
\]
we get
\begin{equation}
        \lambda(Y_{n,r,q})
        \le \mu+ \bm{u}^TM\bm{u}+
             \frac{\|M \bm{u}\|^2}{\mu+ \bm{u}^TM\bm{u}-1}.
        \label{eq:Y-upper-raw}
\end{equation}
It remains to estimate the last term. Because $M$ is a matching, $M \bm{u}$ is supported on the
$2q$ endpoints of the matching, and each nonzero coordinate is
$1/\sqrt n+O_r(n^{-3/2})$. Therefore
\[
        \|M\bm{u}\|^2
        =2q\left(\frac1n+O_r(n^{-2})\right)
        =O_r\left(\frac qn\right)
        =O_r(n^{-1/2}).
\]
The denominator in \eqref{eq:Y-upper-raw} is $\alpha n+o(n)$, so
\[
        \frac{\|M \bm{u}\|^2}{\mu+ 
        \bm{u}^TM\bm{u} -1}
        =O_r(n^{-3/2}).
\]
Combining this with \eqref{eq:uMu-matching} and \eqref{eq:Y-upper-raw} gives
\[
        \lambda(Y_{n,r,q})
        \le \mu+\frac{2q}{n}+O_r(n^{-3/2}).
\]
Together with the lower bound \eqref{eq:Y-lower}, this proves the lemma.
\end{proof}

\subsection{A sharp construction by adding a star}

To show the tightness of $\delta_r$ in Theorem \ref{thm-cliques}, we need to construct extremal examples with spectral radius at least $\lambda (Y_{n,r,q})$ for infinitely many $n$. 
Throughout this section, we assume that $ n=r m$. 

\begin{proposition} 
\label{prop:construction} 
    Assume that $r\ge 2$ and $\alpha = \frac{r-1}{r}$. 
    Suppose that $n=rm$ is sufficiently large  and $q=O_r(\sqrt{n})$. 
    Let $H=H_{n,r,q}$ be the graph obtained from the complete $r$-partite graph $ K_0:=K_r(m+1,m-1,m,\ldots,m)$ by adding a $q$-edge star into the partite set of size $m+1$. Then 
    \[   \lambda(H)
        \ge \alpha n-\frac{2\alpha}{n}+\frac{2q}{n}
              +\frac{q^2\alpha^{-1}}{n^2}+O_r(n^{-3/2}).  \]
\end{proposition}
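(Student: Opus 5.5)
Since the claim is a lower bound on $\lambda(H)$, we prove it with the Rayleigh quotient, using a test vector built from the Perron vector of $K_0$ and corrected near the star. By Theorem~\ref{thm:multipartite-expansion} with $a=(1,-1,0,\dots,0)$ and $B=2$, we have $\lambda(K_0)=\alpha n-\frac{2\alpha}{n}+O_r(n^{-5/4})$. The error $O_r(n^{-5/4})$ is too weak, but $B=O_r(1)$ here. We therefore redo the root computation for $F(z)$ in that proof with $|a_i|\le1$: now $F(0)=-\alpha B/n^2+O_r(n^{-3})$ and $F'=-1/n+O_r(n^{-2})$, which gives $\lambda(K_0)=\alpha n-\frac{2\alpha}{n}+O_r(n^{-2})$. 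As in the proof of Lemma~\ref{lem:Y-expansion}, the unit Perron vector $\bm u$ of $K_0$ satisfies $u_v=\frac{1}{\sqrt n}+O_r(n^{-3/2})$. Write $A(H)=A_0+M$, where $M$ is the adjacency matrix of the star with center $w$ and leaves $y_1,\dots,y_q$.

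The test vector is $\bm x=\bm u+t\,\bm e_w+\tau\sum_{j}\bm e_{y_j}$ for small parameters $t,\tau$, to be optimized. The key effect is that the center $w$ gains $q$ neighbours, so its Perron entry grows by roughly $(q/\sqrt n)/\lambda$. Since $w$ and all $y_j$ lie in the same part $V_1$, they have no edges among themselves in $K_0$. The diagonal-free structure therefore gives
\[ \bm x^TA(H)\bm x=\lambda(K_0)\bigl(1+O(\ldots)\bigr)\text{-terms}+2t(A_0\bm u)_w+2t\,(M\bm u)_w+2\tau\sum_j (A_0\bm u)_{y_j}+\bm u^TM\bm u+2tq\tau+\dots \]
The cleanest route is a first-order perturbation: set $\bm x=\bm u+\theta\,\bm v$ with $\bm v=M\bm u/\|M\bm u\|$ concentrated at $w$ (norm about $\sqrt{q/n}$) plus small leaf mass. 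Then
\[
\frac{\bm x^T(A_0+M)\bm x}{\bm x^T\bm x}\ \ge\ \lambda(K_0)+\bm u^TM\bm u+\frac{\|M\bm u\|^2 - O(\text{cross})}{\lambda(K_0)}+\dots,
\]
which is the matching lower-bound analogue of \eqref{eq:Y-upper-raw}. Concretely, take $\bm x=\bm u+\frac{1}{\mu}\,M\bm u$ with $\mu:=\lambda(K_0)$. We then compute $\bm x^T\bm x=1+\frac{2}{\mu}\bm u^TM\bm u+\frac{1}{\mu^2}\|M\bm u\|^2$. For the numerator we use $A_0\bm u=\mu\bm u$, which gives
\[
\bm x^TA_0\bm x=\mu+2\bm u^TM\bm u+\tfrac1{\mu^2}(M\bm u)^TA_0(M\bm u)
\quad\text{and}\quad
\bm x^TM\bm x=\bm u^TM\bm u+\tfrac2\mu\|M\bm u\|^2+\tfrac1{\mu^2}\bm u^TM^3\bm u .
\]
The vector $M\bm u$ is supported on $V_1$, which is independent in $K_0$, so $(M\bm u)^TA_0(M\bm u)=0$. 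Dividing out, the quotient equals $\mu+\bm u^TM\bm u+\frac{\|M\bm u\|^2}{\mu}+$ errors. The $\frac{2}{\mu}(\bm u^TM\bm u)^2$ cross terms cancel to order $O(q^2/n^3)=O(n^{-2})$, and $\bm u^TM^3\bm u/\mu^2=O(q^2/n^3)$.

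Finally we evaluate each term. We have $\bm u^TM\bm u=2q u_w u_y=\frac{2q}{n}+O_r(qn^{-2})$. For the norm, $\|M\bm u\|^2=(M\bm u)_w^2+\sum_j u_w^2=\frac{q^2+q}{n}+O_r(q^2n^{-2})$. Dividing by $\mu=\alpha n+O(1)$ gives $\frac{q^2\alpha^{-1}}{n^2}+O_r(n^{-3/2})$. Summing, we obtain exactly the claimed bound with error $O_r(n^{-3/2})$. The main obstacle is the bookkeeping of the normalization expansion, so that all cross terms are provably $O_r(n^{-3/2})$ given $q=O_r(\sqrt n)$. Refining the Perron-entry estimates to $O_r(n^{-3/2})$ precision, which works because the part sizes differ by $O(1)$, handles this.
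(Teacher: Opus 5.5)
Your argument is correct, but it uses a different test vector from the paper's. The paper works in the plane spanned by $\bm u$ and the normalized centered star-degree vector $\bm y$ (the function $d-\bar d$ on the part $U$ of size $m+1$). This $\bm y$ is chosen so that $\bm y\perp\bm u$ and $A(K_0)\bm y=0$, so the Rayleigh quotient of $(\bm u+t\bm y)/\sqrt{1+t^2}$ has no $A(K_0)$ cross terms. The remaining work is computing $\|\bm w\|^2=q^2+q-4q^2/(m+1)$ and $\bm y^TS\bm u=u_0\|\bm w\|$, and bounding $t^2\bm y^TS\bm y$ via $\|S\|=\sqrt q$.

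You instead take the first-order eigenvector correction $\bm x=\bm u+\mu^{-1}M\bm u$, which is not orthogonal to $\bm u$. In place of the orthogonality/kernel trick you use $A_0\bm u=\mu\bm u$ together with $(M\bm u)^TA_0(M\bm u)=0$, which holds because $M\bm u$ lives on the independent part $V_1$. With $e=\bm u^TM\bm u=2qu_0^2$, $f=\|M\bm u\|^2=(q^2+q)u_0^2$ and $\bm u^TM^3\bm u=2q^2u_0^2$, your expansions give the exact identity
\[
\frac{\bm x^TA(H)\bm x}{\bm x^T\bm x}-\Bigl(\mu+e+\frac f\mu\Bigr)=\frac{\mu^{-2}\bm u^TM^3\bm u-2e^2/\mu-3ef/\mu^2-f^2/\mu^3}{1+2e/\mu+f/\mu^2}=O_r(n^{-2}).
\]
Your ``dividing out'' step should state this explicitly. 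The heuristic displays before your concrete choice of $\bm x$ should be dropped, as should the remark that $\|M\bm u\|$ is about $\sqrt{q/n}$; it is about $q/\sqrt n$, which is the value your final computation correctly uses.

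What each route buys:
\begin{itemize}
\item \emph{Your route} is somewhat shorter. It needs no centering and no spectral-norm bound on the star, and every quantity is in closed form. It also shows directly that the second-order term in the upper bound \eqref{eq:upper} is attained up to lower order. It extends readily to any set of added edges inside a single part, where $\bm u^TM^3\bm u\ge 0$ only helps.
\item \emph{The paper's route} gives a genuinely two-dimensional problem decoupled from $A(K_0)$, which makes the optimal choice $t=\bm y^TS\bm u/\mu$ transparent.
\end{itemize}
Both proofs need the same sharpened estimate $\lambda(K_0)=\alpha n-2\alpha/n+O_r(n^{-2})$, which you derive exactly as the paper does.
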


By Theorem \ref{thm:multipartite-expansion}, we can get $\lambda (K_0) \approx \alpha n - \frac{2\alpha}{n}$. 
The remaining argument is to prove that adding a star with $q$ edges increases $\lambda (K_0)$ by about  $ \frac{2q}{n}  + \frac{q^2\alpha^{-1}}{n^2}$. 
Here, the second order term is  $\frac{q^2\alpha^{-1}}{n^2}$, 
which is different from that of adding a matching. 
To determine this term, we use a test
vector spanned by $\bm u$ and a second vector $\bm y$, which we get by centering
the star-degree function on the part that contains the star. This $\bm y$ is orthogonal to
$\bm u$ and is sent to zero by $A(K_0)$, but $\bm y^{T}S\bm u$ is large because of the star;
choosing the best vector in this plane then gives this missing term exactly.

\begin{proof} 
First, we apply Theorem \ref{thm:multipartite-expansion} to estimate the spectral radius of $K_0$.  
Comparing to the balanced $K_r(m,m,\ldots,m)$, the deviations are
$ 1,-1,0,\ldots,0$. 
Thus $  B=1^2+(-1)^2=2$.  
Since the deviations of $K_0$ are bounded ($\max_i|a_i|=1$ and $B=2$), the
estimates in the proof of Theorem \ref{thm:multipartite-expansion} sharpen to
$F(0)=-\tfrac{\alpha B}{n^2}+O_r(n^{-3})$ and $F'(z)=-\tfrac1n+O_r(n^{-2})$. 
Hence
\begin{equation*}
        \lambda(K_0)=\alpha n-\frac{2\alpha}{n}+O_r(n^{-2}).
\end{equation*}  
Let $S$ be the adjacency matrix of the added $q$-edge star.  Let $\bm{u}$ be the unit
Perron vector of $K_0$. Since the parts of $K_0$ differ from $n/r$ by only $O_r(1)$, as in (\ref{eq-Perron-vect-uniform-T}), the coordinates of the Perron vector $\bm{u}$ are almost equal, and they are given as  
\begin{equation}
        u_0 =\frac1{\sqrt n}+O_r(n^{-3/2}).
        \label{eq:u0-star}
\end{equation}
The first-order term of the added star is
\begin{equation}
        \bm{u}^TS\bm{u}=2q u_0^2=\frac{2q}{n}+O_r\left(\frac q{n^2}\right)
              =\frac{2q}{n}+O_r(n^{-3/2}).
        \label{eq:uSu-star}
\end{equation} 
From the above discussion, the Rayleigh quotient implies 
$\lambda (H)\ge  \alpha n - {2\alpha}/{n} + 2q/n + O_r(n^{-3/2})$. 
Thus, the difficulty is to show the second-order term $q^2\alpha^{-1}/n^2$ of the increment caused by the added star.  
To achieve this, we need to construct a new test vector instead of $\bm{u}$.

Let $U$ be the partite set in $K_0$ of size $|U|=m+1$,
let $c$ be the center vertex of the star, and let $L$ be the set of its $q$ leaves. 
Define the star-degree function $d$ on $U$ by $ d(c)=q$, $ d(x)=1 $ for $x\in L$, 
and $d(x)=0$ for $x\in U\setminus(\{c\}\cup L)$.  
Then $   \sum_{x\in U}d(x)=2q$. 
Let
\[
        \bar d:=\frac{2q}{|U|}=\frac{2q}{m+1}.
\]
Define a vector $\bm{w} \in \mathbb{R}^n$ by
\[
        w_x=\begin{cases}
             d(x)-\bar d, &\text{if}~ x\in U;\\
             0,           &\text{if}~ x\notin U.
             \end{cases}
\]
Then
\[
        \sum_{x\in U}w_x
        =\sum_{x\in U}d(x)-|U|\bar d
        =2q-(m+1)\frac{2q}{m+1}=0.
\]
Because $\bm{u}$ is constant restricted to $U$ and $\bm{w}$ is supported only on $U$, this gives $  \bm{w}\perp \bm{u}$. 
Since $\bm{w}$ is supported on one part and has coordinate sum zero within that part, the 
adjacency matrix $A(K_0)$ sends $\bm{w}$ to zero, that is,  $A(K_0)\bm{w}=0$.  
We next compute $\|\bm{w}\|^2$. Since $w_x=d(x)-\bar d$ on $U$,
\begin{align*}
        \|\bm{w}\|^2
        =\sum_{x\in U}(d(x)-\bar d)^2  
        =\sum_{x\in U}d(x)^2
          -2\bar d\sum_{x\in U}d(x)
          +|U|\bar d^2.
\end{align*}
Now
\[
        \sum_{x\in U}d(x)^2=q^2+q,
\]
while
\[
        2\bar d\sum_{x\in U}d(x)
        =2\cdot\frac{2q}{m+1}\cdot 2q
        =\frac{8q^2}{m+1},
\]
and
\[
        |U|\bar d^2
        =(m+1)\frac{4q^2}{(m+1)^2}
        =\frac{4q^2}{m+1}.
\]
Since $q=O(\sqrt n)$, we get 
\begin{equation}
        \|\bm{w}\|^2=q^2+q-\frac{4q^2}{m+1}=q^2+O(q).
        \label{eq:w-norm}
\end{equation} 
Let $ \bm{y}:=\frac{\bm{w}}{\| \bm{w}\|}$. 
Since $\bm{w}\perp \bm{u}$ and $A(K_0)\bm{w}=0$, we get 
\begin{equation}
        \|\bm{y}\|=1,
        \qquad
        \bm{y}\perp \bm{u},
        \qquad
        A(K_0) \bm{y} =0.
        \label{eq:y-properties}
\end{equation}

We compute $\bm{y}^TS \bm{u}$. Since $S$ is the adjacency matrix of the star and $u_x=u_0$ for all
$x\in U$, we have 
$ (S \bm{u})_x=u_0 \,d(x)$ if 
$x\in U$; 
and $(S \bm{u})_x=0$ if $x\notin U$. Hence
\[
        \bm{w}^TS \bm{u}=u_0\sum_{x\in U}w_xd(x).
\]
But
\begin{align*}
        \sum_{x\in U}w_xd(x)
          =\sum_{x\in U}d(x)^2-\bar d\sum_{x\in U}d(x)  
        =(q^2+q)-\frac{2q}{m+1}\cdot 2q 
        =q^2+q-\frac{4q^2}{m+1}.
\end{align*}
Therefore
$ \bm{w}^TS \bm{u}=u_0\| \bm{w} \|^2$ by \eqref{eq:w-norm}, 
and so
\begin{equation}
        \bm{y}^TS \bm{u}=u_0\| \bm{w} \|.
        \label{eq:ySu}
\end{equation} 
Let $  \mu:=\lambda(K_0)$ and $ t:={\bm{y}^TS \bm{u}}/{\mu}$. 
Then the vector
\[
        \bm{x}:=\frac{\bm{u}+t \bm{y}}{\sqrt{1+t^2}}
\]
has norm one. Therefore, by the Rayleigh quotient,
\begin{align} 
        \lambda(H)
        =\lambda(K_0+S)  
        \ge \bm{x}^T(A(K_0)+S) \bm{x}  =\frac{\mu+ \bm{u}^TS \bm{u}+2t\,\bm{y}^TS\bm{u} 
        +t^2\bm{y}^TS\bm{y}}{1+t^2},
        \label{eq:rayleigh-star}
\end{align}
where we used \eqref{eq:y-properties} to remove all cross terms involving $A(K_0)$.

Next, by \eqref{eq:u0-star}, \eqref{eq:w-norm}  and \eqref{eq:ySu},
\begin{align*}
        (\bm{y}^TS\bm{u})^2
        =u_0^2\|\bm{w}\|^2 =\left(\frac1n+O_r(n^{-2})\right)(q^2+O(q)) 
        =\frac{q^2}{n}+O_r(n^{-1/2}). 
\end{align*} 
Since $\mu=\alpha n+O_r(n^{-1})$, we get
\begin{align}
        \frac{(\bm{y}^TS\bm{u})^2}{\mu}
        =\frac{q^2/n+O_r(n^{-1/2})}{\alpha n+O_r(n^{-1})}  
        =\frac1\alpha\frac{q^2}{n^2}+O_r(n^{-3/2}) .
        \label{eq:star-second-order}
\end{align} 
We now simplify the right-hand side of \eqref{eq:rayleigh-star}. The added star is a
copy of $K_{1,q}$, so its adjacency matrix has spectral norm
$ \|S\|=\sqrt q=O_r(n^{1/4})$, 
and hence $|\bm{y}^TS \bm{y}|\le\|S\|\,\|\bm{y}\|^2=O_r(n^{1/4})$. By \eqref{eq:ySu},
\eqref{eq:u0-star} and \eqref{eq:w-norm},
we have $\bm{y}^TS \bm{u}=u_0\| \bm{w}\|=O_r(1)$. Since
$\mu=\alpha n+O_r(n^{-1})$,
we get $t={\bm{y}^TS\bm{u}}/{\mu}=O_r(n^{-1})$. 
Note that $t\,\bm{y}^TS\bm{u}=(\bm{y}^TS\bm{u})^2/\mu=\mu t^2$. So the numerator in \eqref{eq:rayleigh-star} regroups as
\[
        \mu+\bm{u}^TS\bm{u}+2t\,\bm{y}^TS\bm{u}+t^2\bm{y}^TS\bm{y}
        =\mu(1+t^2)+\bm{u}^TS\bm{u}+\frac{(\bm{y}^TS\bm{u})^2}{\mu}+t^2\bm{y}^TS\bm{y} .
\]
Dividing by $1+t^2$, we find that (\ref{eq:rayleigh-star}) becomes 
\begin{equation} \label{eq:after-regroup}
        \frac{\mu+\bm{u}^TS\bm{u}+2t\,\bm{y}^TS\bm{u}+t^2\bm{y}^TS\bm{y}}{1+t^2}
        =\mu+\frac{1}{1+t^2}\left(\bm{u}^TS\bm{u}+\frac{(\bm{y}^TS\bm{u})^2}{\mu}+t^2\bm{y}^TS\bm{y}\right).
\end{equation}
By \eqref{eq:uSu-star} and \eqref{eq:star-second-order}, we get 
$\bm{u}^TS\bm{u}=O_r(n^{-1/2})$ and $(\bm{y}^TS\bm{u})^2/\mu=O_r(n^{-1})$, while
\[
        |t^2\bm{y}^TS\bm{y}|\le t^2\|S\|=O_r(n^{-2})\cdot O_r(n^{1/4})=O_r(n^{-7/4}).
\]
Hence the parenthesis equals $\bm{u}^TS\bm{u}+(\bm{y}^TS\bm{u})^2/\mu+O_r(n^{-7/4})=O_r(n^{-1/2})$, and
multiplying by $\tfrac1{1+t^2}=1+O_r(n^{-2})$ alters it by only
$O_r(n^{-1/2})\cdot O_r(n^{-2})=O_r(n^{-5/2})$. Therefore (\ref{eq:after-regroup}) gives 
\begin{equation*}
        \lambda(H)
        \ge\mu+\bm{u}^TS\bm{u}+\frac{(\bm{y}^TS\bm{u})^2}{\mu}+O_r(n^{-7/4}).
\end{equation*} 
Using \eqref{eq:uSu-star} and \eqref{eq:star-second-order}, we obtain 
\[
        \lambda(H)
        \ge \mu+\frac{2q}{n}+\frac{q^2\alpha^{-1}}{ n^2}+O_r(n^{-3/2}).
\]
Recalling that $\mu = \lambda (K_0)= \alpha n- {2\alpha}/{n} + O_r(n^{-2})$, we 
obtain the desired bound.  
\end{proof}

\section{Spectral estimates for nearly Tur\'{a}n graphs}

\label{sec-1-8-and-1-7}

In this section, we first prove the second-order  spectral estimate in Theorem \ref{thm:local-spectral}. Second, we prove the continuous supersaturation in Theorem \ref{thm-continuous} by using Theorem \ref{thm:local-spectral} and Theorem \ref{thm-Y}.

\subsection{Proof of Theorem \ref{thm:local-spectral}}

Our goal is to bound the Rayleigh quotient $\bm{x}^TA(G)\bm{x}$ for an arbitrary unit vector $\bm{x}$. 
We write $A(G):=A(K)+E$, where $E$ has a $+1$ for each added class-edge and a
$-1$ for each deleted cross-edge.  
 Note that $A(K)$ has only one positive eigenvalue $\mu$ with the unit Perron vector $\bm{u}$.  
We denote  
$\bm{x}:=a\bm u+\bm y$ for some $\bm{y}$ with $\bm y\perp\bm u$, then bound the part from $A(K)$ by $\mu a^2$ and split the rest into a first-order term $e_0:=\bm u^TE\bm u$ and a second-order term coming from
$\|E\bm u\|^2/\mu$. The first term gives $2(s-h)/n$, and the second, after we bound $\|E\bm u\|^2$
by the degree-square sum (Lemma \ref{lem-deg-squ}), gives $(s^2+h^2)\alpha^{-1}/n^2$, where the
coefficient $\alpha^{-1}$ comes from the spectral radius $\mu\approx\alpha n$. 

\begin{proof}[Proof of Theorem \ref{thm:local-spectral}]
As stated above, 
we denote $A(G):=A_0+E$, where $A_0=A(K)$ and $E$ is a symmetric matrix with entries $+1$ corresponding to the
added class-edges and $-1$ corresponding to the deleted cross-edges. Let $\mu=\lambda(K)$ and let 
$\bm{u}$ be the positive unit Perron vector of $K$. 
By the identity \eqref{eq-DB} and $B_T=O_r(1)$, the hypothesis $D=O_r(\sqrt n)$ gives
$$B:=\sum_{i=1}^r \Big(n_i-\frac nr \Big)^2=O_r(\sqrt n),$$
 and then $\max_{i\in [r]} |n_i-\tfrac nr|=O_r(n^{1/4})$.
By Theorem \ref{thm:multipartite-expansion}, we have 
$$ \mu=\alpha n-\frac{\alpha B}{n}+O_r(n^{-5/4}), $$ 
which together with $B=O_r(\sqrt n)$ yields  
\begin{equation} \label{eq:mu}
\mu=\alpha n+O_r(n^{-1/2}). 
\end{equation}
Similar to (\ref{eq:coordinate}), 
we obtain 
$u_v= \frac{C}{\mu+n_i }$ for every $v\in V_i$ with $C>0$. Since $n_i=\frac{n}{r} + O_r(n^{1/4})$, 
we have $\mu+n_i= n+O_r(n^{1/4})$ and 
$   u_v=\frac{C}{\mu+n_i}=\frac{C}{n}\bigl(1+O_r(n^{-3/4})\bigr)$. 
 Then $\sum_v u_v^2=1$ forces 
 $$C=\sqrt n\,\bigl(1+O_r(n^{-3/4})\bigr).$$ 
Then for every $v\in V$, we have 
\begin{equation}
        u_v=\frac1{\sqrt n}\bigl(1+O_r(n^{-3/4})\bigr) .
        \label{eq:u-uniform-local}
\end{equation} 
Put $e_0:=\bm{u}^{T}E\bm{u}$. An added class-edge contributes $2u_xu_y$ and a deleted
cross-edge $-2u_xu_y$. By \eqref{eq:u-uniform-local}, we have 
$2u_xu_y=\frac2n+O_r(n^{-7/4})$. As there are $s$ added and $h$ deleted edges and
$s+h=O_r(\sqrt n)$, we conclude that 
\begin{equation} \label{eq-e0}
        e_0=\frac{2(s-h)}{n}+O_r\bigl((s+h)n^{-7/4}\bigr)
           =\frac{2(s-h)}{n}+O_r(n^{-5/4}).
\end{equation} 
By the definition of $E$, it follows that 
$$(E\bm{u})_v=\sum_{vw\ \mathrm{added}}u_w-\sum_{vw\ \mathrm{deleted}}u_w.$$ 
For a vertex $v$, let $d^{+}(v)$ and $d^{-}(v)$ be its degrees in the graphs formed by
the added class-edges and by the deleted cross-edges, respectively. 
By \eqref{eq:u-uniform-local}, we obtain 
\[
        (E\bm{u})_v=\frac1{\sqrt n}\bigl(d^{+}(v)-d^{-}(v)\bigr)
              +O_r\bigl(n^{-5/4}(d^{+}(v)+d^{-}(v))\bigr).
\]
We now bound $\|E\bm{u}\|^2$. 
Set $X:=\sum_v\bigl(d^{+}(v)^2+d^{-}(v)^2\bigr)$. Squaring and summing the above over all $v$, the main term
is at most $\frac1n\sum_v(d^{+}(v)-d^{-}(v))^2\le\frac1nX$, while
the squared error term is $O_r(n^{-5/2})X$, and 
the cross term is 
$O_r(n^{-7/4})X$.  Hence $\|E\bm{u}\|^2\le\frac{1}{n}X + O_r(n^{-7/4})X$. 

 Applying Lemma \ref{lem-deg-squ} 
to the graphs consisting of the added class-edges and deleted cross-edges, respectively, we get  $X\le(s^2+s)+(h^2+h)$. Therefore
\begin{equation} \label{eq:Eu}
        \|E\bm{u}\|^2\le\frac{s^2+h^2}{n}+O_r(n^{-1/2}).
\end{equation}
Let $\bm{x}$ be an arbitrary unit vector. We write 
$$\bm{x} :=a\bm{u}+\bm{y}$$
 with some vector $\bm{y}$ satisfying $\bm{y}\perp \bm{u}$, so that $a^2+\beta^2=1$,
where $\beta:=\|\bm{y}\|$. Every eigenvalue of $K$ other than
$\mu$ is nonpositive, so $\bm{y}^{T}A_0\bm{y}\le0$ and $\bm{x}^{T}A_0\bm{x}\le\mu a^2$. Moreover
$\|E\|\le \sqrt{2(s+h)}=O_r(n^{1/4})$. The norm bound gives  $\bm{y}^{T}E\bm{y}\le\|E\|\beta^2$, 
and the Cauchy--Schwarz inequality gives $|\bm{y}^{T}E\bm{u}|\le\beta\|E\bm{u}\|$. 
Using $a^2=1-\beta^2$ and $|a|\le1$, expanding $\bm{x}^TA(G)\bm{x}$ gives 
\[
   \lambda (G)\le \bm{x}^{T}A(G)\bm{x}\le \mu+e_0-(\mu+e_0-\|E\|)\beta^2+2\beta\|E\bm{u}\|.
\]
Combining with (\ref{eq:mu}) and (\ref{eq-e0}), 
we have $\mu+e_0-\|E\|=\alpha n+O_r(n^{1/4})>0$. 
The maximizer 
$\beta=\|E\bm{u}\|/(\mu+e_0-\|E\|)=O_r(n^{-3/4})$ lies in $[0,1]$. Maximizing over
$\beta\ge0$ gives
\begin{equation} \label{eq:upper}
        \lambda(G)\le \mu+e_0+\frac{\|E\bm{u}\|^2}{\mu+e_0-\|E\|}.
\end{equation}
 Since
$\mu+e_0-\|E\|=\alpha n\bigl(1+O_r(n^{-3/4})\bigr)$, using (\ref{eq:Eu}) gives  
\[
        \frac{\|E\bm{u}\|^2}{\mu+e_0-\|E\|}
        =\frac{(s^2+h^2)/n+O_r(n^{-1/2})}{\alpha n\bigl(1+O_r(n^{-3/4})\bigr)}
        =\frac{(s^2+h^2)\alpha^{-1}}{ n^2}+O_r(n^{-3/2}).
\]
By Theorem \ref{thm:multipartite-expansion} again, we have 
\[ \mu= \lambda (T_{n,r}) - \frac{2\alpha D}{n} + O(n^{-5/4}).  \]
Substituting $\mu$, $e_0$ and $\|E\bm{u}\|^2/(\mu+e_0-\|E\|)$ into
(\ref{eq:upper}) gives the desired  bound of $\lambda (G)$.
\end{proof}

\subsection{Continuous supersaturation: Proof of Theorem \ref{thm-continuous}} 

Next, we provide an immediate application of Theorem \ref{thm:local-spectral}, showing that the spectral supersaturation in Theorem \ref{thm-continuous} is a direct consequence of Theorem \ref{thm-Y} combined with Theorem \ref{thm:local-spectral}.

\begin{proof}[Proof of Theorem \ref{thm-continuous}]
Let $\delta_F>0$ be the constant 
in Theorem~\ref{thm-Y}, and write 
$\alpha:=1-\tfrac1r$. Adding $q$ disjoint edges to 
$T_{n,r}$ raises the spectral radius by $2q/n$ up to 
lower-order terms. By Theorem~\ref{thm:local-spectral}, for every
$q=O(\sqrt n)$, we have 
\begin{equation} \label{eq-Y-upper}
    \lambda(Y_{n,r,q})
    \le \lambda(T_{n,r})+\frac{2q}{n}
        +\frac{\alpha^{-1}q^2}{n^2}+O(n^{-5/4}).
\end{equation} 
Fix $C$ with $0<C\le\delta_F/\sqrt n$, and set
$q:=\lfloor \frac{1}{2}Cn\rfloor-K$, where $K=K(r,F)$ is a constant integer 
fixed below. Then
$  q\le\frac{1}{2}Cn \le\frac{1}{2} \delta_F\sqrt n$. 
So $q=O(\sqrt n)$ and (\ref{eq-Y-upper}) applies. Moreover, the two
error terms in (\ref{eq-Y-upper}) are at most
$\frac{1}{4n} \alpha^{-1}\delta_F^2 +O(n^{-5/4})\le K'/n$ for
some absolute constant $K'$. Hence
\[
    \lambda(Y_{n,r,q})
    \le\lambda(T_{n,r})
       +\frac{2}{n}\Bigl(\frac{Cn}{2}-K\Bigr)+\frac{K'}{n}
    =\lambda(T_{n,r})+C-\frac{2K-K'}{n}.
\]
Choosing $K\ge K'/2$ gives
$\lambda(Y_{n,r,q})\le\lambda(T_{n,r})+C\le\lambda(G)$.

Set $\eps_n:=-(K+1)/n$. If $q\ge1$, then
$1\le q\le\delta_F\sqrt n$, so applying Theorem~\ref{thm-Y}
gives $\texttt{\#}F(G)\ge q\,c(n,F)$. Combining with 
$q\ge\frac{Cn}{2}-(K+1)$, we get 
$ \texttt{\#}F(G)\ge \big(\frac{C}{2}+\eps_n\big)
        n\,c(n,F)$.  
If $q<1$, as $K$ is an integer, then $\frac{Cn}{2}<K+1$, so
$\bigl(\frac C2+\eps_n\bigr)n=\frac{Cn}{2}-(K+1)<0$ and the desired 
inequality holds trivially. In either case, we have 
$\texttt{\#}F(G)\ge\bigl(\frac C2 - \frac{K+1}{n}\bigr)n\,c(n,F)=(\frac{C}{2} - o(1))n\, c(n,F)$, as needed. 
\end{proof}

\section{The sharp threshold for cliques} 

\label{sec:clique} 

In this section, we prove Theorem~\ref{thm-cliques}. 
To start with, we show the tightness of $\delta_r= (1-\frac{1}{r})\sqrt{2}$.

\begin{example}  \label{exam-clique}
For any fixed $\varepsilon>0$ and infinitely many $n$, 
let $q$ be an integer with $(\delta_r +\eps)\sqrt{n} \le q =O(\sqrt{n})$, 
 there exists an $n$-vertex graph $H$ such that 
$  \lambda(H)>\lambda(Y_{n,r,q})$ and 
$   \texttt{\#} K_{r+1}(H)<q\,c(n,K_{r+1})$. 
\end{example}

\begin{proof}
Let $H$ be the graph defined in  Proposition \ref{prop:construction}. 
Each copy of $K_{r+1}$ using an added star edge must choose one vertex from each of the other
$r-1$ parts. These other parts have sizes
$m-1,m,\ldots,m$. 
Therefore, each added star edge lies in exactly $ (m-1)m^{r-2}$ 
copies of $K_{r+1}$. Since a copy of $K_{r+1}$ in an $r$-partite base graph plus internal edges
has exactly one internal edge, these copies are counted once for each star edge. Hence 
\[
     \texttt{\#} K_{r+1}(H)=  q(m-1)m^{r-2}<qm^{r-1}=q\,c(n,K_{r+1}).
\]
By Lemma \ref{lem:Y-expansion}, and since $T_{n,r}$ is exactly balanced when
$n=rm$,
\begin{equation*}
        \lambda(Y_{n,r,q})
        =\alpha n+\frac{2q}{n}+O_r(n^{-3/2}).
\end{equation*}
Applying Proposition \ref{prop:construction}, we obtain
   \[   \lambda(H)
        \ge \alpha n-\frac{2\alpha}{n}+\frac{2q}{n}
              +\frac{q^2\alpha^{-1}}{n^2}+O_r(n^{-3/2}).  \]
    Combining with the above two estimates, we get 
\begin{equation*}
        \lambda(H)-\lambda(Y_{n,r,q})
        \ge -\frac{2\alpha}{n}+\frac{q^2\alpha^{-1}}{n^2}+O_r(n^{-3/2}).
\end{equation*} 
Since $  q =(\delta_r+\varepsilon)\sqrt n$ and $\delta_r=\sqrt{2} (1- \frac{1}{r}) =\sqrt{2} \alpha$, we have 
\[
        \frac{q^2\alpha^{-1}}{n^2}
        = \alpha^{-1}(\delta_r+\varepsilon)^2\frac1n
        > \alpha^{-1} \delta_r^2\frac1n
        =\frac{2\alpha}{n}.
\]
Thus, we obtain $ \lambda(H)>\lambda(Y_{n,r,q})$ for all sufficiently large
$n$.  
\end{proof}

\subsection{Stability and counting for cliques}

Applying Theorem \ref{thm:FLLM}, 
we get the following structural reduction.

\begin{lemma} 
\label{lem:reduction}
Fix $r\ge 2$, $\varepsilon>0$, and let $n$ be sufficiently large. Suppose that
$  1\le q\le (\delta_r -\varepsilon)\sqrt n$ 
and suppose that $G$ is an $n$-vertex graph satisfying
$$  \lambda(G)\ge \lambda(Y_{n,r,q}) $$
and
$$   \texttt{\#} K_{r+1}(G) \le q\,c(n,K_{r+1}). $$
Then there exists 
$  K=K_r(n_1,\ldots,n_r)$ 
such that $G$ is obtained from $K$ by adding $s$ class-edges and deleting $h$ cross-edges, 
where $s,h=O_r(q)$. Moreover, 
$ D:=e(T_{n,r})-e(K) =O_r(q)$. 
\end{lemma}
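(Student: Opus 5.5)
The plan is to apply Theorem~\ref{thm:FLLM} directly with $F=K_{r+1}$ and a fixed $\varepsilon_1\in(0,1/2)$, say $\varepsilon_1=1/4$. I first check its two hypotheses.

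The spectral hypothesis is $\lambda(G)\ge\lambda(T_{n,r})$. This holds because $Y_{n,r,q}$ contains $T_{n,r}$ as a spanning subgraph, so by Perron--Frobenius monotonicity $\lambda(G)\ge\lambda(Y_{n,r,q})\ge\lambda(T_{n,r})$. The counting hypothesis $\texttt{\#}K_{r+1}(G)\le (q+\varepsilon_1)c(n,K_{r+1})$ is immediate from $\texttt{\#}K_{r+1}(G)\le q\,c(n,K_{r+1})$. The range condition $1\le q\le\delta_F n$ holds for large $n$, since $q\le(\delta_r-\varepsilon)\sqrt n=o(n)$.

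Theorem~\ref{thm:FLLM} then yields a complete $r$-partite graph $K=K_r(n_1,\dots,n_r)$ such that $G$ is obtained from $K$ by adding $s$ class-edges and deleting $h$ cross-edges. It also gives $h\le 2s\le 4q/\eta_F$, so $s,h=O_r(q)$ with the implied constant depending only on $\eta_{K_{r+1}}$, hence only on $r$.

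It remains to bound $D=e(T_{n,r})-e(K)$, which is the only step needing a short computation.

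\emph{Upper bound on $D$.} Write $n_i=n/r+a_i$. As in \eqref{eq-DB}, $D=\frac12(B-B_T)\le \frac12 B$ with $B=\sum_i a_i^2$. Theorem~\ref{thm:FLLM} gives $|n_i-n_j|\le 4\sqrt s$ for all $i\ne j$. Since $\sum_i a_i=0$, each $a_i$ is an average of the differences $n_i-n_j$, so $|a_i|\le \max_{j}|n_i-n_j|\le 4\sqrt s$. Hence $B\le 16rs$, and therefore $D\le 8rs=O_r(q)$.

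\emph{Lower bound on $D$.} $D\ge 0$, because $T_{n,r}$ maximizes the number of edges among $n$-vertex complete $r$-partite graphs. Thus $|D|=O_r(q)$.

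I expect no real obstacle: everything is inherited from Theorem~\ref{thm:FLLM}. The only points needing care are the monotonicity step $\lambda(Y_{n,r,q})\ge\lambda(T_{n,r})$ and the conversion of the pairwise bound on part sizes into the bound on $B$ and hence on $D$. Since $q=O(\sqrt n)$, the conclusion also supplies the hypotheses $s,h,D=O_r(\sqrt n)$ of Theorem~\ref{thm:local-spectral} for the later argument.
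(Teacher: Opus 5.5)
Your proposal is correct and follows the paper's proof. The steps match: apply Theorem~\ref{thm:FLLM} with $F=K_{r+1}$, convert $|n_i-n_j|\le 4\sqrt s$ into $B\le 16rs$, and conclude $D\le B/2=O_r(q)$ via \eqref{eq-DB}. The only difference is cosmetic: you get $\lambda(Y_{n,r,q})\ge\lambda(T_{n,r})$ from spanning-subgraph monotonicity rather than from Lemma~\ref{lem:Y-expansion}, which is equally valid and slightly simpler.
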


\begin{proof}
By Lemma~\ref{lem:Y-expansion}, we have 
$\lambda(Y_{n,r,q})=\lambda(T_{n,r})+2q/n
+O_r(n^{-3/2})>\lambda(T_{n,r})$
for all sufficiently large $n$ (using $q\ge1$). Hence, 
the hypothesis gives $\lambda(G)\ge\lambda(T_{n,r})$. 
Since $q\le(\delta_r-\varepsilon)\sqrt n\le\delta_F n$ and $\texttt{\#} K_{r+1}(G)\le q\, c(n,K_{r+1})$,  
applying Theorem~\ref{thm:FLLM} with $F=K_{r+1}$, we obtain a complete 
$r$-partite graph $K=K_r(n_1,\ldots,n_r)$ from which $G$ is built by adding
$s$ class-edges and deleting $h$ cross-edges, where $ h\le 2s = O_r(q)$ and 
$ |n_i-n_j|\le 4\sqrt s $ for all $i\neq j$.  
Write $n_i=n/r+a_i$ with $\sum_i a_i=0$. Since $|n_i-n_j|\le 4\sqrt s$, each $|a_i|\le 4\sqrt s$, so
$B:=\sum_i a_i^2\le 16rs=O_r(s)$. 
By \eqref{eq-DB}, we get 
$D=\tfrac12(B-B_T)$ with $B_T=O_r(1)\ge 0$, hence
$    D\le\tfrac12 B=O_r(s)=O_r(q)$, as needed. 
\end{proof}

The next lemma shows that, in the multipartite setting, each added class-edge
creates about $c(n,K_{r+1})$ copies of $K_{r+1}$, up to losses from imbalance and from
deleted cross-edges.

\begin{lemma}
\label{lem:local-counting}
Let $K=K_r(n_1,\ldots,n_r)$ be an $n$-vertex complete $r$-partite graph, and let $G$ be
obtained from $K$ by adding $s$ class-edges and deleting $h$ cross-edges. Put
$D:=e(T_{n,r})-e(K)$ and assume $s,h,D=O_r(\sqrt n)$. Then there is a constant $C_r>0$,
depending only on $r$, such that
\begin{equation*}
        \texttt{\#} K_{r+1}(G)\ge s\,c(n,K_{r+1})-C_r\,s(\sqrt D+h+1)\,n^{r-2}. 
\end{equation*}
\end{lemma}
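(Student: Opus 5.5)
Because the bound only concerns cliques through added class-edges, we will count only those copies of $K_{r+1}$ that consist of an added class-edge $xy$ inside some part $V_i$ together with one vertex from each of the other $r-1$ parts, all adjacent in $G$. Distinct added edges give distinct cliques: a $K_{r+1}$ formed this way has exactly one pair inside $V_i$, and the transversal vertices lie in different parts. So it suffices to show that each added class-edge $xy\subseteq V_i$ lies in at least $c(n,K_{r+1})-C_r(\sqrt D+h+1)n^{r-2}$ such cliques, and then sum over the $s$ added edges.

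Fix an added edge $xy$ in $V_i$. In $K$ itself, the number of transversals $(v_j)_{j\ne i}$ with $v_j\in V_j$ is $\prod_{j\neq i} n_j$, and every such transversal together with $x,y$ spans a $K_{r+1}$. Deleting the $h$ cross-edges destroys transversals in two ways. If the deleted edge is incident to $x$ or $y$, it fixes one transversal vertex, so it kills at most $\prod_{j\ne i,j_0}n_j=O_r(n^{r-2})$ transversals. If it joins two transversal parts, it fixes two vertices, so it kills $O_r(n^{r-3})\le O_r(n^{r-2})$. Hence the loss is at most $C h n^{r-2}$. We use only deletions and never need to worry about added edges destroying anything, since adding edges only creates cliques.

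It then remains to compare $\prod_{j\ne i}n_j$ with $c(n,K_{r+1})$. In $T_{n,r}$ with part sizes $t_1,\dots,t_r$, we have $c(n,K_{r+1})=\min_i\prod_{j\ne i}t_j$, attained by adding the edge into a largest part. Write $n_j=n/r+a_j$, so that $B=\sum a_j^2=2D+B_T$ by \eqref{eq-DB}, and therefore $|a_j|\le\sqrt{2D+O_r(1)}=O_r(\sqrt D+1)$. Expanding the product, $\prod_{j\ne i}n_j=(n/r)^{r-1}+(n/r)^{r-2}\sum_{j\ne i}a_j+O_r\bigl(B\,n^{r-3}\bigr)$. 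Since $B=O_r(\sqrt n)$, the quadratic and higher terms are $O_r(n^{r-5/2})=O_r(n^{r-2})$; here we use $|a_j|=O_r(n^{1/4})$ so that the higher-order terms are dominated as well. The linear term is at least $-(r-1)\max_j|a_j|\,(n/r)^{r-2}\ge -C(\sqrt D+1)n^{r-2}$. The same expansion applied to the Turán parts, where $|b_j|\le1$, gives $c(n,K_{r+1})\le (n/r)^{r-1}+O_r(n^{r-2})$. Subtracting yields $\prod_{j\ne i}n_j\ge c(n,K_{r+1})-C(\sqrt D+1)n^{r-2}$.

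Combining the two estimates and summing over the $s$ added edges gives the lemma with a suitable $C_r$. The only step requiring care is the imbalance comparison, namely controlling the linear term in $a_j$ by $\sqrt D$ via $B=2D+B_T$, rather than by $D$. This is routine once \eqref{eq-DB} is in hand.
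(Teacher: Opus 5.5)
Your proposal is correct and follows essentially the same route as the paper: count the copies of $K_{r+1}$ formed by one added class-edge plus a transversal of the other parts, compare $\prod_{j\ne i} n_j$ with $c(n,K_{r+1})$ via $B=2D+B_T$, and charge each deleted cross-edge at most $O_r(n^{r-2})$ lost copies per class-edge. Your bound $\max_j|a_j|=O_r(\sqrt D+1)$ is slightly more careful than the paper's $O_r(\sqrt D)$, which implicitly needs the $+1$ when $D$ is small, since $B_T$ can be nonzero even when $D=0$.
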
 

\begin{proof}
Let $V_1,\ldots,V_r$ be the parts of $K$, $|V_i|=n_i$. Call a copy of $K_{r+1}$
\emph{canonical} if it has two vertices in one part and one vertex in each of the others.
Its $r+1$ vertices then form exactly one within-part pair, necessarily a class-edge, every
other pair being a cross-edge. So each canonical copy uses exactly one class-edge, and it is a copy in $G$
precisely when none of its cross-edges has been deleted. Hence, 
summing over the class-edges, we count each canonical copy once.  
We therefore count canonical copies that use only cross-edges of $K$ and subtract
those meeting a deleted cross-edge; as $\texttt{\#}K_{r+1}(G)$ is at least the number of survivors, the
result follows.

Fix a class-edge $e\subseteq V_i$. Each canonical copy of $K_{r+1}$ through $e$ is determined by picking one vertex
from each $V_j$ with $j\ne i$, giving $\prod_{j\ne i}n_j$ of them. By \eqref{eq-DB}, we have 
$\sum_i(n_i-\tfrac nr)^2=O_r(D)$, so $\max_{i} |n_i -\tfrac nr| =O_r(\sqrt D)$. The sizes of parts of 
the Tur\'an graph $T_{n,r}$ satisfy $t_i=\tfrac nr+O_r(1)$. Therefore, we conclude that 
\[
        \prod_{j\ne i}n_j=\Bigl(\frac nr\Bigr)^{r-1}+O_r\bigl( \sqrt D\cdot n^{r-2}\bigr) 
\]
and
\[   c(n,K_{r+1})=\Bigl(\frac nr\Bigr)^{r-1}+O_r(n^{r-2}). \]
Therefore, we have $\prod_{j\ne i}n_j\ge c(n,K_{r+1})-C_r(\sqrt D+1)n^{r-2}$. Summing over the $s$
class-edges, the number of such canonical copies is at least
$s\,c(n,K_{r+1})-C_r\,s(\sqrt D+1)n^{r-2}$.

Fix a deleted cross-edge $xy$ with $x\in V_a$, $y\in V_b$ ($a\ne b$), and a class-edge $e$; a
canonical copy through $e$ containing $xy$ contains both $x$ and $y$. If $e\subseteq V_a$,
then $x$ must be an endpoint of $e$ and $y$ the chosen vertex of $V_b$, leaving the $r-2$
parts other than $V_a,V_b$ free (the case $e\subseteq V_b$ is symmetric); otherwise $e,x,y$
are all determined, leaving $r-3$ parts free. Either way there are $O_r(n^{r-2})$ such copies.
Summing over the $s$ class-edges and $h$ deleted cross-edges, at most $O_r(h\,s\,n^{r-2})$ canonical copies are lost. 
Consequently,  we obtain 
$\texttt{\#} K_{r+1}(G)\ge s\,c(n,K_{r+1})-C_r\,s(\sqrt D+1)n^{r-2}-O_r(h\,s\,n^{r-2})$, which
after enlarging the constant $C_r$ gives the desired bound of $\texttt{\#} K_{r+1}(G)$.
\end{proof}

\subsection{Proof of Theorem \ref{thm-cliques}}

We prove the lower bound of Theorem \ref{thm-cliques}. 
For any small $\varepsilon >0$, suppose that $1\le q\le(\delta_r-\varepsilon)\sqrt n$ and $G$  is an $n$-vertex graph with 
$\lambda(G)\ge\lambda(Y_{n,r,q})$ and $\texttt{\#} K_{r+1}(G)\le q\,c(n,K_{r+1})$. Our goal is to prove that $G\in \mathcal{T}_{n,r,q}$, that is, $G$ is obtained from the Tur\'{a}n graph $T_{n,r}$ by adding $q$ class-edges.   
By Lemma \ref{lem:reduction}, $G$ is obtained from 
a complete $r$-partite graph $K=K_r(n_1,\ldots,n_r)$ by adding $s$ class-edges and deleting $h$ cross-edges. Writing
$D:=e(T_{n,r})-e(K)\ge 0$, we have 
\[
        s,h,D=O_r(q)=O_r(\sqrt n).
\]
Denote $\alpha:=1- \frac{1}{r}$. 
Combining with Theorem \ref{thm:local-spectral}, 
we get  
   \[     \lambda(G)\le \lambda(T_{n,r})-\frac{2\alpha D}{n}+\frac{2(s-h)}{n}
        +\frac{(s^2+h^2)\alpha^{-1}}{n^2}+o(n^{-1}). \]  
Lemma \ref{lem:Y-expansion} gives
$\lambda(Y_{n,r,q})=\lambda(T_{n,r})+2q/n+O_r(n^{-3/2})$. Then 
\begin{equation}      
\label{eq:key-diff}
        0\le \lambda(G)-\lambda(Y_{n,r,q})
        \le \frac{2(s-q-h)}{n}-\frac{2\alpha D}{n}
            +\frac{(s^2+h^2)\alpha^{-1}}{n^2}+o(n^{-1}). 
\end{equation} 
Next, we divide the remaining arguments into three cases. 

\subsection*{Case 1. $s\le q-1$}
Since $h=O_r(\sqrt n)$, for large $n$, we get 
\[
        -\frac{2h}{n}+\frac{h^2 \alpha^{-1}}{n^2}=\frac hn 
        \Bigl(\frac{h\alpha^{-1}}{n}-2\Bigr)\le 0.
\]
Dropping the non-positive terms involving $h$ and $D$, we see that \eqref{eq:key-diff} gives
$$ \lambda(G)-\lambda(Y_{n,r,q})\le 
\frac{2(s-q)}{n} + \frac{s^2\alpha^{-1}}{n^2}+ o(n^{-1}). $$ 
Since the right-hand side above is increasing in
$s$ and $q\le (\delta_r -\varepsilon)\sqrt{n}$, we get 
\[
        \lambda(G)-\lambda(Y_{n,r,q})
        \le -\frac2n+\frac{(q-1)^2 \alpha^{-1}}{n^2}+o(n^{-1})
        \le \frac{-2 + (\delta_r- \varepsilon)^2 \alpha^{-1}}{n} + o(n^{-1})<0, 
\]
where the last inequality holds since $\delta_r= \sqrt{2}\alpha$. This contradicts $\lambda(G)\ge\lambda(Y_{n,r,q})$.

\subsection*{Case 2. $s=q$}
In this case, the inequality \eqref{eq:key-diff} reads
$$
\lambda(G)-\lambda(Y_{n,r,q})\le \frac{-2h}{n} - \frac{2\alpha D}{n}+\frac{(q^2+h^2)\alpha^{-1}}{n^2}+o(n^{-1}). $$
If $h=0$ and $D=0$, then $e(K)=e(T_{n,r})$, so $K$ is balanced, i.e., $K=T_{n,r}$. Thus, $G$ is obtained from the Tur\'{a}n graph  $T_{n,r}$ by adding $q$  class-edges, which is the desired extremal case.

Next, we assume that $h\ge1$ or $D\ge1$. In either case, we have 
\[
        -\frac{2h}{n}-\frac{2\alpha D}{n}+ \frac{h^2 \alpha^{-1}}{n^2}\le-\frac{2\alpha}{n}+o(n^{-1}).
\]
Indeed, if $D\ge1$, this follows from $-2\alpha D/n\le-2\alpha/n$ and $-2h/n+h^2 \alpha^{-1}/n^2\le0$. If $h\ge1$, then $-2h/n+h^2\alpha^{-1}/n^2$ is decreasing on $h\in [1,O_r(\sqrt n\, )]$, so its value is at most $-2/n+\alpha^{-1}/n^2$, which is at most $-2\alpha/n+o(n^{-1})$ because $\alpha = 1- \frac{1}{r} <1$. Therefore,  we get 
\[
        \lambda(G)-\lambda(Y_{n,r,q})\le-\frac{2\alpha}{n} 
        + \frac{q^2 \alpha^{-1}}{n^2}+o(n^{-1})
      <0
\]
for large $n$, again a contradiction.

\subsection*{Case 3. $s=q+\ell$ with $\ell\ge1$}
The assumption gives $\texttt{\#} K_{r+1}(G)\le q\,c(n,K_{r+1})$. 
By Lemma \ref{lem:local-counting}, we have 
\[
       \texttt{\#} K_{r+1}(G) \ge s\,c(n,K_{r+1})-C_r\,s(\sqrt D+h+1)n^{r-2}. 
\]
Combining with the two bounds,  we obtain $\ell\,c(n,K_{r+1})\le C_r\,s(\sqrt D+h+1)n^{r-2}$, which together with 
$c(n,K_{r+1})=(n/r)^{r-1}+O_r(n^{r-2})$ yields 
$ \sqrt D+h+1\ge \kappa \frac{\ell n}{s}$ for some $\kappa=\kappa(r)>0$.
Since $D=e(T_{n,r})-e(K)$ is a non-negative integer, we have $D \ge \sqrt{D} $. Then  
\[
        \frac{2h+2\alpha D}{n}\ge 
       \frac{ 2\alpha h+2\alpha\sqrt D}{n}  \ge  \frac{2\alpha \kappa \ell}{s} - \frac{2\alpha}{n}.   
\]
Substituting into \eqref{eq:key-diff}, where $s-q=\ell$ and
$\frac{(s^2+h^2)\alpha^{-1}}{n^2} =O_r(n^{-1})$, we get 
\[
        \lambda(G)-\lambda(Y_{n,r,q})
        \le \frac{2\ell}{n}- \frac{2\alpha \kappa \ell}{s}+O_r(n^{-1}).
\]
As $s=O_r(\sqrt n\,)$ and $\ell \ge 1$, we have $\frac{2\ell}{n} < \frac{\alpha\kappa \ell}{s}$ for large $n$, so
$\frac{2\ell}{n} - \frac{2\alpha\kappa\ell}{s} < - 
\frac{\alpha \kappa\ell}{s}$. This term is of order $n^{-1/2}$ and dominates the $O_r(n^{-1})$ remainder, so
$\lambda(G)<\lambda(Y_{n,r,q})$ for large $n$, a contradiction.

\medskip
In conclusion, we obtain that for every integer $1\le  q\le(\delta_r-\varepsilon)\sqrt n$, if $G$ is a graph with 
$\lambda(G)\ge\lambda(Y_{n,r,q})$ and 
 $\texttt{\#} K_{r+1}(G)\le q\,c(n,K_{r+1})$, then $s=q$, and $G$ is obtained from $T_{n,r}$ by adding $q$ edges. 
Each class-edge added to $T_{n,r}$ lies in at least $c(n,K_{r+1})$ copies of $K_{r+1}$, and distinct class-edges give distinct copies. 
Hence, we conclude that $G\in \mathcal{T}_{n,r,q}$ and $\texttt{\#} K_{r+1}(G) =  q\,c(n,K_{r+1})$.

\section{The sharp threshold for odd cycles}

\label{sec:odd}

First of all, we show that the threshold $\delta_{\rm opt} =1/\sqrt{2}$ in Theorem \ref{thm-odd-cycles} is best possible. 

\begin{example}\label{exam-odd-cycle}
Fix $\varepsilon>0$ and let $N$ be sufficiently
large. Let $q$ be an integer with
$(1+\varepsilon)\sqrt N\le q\le C \sqrt N$ for some constant $C$, and let
$H_{N,q}$ be obtained from $K_{N+1,N-1}$ by adding a
star with $q$ edges inside the part of size $N+1$.
Then $\lambda(H_{N,q})>\lambda(Y_{2N,2,q})$ and
$\texttt{\#}\,C_{2k+1}(H_{N,q})<q\,c(2N,C_{2k+1})$.
\end{example} 

\begin{proof}
By Proposition \ref{prop:construction}, we have 
\[
        \lambda(H_{N,q})
        \ge
        N+\frac qN+\frac{q^2-N}{2N^2}+O(N^{-3/2}).
\]
Applying Lemma \ref{lem:Y-expansion}, it follows that 
\[
        \lambda(Y_{2N,2,q})=N+\frac qN+O(N^{-3/2}).
\]
Therefore, we get 
\[
        \lambda(H_{N,q})-\lambda(Y_{2N,2,q})
        \ge
        \frac{q^2-N}{2N^2}+O(N^{-3/2}).
\]
If $q\ge(1+\eps)\sqrt N$, then
$q^2-N\ge \bigl( 2\varepsilon + \varepsilon^2 \bigr)N$. 
So the main term above is a positive constant times $N^{-1}$, while the error term is $O(N^{-3/2})$.  Thus, $ \lambda(H_{N,q})>\lambda(Y_{2N,2,q})$ for all sufficiently large $N$. 

Write $L$ and $R$ for the two parts of $K_{N+1,N-1}$, with $|L|=N+1$ and $|R|=N-1$. The $q$ star edges lie in $L$ and share a common center $v$. Call an edge of $H_{N,q}$ a \emph{cross edge} if it joins $L$ and $R$, and a \emph{star edge} otherwise.
First, we see that 
each copy of $C_{2k+1}$ uses exactly one star edge. 
Walk once around such a cycle. A cross edge switches the side between $L$ and $R$, 
while a star edge stays on the same side. Since the walk returns to its start, the number of switches is even, so the cycle uses an even number of cross edges. As the length equals the number of cross edges plus  the number of star edges, the cycle uses an \emph{odd} number of star edges. On the other hand, every vertex of a cycle has degree $2$, so the center $v$ lies on at most two of the used star edges; hence the cycle uses at most two. The odd number that is at most two equals one.

Fix a star edge $xy$, with $x,y\in L$. A copy of $C_{2k+1}$ through $xy$ consists of $xy$ together with a path of length $2k$ from $x$ to $y$ alternating between $L$ and $R$: $ x,b_1,a_1,b_2, \ldots,a_{k-1},b_k,y$, 
where $b_1,\ldots,b_k\in R$ are distinct and $a_1,\ldots,a_{k-1}\in L\setminus\{x,y\}$ are distinct. Each such copy corresponds to exactly one such sequence (read from $x$ to $y$). A sequence is fixed by an ordered choice of the $b_i$ from $R$ ($N-1$ vertices) and an ordered choice of the $a_i$ from $L\setminus\{x,y\}$ (also $N-1$ vertices), giving $(N-1)_k\,(N-1)_{k-1}$ copies through $xy$. Summing over the $q$ star edges, we get 
\[
    \texttt{\#}\, C_{2k+1}(H_{N,q})=q\,(N-1)_k\,(N-1)_{k-1}.
\]
Similarly, by adding an edge inside a part of $K_{N,N}$, we have 
$$c(2N,C_{2k+1})=(N)_k\,(N-2)_{k-1}. $$ 
Hence 
\[
    \frac{(N-1)_k\,(N-1)_{k-1}}{(N)_k\,(N-2)_{k-1}}
    =\frac{N-k}{N}\cdot\frac{N-1}{N-k}
    =\frac{N-1}{N}<1,
\]
so $(N-1)_k(N-1)_{k-1}<c(2N,C_{2k+1})$. Multiplying by $q$ gives $\texttt{\#}\,{C_{2k+1}}(H_{N,q})<q\,c(2N,C_{2k+1})$.
\end{proof}

\subsection{Stability and counting for odd cycles}

Applying Theorem \ref{thm:FLLM}, 
we get the following stability reduction.

\begin{lemma} \label{lem-stability-odd}
Fix $\eps>0$ and suppose that 
$ n=2N+\sigma$, where $\sigma\in\{0,1\}$,  
and $q$ is an integer such that $   1\le q\le (1-\eps)\sqrt N$. If $G$ is an $n$-vertex graph with 
\[
        \lambda(G)\ge \lambda(Y_{2N+\sigma,2,q}),
\]
and 
\[
       \texttt{\#}\,{C_{2k+1}}(G)\le q\,c(n,C_{2k+1}),
\]
then 
there is a partition $ V(G)=A\sqcup B$
such that $G$ is obtained from the complete bipartite graph $K_{|A|,|B|}$ by adding $s$ edges inside the two parts and deleting $h$ cross-edges, where 
$ s,h=O_k(q)$. Moreover, if we write $ |A|=N+\sigma+a$ and $ |B|=N-a$, where $a \ge 0$, then $a=O_k(\sqrt{s})$. 
\end{lemma}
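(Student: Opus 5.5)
The plan is to mirror the proof of Lemma~\ref{lem:reduction}, applying Theorem~\ref{thm:FLLM} with $F=C_{2k+1}$ and $r=2$, and then to translate its conclusion into the bipartite language of the statement.

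First, we check the spectral hypothesis of Theorem~\ref{thm:FLLM}. By Lemma~\ref{lem:Y-expansion} with $r=2$ and $q=O(\sqrt n)$,
\[
\lambda(Y_{n,2,q})=\lambda(T_{n,2})+\frac{2q}{n}+O(n^{-3/2}),
\]
and this exceeds $\lambda(T_{n,2})$ for large $n$ because $q\ge 1$. Hence $\lambda(G)\ge\lambda(T_{n,2})$.

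Next, we check the counting hypothesis and the range of $q$. The assumption $\texttt{\#}C_{2k+1}(G)\le q\,c(n,C_{2k+1})$ immediately gives $\texttt{\#}C_{2k+1}(G)\le (q+\eps_1)c(n,C_{2k+1})$ for any fixed $\eps_1\in(0,1/2)$. Also $q\le(1-\eps)\sqrt N\le \delta_{C_{2k+1}}n$ once $n$ is large, since $\delta_{C_{2k+1}}$ is a positive constant depending only on $k$.

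Applying Theorem~\ref{thm:FLLM} then yields a complete bipartite graph $K_{n_1,n_2}$ such that $G$ arises from it by adding $s$ class-edges and deleting $h$ cross-edges. Moreover $h\le 2s\le 4q/\eta_{C_{2k+1}}$, so $s,h=O_k(q)$, and $|n_1-n_2|\le 4\sqrt s$.

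It remains to label the parts. Let $A$ be the larger part and $B$ the smaller, so that $|A|\ge|B|$. Since $|A|+|B|=2N+\sigma$, we can write $|A|=N+\sigma+a$ and $|B|=N-a$ with $2a+\sigma=|A|-|B|$. Because $|A|\ge|B|$, this gives $2a+\sigma\ge 0$, hence $a\ge -\sigma/2$, and $a$ is an integer, so $a\ge 0$. It also gives $a\le \tfrac12|n_1-n_2|\le 2\sqrt s$, so $a=O(\sqrt s)=O_k(\sqrt s)$.

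The only point requiring any care is this parity bookkeeping with $\sigma$, namely confirming that the integer $a$ is nonnegative when $n$ is odd; the computation above settles it. All the substantive work is carried by Theorem~\ref{thm:FLLM}.
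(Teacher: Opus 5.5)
Your proposal is correct and follows the paper's proof essentially step for step. Both arguments apply Theorem~\ref{thm:FLLM} with $F=C_{2k+1}$ and $r=2$ (the paper fixes $\varepsilon_1=\tfrac14$), take $A$ to be the larger part, and read off $a\ge 0$ and $2a+\sigma\le 4\sqrt{s}$. The only difference is that you justify $\lambda(Y_{n,2,q})>\lambda(T_{n,2})$ via Lemma~\ref{lem:Y-expansion}, where the paper simply calls this clear.
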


\begin{proof}
Let $\delta_F,\eta_F$ be the constants of Theorem
\ref{thm:FLLM} for $F=C_{2k+1}$ and, say,
$\varepsilon_1=\tfrac14$. 
Clearly, we have $\lambda(G)\ge \lambda (Y_{n,2,q}) \ge  \lambda(T_{n,2})$. Moreover,
$q\le(1-\eps)\sqrt N\le\delta_F n$ and
$\texttt{\#}\,C_{2k+1}(G)\le q\,c(n,C_{2k+1})
\le(q+\varepsilon_1)\,c(n,C_{2k+1})$.
Applying Theorem \ref{thm:FLLM} with $r=2$, we get a
partition $V(G)=A\sqcup B$ such that $G$ is obtained
from $K_{|A|,|B|}$ by adding $s$ class-edges and
deleting $h$ cross-edges, where
$h\le 2s\le 4q/\eta_F$, so $s,h=O_k(q)$, and
$\bigl||A|-|B|\bigr|\le 4\sqrt s$. We may assume
$|A|\ge|B|$, so $|A|\ge\lceil n/2\rceil=N+\sigma$.
Writing $|A|=N+\sigma+a$ and $|B|=N-a$, we get
$a\ge0$ and $2a+\sigma=|A|-|B|\le 4\sqrt s$, hence
$a\le 2\sqrt s$.
\end{proof}

The following result is a direct consequence of Theorem \ref{thm:local-spectral} in the case $r=2$. 

\begin{lemma}\label{lem:local-spectrum-odd}
Let $n=2N+\sigma$, where $\sigma\in\{0,1\}$.  Let $G$ be obtained from $ K_{p,b}$, where 
$p=N+\sigma+a$ and $ b=N-a$, 
by adding $s$ edges inside the two parts and deleting $h$ cross-edges.  Assume
\[
        a=O_k(N^{1/4}),
        \qquad
        s=O_k(\sqrt N),
        \qquad
        h=O_k(\sqrt N).
\]
Then
\[
        \lambda(G)
        \le
        \sqrt{N(N+\sigma)}
        +\frac{s-h}{N}
        -\frac{a(a+\sigma)}{2N}
        +\frac{s^2+h^2}{2N^2}
        +o(N^{-1}).
\]
\end{lemma}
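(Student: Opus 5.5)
The plan is to specialize Theorem~\ref{thm:local-spectral} to $r=2$, so that $\alpha=\tfrac12$ and $\alpha^{-1}=2$, and then rewrite each term in terms of $N$, $a$ and $\sigma$. Take $K=K_{p,b}$ with $p=N+\sigma+a$ and $b=N-a$, so that $n=p+b=2N+\sigma$. The Tur\'an graph $T_{n,2}$ is $K_{N+\sigma,N}$, and its spectral radius is exactly $\lambda(T_{n,2})=\sqrt{(N+\sigma)N}$, since $\lambda(K_{x,y})=\sqrt{xy}$. This gives the leading term directly.

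The one real step is computing $D=e(T_{n,2})-e(K)$ and checking the hypotheses. Expanding,
\[
(N+\sigma+a)(N-a)=N(N+\sigma)-a\sigma-a^2,
\]
so $D=a(a+\sigma)$. Since $a=O_k(N^{1/4})$, this gives $D=O_k(\sqrt N)=O(\sqrt n)$. Together with $s,h=O_k(\sqrt N)$, all hypotheses of Theorem~\ref{thm:local-spectral} hold, and it yields
\[
\lambda(G)\le\sqrt{N(N+\sigma)}-\frac{D}{n}+\frac{2(s-h)}{n}+\frac{2(s^2+h^2)}{n^2}+O(n^{-5/4}).
\]

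It then remains to convert powers of $n=2N+\sigma$ into powers of $N$, and each conversion costs only a lower-order error:
\begin{itemize}
\item $\frac{D}{n}=\frac{a(a+\sigma)}{2N}\bigl(1+O(N^{-1})\bigr)=\frac{a(a+\sigma)}{2N}+O(N^{-3/2})$, using $a(a+\sigma)=O(\sqrt N)$.
\item $\frac{2(s-h)}{n}=\frac{s-h}{N}+O\bigl(\frac{s+h}{N^2}\bigr)=\frac{s-h}{N}+O(N^{-3/2})$.
\item $\frac{2(s^2+h^2)}{n^2}=\frac{s^2+h^2}{2N^2}\bigl(1+O(N^{-1})\bigr)=\frac{s^2+h^2}{2N^2}+O(N^{-2})$.
\end{itemize}
The remaining error $O(n^{-5/4})$ is $o(N^{-1})$, and so are the three errors above. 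Collecting terms gives exactly the stated bound.

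I expect no genuine obstacle. The only points needing care are getting the sign and the $\sigma$-dependence of $D$ right, which is why the term reads $a(a+\sigma)$ rather than $a^2$, and confirming that $a=O_k(N^{1/4})$ is precisely what is needed for $D=O(\sqrt n)$.
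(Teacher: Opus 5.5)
Your proposal is correct and follows the paper's proof essentially step for step. You specialize Theorem~\ref{thm:local-spectral} to $r=2$, use $\lambda(T_{n,2})=\sqrt{N(N+\sigma)}$, compute $D=a(a+\sigma)=O(\sqrt N)$ to verify the hypotheses, and absorb the $O(N^{-3/2})$ cost of replacing $2N+\sigma$ by $2N$ into $o(N^{-1})$.
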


\begin{proof}
Apply Theorem \ref{thm:local-spectral} with $r=2$ and
$\alpha=\tfrac12$ to $K=K_{p,b}$, where
$n=p+b=2N+\sigma$. Note that
$\lambda(T_{n,2})=\sqrt{N(N+\sigma)}$ exactly, since
$T_{n,2}=K_{N+\sigma,N}$. Also
\[
  D=e(T_{n,2})-e(K_{p,b})
   =N(N+\sigma)-(N+\sigma+a)(N-a)
   =a(a+\sigma),
\]
so $D=O_k(\sqrt N)$ by $a=O_k(N^{1/4})$, and the
hypotheses of Theorem \ref{thm:local-spectral} hold.
The theorem yields
\[
  \lambda(G)\le\sqrt{N(N+\sigma)}
   -\frac{a(a+\sigma)}{2N+\sigma}
   +\frac{2(s-h)}{2N+\sigma}
   +\frac{2(s^2+h^2)}{(2N+\sigma)^2}
   +O_k(N^{-5/4}).
\]
Since $a(a+\sigma),\,s+h=O_k(\sqrt N)$ and
$\sigma\in\{0,1\}$, replacing $2N+\sigma$ by $2N$ in
each term changes it by at most $O_k(N^{-3/2})$,
which is absorbed into $o(N^{-1})$. This gives the
stated bound.
\end{proof}

The next lemma is the odd-cycle analogue of Lemma \ref{lem:local-counting}. 

\begin{lemma}\label{lem:local-counting-odd}
Under the condition of Lemma \ref{lem:local-spectrum-odd}, there exists a constant $C_k'>0$ such that
\[
       \texttt{\#}\, {C_{2k+1}}(G)
        \ge
        s\,c(2N+\sigma,C_{2k+1})
        -C_k' s(a+h+1)N^{2k-2}.
\]
\end{lemma}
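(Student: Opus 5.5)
We follow the proof of Lemma \ref{lem:local-counting}, replacing canonical cliques by \emph{canonical} odd cycles. Let the two parts of $K=K_{p,b}$ be $A$ and $B$, with $|A|=p=N+\sigma+a$ and $|B|=b=N-a$.

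\textbf{Canonical cycles.} Call a copy of $C_{2k+1}$ canonical if it uses exactly one class-edge $xy$ and all its other $2k$ edges are cross-edges of $K$. Such a cycle is $xy$ together with an alternating path $x,w_1,z_1,\dots,z_{k-1},w_k,y$, where the $w_i$ lie in the part not containing $x,y$ and the $z_i$ lie in the part containing $x,y$, minus $\{x,y\}$. Because a canonical cycle contains exactly one class-edge, summing over the $s$ added class-edges counts each canonical cycle once. Moreover, $\texttt{\#}\,C_{2k+1}(G)$ is at least the number of canonical cycles none of whose cross-edges was deleted.

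\textbf{Count in $K$.} Fix a class-edge $xy$ inside $A$. The number of canonical cycles through $xy$ in $K$ is $(b)_k\,(p-2)_{k-1}$. For an edge inside $B$ the count is $(p)_k\,(b-2)_{k-1}$. Since $p,b=N+O(a+1)$, both counts equal $N^{2k-1}+O_k((a+1)N^{2k-2})$. The paper's formula $c(n,C_{2k+1})=(N)_k(N-2)_{k-1}$ for $n=2N$ extends to odd $n$ by taking the minimum over the two sides, which again gives $N^{2k-1}+O_k(N^{2k-2})$. Hence each class-edge lies in at least $c(2N+\sigma,C_{2k+1})-C(a+1)N^{2k-2}$ canonical cycles of $K$. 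Here the loss is linear in $a$, not $\sqrt D$, since $D=a(a+\sigma)$ gives $a\le\sqrt D$.

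\textbf{Subtract cycles killed by deletions.} Fix a deleted cross-edge $uv$ and a class-edge $xy$. We count canonical cycles through $xy$ that contain $uv$. The edge $uv$ occupies some position along the alternating path; there are $O(k)$ positions. Once the position is chosen, $u$ and $v$ are fixed. If neither is an endpoint of $xy$, then two interior vertices are fixed and the remaining $2k-3$ interior vertices are free, giving $O_k(N^{2k-3})$ cycles. If one of them is $x$ or $y$, only one new vertex is fixed, leaving $2k-2$ free vertices and $O_k(N^{2k-2})$ cycles. In every case the count is $O_k(N^{2k-2})$. Summing over all pairs, at most $O_k(shN^{2k-2})$ canonical cycles are destroyed. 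Combining this with the previous count and enlarging the constant gives
\[
\texttt{\#}\,C_{2k+1}(G)\ge s\,c(2N+\sigma,C_{2k+1})-C_k' s(a+h+1)N^{2k-2}.
\]

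\textbf{Main obstacle.} The delicate step is checking that the cycle count is uniform in the parameters. It requires the falling-factorial expansion $(N+O(a))_k(N+O(a))_{k-1}=N^{2k-1}+O_k((a+1)N^{2k-2})$, which is routine since $a=O(N^{1/4})$. It also requires that cycles containing a deleted edge incident to $x$ or $y$ still number only $O_k(N^{2k-2})$ per pair, which the case analysis above covers.
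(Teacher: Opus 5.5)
Your proposal is correct and follows essentially the same route as the paper. You count canonical cycles (one class-edge plus an alternating path) through each added class-edge as $(|Z|)_k(|W|-2)_{k-1}=c(2N+\sigma,C_{2k+1})+O_k((a+1)N^{2k-2})$, and then subtract $O_k(N^{2k-2})$ destroyed copies for each pair of a class-edge and a deleted cross-edge. Your explicit case split (deleted edge incident to $x$ or $y$ or not) and your remark that $c(n,C_{2k+1})$ for odd $n$ is the minimum over the two sides only spell out details the paper leaves implicit.
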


\begin{proof}
We follow the proof of Lemma \ref{lem:local-counting}, replacing the
product over the other parts by the alternating-path
count of Example \ref{exam-odd-cycle}. 
Call a copy of $C_{2k+1}$ \emph{canonical} if it uses
exactly one class-edge. By the parity argument of Example
\ref{exam-odd-cycle}, every copy uses at least one
class-edge, so it suffices to lower bound the surviving
canonical copies, and each is counted once by its unique
class-edge.

Fix a class-edge $e=xy$ in a part $W$ of $K_{p,b}$, with
opposite part $Z$. The canonical copies through $e$ are
$e$ plus an alternating $x$--$y$ path of length $2k$, of
which there are $(|Z|)_k(|W|-2)_{k-1}$ in $K_{p,b}$. Since
$a=O_k(N^{1/4})$, every part has size $N+O_k(N^{1/4})$, so
this equals $c(2N+\sigma,C_{2k+1})+O_k((a+1)N^{2k-2})$.

A deleted cross-edge $uv$ kills only canonical copies that
use it as a path-edge; fixing its position and an endpoint
leaves $\le 2k-2$ free vertices, so at most
$O_k(N^{2k-2})$ copies through each $e$. Summing over the
$s$ class-edges and $h$ deleted cross-edges and enlarging
the constant gives
\[
    \texttt{\#}\,C_{2k+1}(G)
    \ge s\,c(2N+\sigma,C_{2k+1})
        -C_k'\,s(a+h+1)N^{2k-2}. \qedhere
\] 
\end{proof}

\subsection{Proof of Theorem \ref{thm-odd-cycles}}

\label{sec:odd-cycle-lower}

Since $\sqrt{n} =\sqrt{2N+\sigma} = \sqrt{2N}(1+o(1))$, the condition $q\le (1/ \sqrt{2} - \eps)\sqrt{n}$ implies 
$q\le (1- \eps' )\sqrt{N}$ for some $\eps' >0$, so it suffices to show that for any fixed $k\ge 1$ and $\eps>0$, and for all sufficiently large $N$, if $ q\le (1-\eps)\sqrt N$  
and $G$ is a graph of order $n=2N+\sigma$ (where $\sigma \in \{0,1\}$) with $\lambda(G)\ge \lambda(Y_{n,2,q})$ and $ \texttt{\#}\, {C_{2k+1}}(G)\le q\, c(n, C_{2k+1})$, then  $G$ is obtained from $T_{n,2}$ by adding $q$ edges. First,  
 Lemma \ref{lem-stability-odd} implies that 
$G$ admits a partition $V(G)=A\sqcup B$ satisfying 
$|A|=N+\sigma +a$ and $|B|=N- a$, where $a=O_k(N^{1/4})$. Moreover, writing  
$s=e(A) + e(B)$ and $h=|A||B| - e(A,B)$, we have  
$ s,h=O(\sqrt N)$. 
 Combining Lemma \ref{lem:local-spectrum-odd} and  Lemma \ref{lem:Y-expansion} gives
\begin{equation} \label{eq-diff-G-Y}
        \lambda(G)-\lambda(Y_{n,2,q})
        \le
        \frac{s-q-h}{N}
        -\frac{a(a+\sigma)}{2N}
        +\frac{s^2+h^2}{2N^2}
        +o(N^{-1}).
\end{equation}
We now split the remaining argument into three cases.

\subsection*{Case 1. $s\le q-1$}

Since $h=O(\sqrt N)$, for large $N$, we have
$ -\frac hN+\frac{h^2}{2N^2}\le 0$. 
Also $a(a+\sigma)\ge0$.  Thus the right side of (\ref{eq-diff-G-Y}) is maximized by taking $s=q-1$, $h=0$, and $a=0$.  Therefore
\[
\begin{aligned}
        \lambda(G)-\lambda(Y_{n,2,q})
        \le
        -\frac1N+\frac{(q-1)^2}{2N^2}+o(N^{-1}) 
        \le
        -\frac1N+\frac{q^2}{2N^2}+o(N^{-1}).
\end{aligned}
\]
By the assumption $q\le (1- \varepsilon) \sqrt{N}$, we have $  \frac{q^2}{2N^2}\le \frac{(1-\eps)^2}{2N}$. 
Hence
\[
        \lambda(G)-\lambda(Y_{n,2,q})
        \le
        \left(-1+\frac{(1-\eps)^2}{2}\right)\frac1N+o(N^{-1})<0,
\]
which contradicts the assumption $\lambda(G)\ge\lambda(Y_{n,2,q})$. 

\subsection*{Case 2. $s=q$}

If $a=0$ and $h=0$, then $G$ is obtained from $K_{N+\sigma, N}$ by adding $s=q$ edges. 
This is the desired extremal case.  
Next, we consider the case $ a+h\ge 1$. 
Since $s=q$, we get from (\ref{eq-diff-G-Y}) that  
\[  \lambda(G)-\lambda(Y_{n,2,q})
        \le
        -\frac hN-\frac{a(a+\sigma)}{2N}
        +\frac{q^2+h^2}{2N^2}+o(N^{-1}).
\]
Recall that $h=O(\sqrt N)$. We have $  -\frac hN+\frac{h^2}{2N^2}\le -\frac{h}{2N}$ 
for large $N$.  Since $a$ and $h$ are non-negative integers and $a+h\ge1$, we get 
$  \frac{h}{2}+\frac{a(a+\sigma)}2\ge \frac12$. 
Therefore, it follows that 
\[
        \lambda(G)-\lambda(Y_{n,2,q})
        \le
        -\frac1{2N}+\frac{q^2}{2N^2}+o(N^{-1}).
\]
Since $q\le (1- \eps) \sqrt{N}$, we get $  -\frac1{2N}+\frac{q^2}{2N^2}
        \le
        -\frac{1-(1-\eps)^2}{2N}<0$. 
This is again a contradiction.

\subsection*{Case 3. $s\ge q+1$}

Write $s=q+\ell$ with $\ell\ge1$. The assumption gives $\texttt{\#}\, {C_{2k+1}}(G)\le qc(n,C_{2k+1})$, while Lemma \ref{lem:local-counting-odd} gives $\texttt{\#}\,{C_{2k+1}}(G)\ge s c(n,C_{2k+1}) - C_k' s(a+h+1)N^{2k-2}$. Combining the two, we obtain 
\[
    qc(n,C_{2k+1})\ge (q+\ell)c(n,C_{2k+1})-C_k' s(a+h+1)N^{2k-2},
\]
so $\ell c(n,C_{2k+1})\le C_k' s(a+h+1)N^{2k-2}$. Note that $c (n,C_{2k+1}) =\Omega_k(N^{2k-1})$ for sufficiently large $N$. Substituting this and dividing by $C_k' sN^{2k-2}$ yields a constant $c_k>0$ satisfying 
\[
    a+h\ge  \frac{c_k \ell N}{s}-1.
\]

We now bound the negative terms in the spectral estimate \eqref{eq-diff-G-Y}. 
Since $a(a+\sigma)\ge a$ when $a\ge 0$, we have 
$ h+\frac{a(a+\sigma)}2\ge\frac{a+h}2\ge\frac12\big(c_k\frac{\ell N}{s}-1\big)$. 
Dividing by $N$ gives 
$$\dfrac hN+\dfrac{a(a+\sigma)}{2N}\ge\dfrac{c_k}{2}\dfrac{\ell}{s}-\dfrac1{2N}. $$
Applying \eqref{eq-diff-G-Y} with $s-q=\ell$, we have 
\[
    \lambda(G)-\lambda(Y_{n,2,q})
    \le\frac{\ell}{N}-\frac{c_k}{2}\frac{\ell}{s}
    +\frac1{2N}+\frac{s^2+h^2}{2N^2}+o(N^{-1}).
\]
By the structural reduction $s,h=O_k(\sqrt N)$, so the last three terms are $O_k(N^{-1})$ and $s\le C_k''\sqrt N$ for some constant $C_k''>0$. For large $N$, we have $\frac{c_k}{2s}-\frac1N\ge\frac{c_k}{4s}$. Hence
\[
    \frac{\ell}{N}-\frac{c_k}{2}\frac{\ell}{s}
    =-\ell\Big(\frac{c_k}{2s}-\frac1N\Big)
    \le-\frac{\ell c_k}{4s} 
    \le-\frac{c_k}{4C_k''\sqrt{N}}.
\]
Therefore, we obtain 
\[
    \lambda(G)-\lambda(Y_{n,2,q})\le-\Omega_k(N^{-1/2})+O_k(N^{-1})<0
\]
for large $N$, contradicting $\lambda(G)\ge\lambda(Y_{n,2,q})$. Thus, Case 3 is impossible.

In conclusion, if $\texttt{\#}C_{2k+1}(G)\le
q\,c(n,C_{2k+1})$, then $s=q$ and $a=h=0$, so $G$ is
obtained from $T_{n,2}$ by adding $q$ class-edges.
Each class-edge lies in at least $c(n,C_{2k+1})$
 copies of $C_{2k+1}$, and distinct
class-edges give distinct copies. Hence, we get $G\in\mathcal T_{n,2,q}$ and 
$\texttt{\#}C_{2k+1}(G)\ge q\,c(n,C_{2k+1})$.

\section{Concluding remarks}

\label{sec-concluding-remark}

In this paper, we studied the 
spectral supersaturation problems for cliques and
odd cycles, and determined the exact thresholds $\delta_{\rm spec}(K_{r+1})$ and
$\delta_{\rm spec}(C_{2k+1})$. 
We proved that the clique threshold is
$(1-\frac1r)\sqrt2$ for every $r\ge 2$ (Theorem \ref{thm-cliques}), and the odd-cycle threshold is the
universal value $1/\sqrt2$ for every $k\ge 1$ (Theorem
\ref{thm-odd-cycles}). These are the spectral supersaturation results with both the exact count $q\,c(n,F)$ and the optimal range of $q$. 
A common theme of
these spectral results is the sharp contrast with the edge setting of Lov\'asz--Simonovits and Pikhurko--Yilma in Theorems \ref{thm-edge-clique} and \ref{thm-edge-cycle}, respectively. For example,
 the range of $q$ drops down from order $n$ to order $\sqrt n$, and the clique threshold changes from
$1/r$ to $(1 \!-\! \frac1r)\sqrt2$. 
The proofs rely on the spectral supersaturation-stability method, together with a second-order spectral estimate (Theorem \ref{thm:local-spectral})  for nearly Tur\'an graphs, which controls the spectral radius of a complete $r$-partite graph after $O_r(\sqrt n)$ edge changes and may be of independent interest. As a further application, we proved a spectral supersaturation with a continuous additive gap (Theorem \ref{thm-continuous}) that bounds $\texttt{\#}F(G)$ by
the spectral increment $\lambda(G)-\lambda(T_{n,r})$. The clique case improves the
Bollob\'as--Nikiforov bound just above the Tur\'an threshold.

\smallskip 
Building on these results, we conclude with some open problems for further study.

\medskip
\noindent\textbf{Determining thresholds for all
color-critical graphs.} 
Recall that 
$\delta_{\mathrm{spec}}(F)$  is defined to be the largest constant $\delta_F$
such that for every $\eps>0$, all sufficiently large
$n$ and every integer $1\le q\le
(\delta_F -\eps)\sqrt n$, each $n$-vertex
graph $G$ with $\lambda(G)\ge\lambda(Y_{n,r,q})$ contains at
least $q\,c(n,F)$ copies of $F$. 
We conjecture that the spectral threshold $\delta_{\mathrm{spec}}(F)$ depends on $F$ only through its chromatic number, and 
this threshold may be governed by whether a single added structure can raise the spectral radius above $\lambda(Y_{n,r,q})$. 
This would contrast with the edge setting, where
the threshold $\delta_{\mathrm{edge}}(F)$ does depend on the  structural properties of $F$: it equals
$1/r$ for $K_{r+1}$, but $(r-1)/r^2$ for $K_{r+2}-e$, the clique with one edge removed \cite{PY2017}. 
A spectral threshold independent of the structure of $F$ 
would be a different phenomenon. 
The graph $K_{r+2}-e$ is
the natural first test case. 
We speculate that $K_{r+2}-e$ may have the same spectral threshold as $K_{r+1}$ despite a different edge threshold. 

\medskip
\noindent\textbf{The linear range of the increment.}
Our results for cliques and odd cycles are located in the range $q< \delta_{F} \sqrt n$. 
It would be interesting to describe the behavior above the threshold, say $q \gg  \sqrt n$, where the candidate graph $Y_{n,r,q}$ is no longer extremal, 
and the intermediate
range $\sqrt n\ll q\ll n$ is widely open. Since the
condition $\lambda(G)\ge\lambda(Y_{n,r,q})$ no longer forces $\texttt{\#}F(G)\ge q\,c(n,F)$ in this range, it would be worth determining the new 
candidate graph and the correct order of $\texttt{\#}F(G)$.

Let $T_{n,r,q}$ be obtained from Tur\'{a}n's graph $T_{n,r}$ by adding a star with $q$ edges into a largest part. 
Fang, Li, Lin and Ma \cite[Theorem 7.2]{FLLM2025} showed that the range of $q$ can be extended to be linear when taking $T_{n,r,q}$ as a candidate graph, that is,  
there is a constant $\delta_F^*>0$ such that for every integer $1\le q\le\delta_F^*n$,
the condition $\lambda(G)\ge\lambda(T_{n,r,q})$ forces
$\texttt{\#}F(G)\ge q\,c(n,F)$. It is interesting to determine the new threshold $\delta_F^*$ for every color-critical graph $F$.  
We conjecture that the threshold $\delta_F^*$ for cliques and odd cycles would be consistent with those in the edge setting; see Table \ref{tab:edge-vs-spec}.  

For Theorem \ref{thm-continuous}, we showed a continuous version for counting color-critical graphs in any graph $G$ with $\lambda (G)\ge \lambda (T_{n,r}) \, +\, C$, where $0< C \le \delta_F {n}^{-1/2}$. Naturally, it is interesting to count substructures in the host graph with arbitrary  spectral density above $\lambda (T_{n,r})$. 
For example, one could study the spectral supersaturation in the remaining  range $C\gg {n}^{-1/2}$. 
The problem of counting cliques under the edge density condition was already solved by Reiher \cite{Rei2016}.

\medskip
\noindent\textbf{Counting degenerate bipartite graphs.}
Our framework is tailored to color-critical graphs, for
which the Tur\'an number is $\Theta(n^2)$ and $T_{n,r}$ is
the unique extremal graph; this is what makes $Y_{n,r,q}$
the right comparison graph. For degenerate $F$, such as $C_{2t}$ or $K_{t,t}$, the Tur\'an number is $o(n^2)$ and the extremal
graphs are sparse, so neither $T_{n,r}$ nor $Y_{n,r,q}$
has an obvious analogue, and even the correct order of the
threshold $q$ is unclear. Building a spectral
supersaturation theory for degenerate $F$, relative to the
appropriate sparse extremal structures, appears to require new ideas.

Here, we present a possible problem for counting $C_{2t}$. Let $$S_{t-1,n}^+:=K_{t-1}\vee I_{n-t+1}^+$$
be the $n$-vertex graph obtained from the join $K_{t-1}\vee I_{n-t+1}$ by adding an edge into its independent set. A breakthrough of 
Cioab\u{a}, Desai and Tait \cite{CDT2024} shows  that for any fixed $t\ge 3$, if $n$ is sufficiently large and  $G$ is an $n$-vertex graph with $\lambda (G) > \lambda (S_{t-1,n}^+)$, then $G$ contains a $2t$-cycle. 
We refer the readers to \cite{BDT2025} for a general spectral theorem for degenerate bipartite graphs. 
Inspired by the above result, we propose the following spectral supersaturation problem for $C_{2t}$ for every $t\ge 3$. 

\begin{problem}[Counting even cycles]
For every integer $t\ge 3$, does there exist a constant $c(t) >0$ such that if $n$ is sufficiently large and 
$G$ is an $n$-vertex graph with $\lambda (G) > \lambda (S_{t-1,n}^+)$, then 
$$ \texttt{\#}\, C_{2t} (G) \ \ge \ c(t)\cdot n^{t-3}\ ? $$
 If the above is true, 
 what is the largest value of such $c(t)$?  
\end{problem}

\smallskip 
Finally, a complementary direction replaces the vertex-spectral 
condition $\lambda(G)\ge\lambda(T_{n,r})$ by the
edge-spectral condition $\lambda^2(G)\ge {(1-\frac{1}{r})2m}$, in the spirit
of Nikiforov's theorem \cite{Nikiforov}. 
For counting triangles and books, this scale
is the natural one and has been well-studied; we refer the readers to \cite{NZ2021, ZLL2026, ZZ2026, ZYZZ2026}.  
It would be interesting to develop the corresponding supersaturation counts and 
to connect both the two scales for general graphs $F$; see  
 \cite{LLZ-edge-spectral, LLZ-edge-color-critical, LLZ2026-Spec-Sido, LLS2026-odd-cycles} for recent developments.

\section*{Acknowledgment} 
The corresponding author would like to thank Prof.~Bo Ning for raising the optimal spectral supersaturation problem after a talk at Anhui University in May 2025. 
Thanks also go to Dr. Longfei Fang for helpful discussions during the study of the spectral graph problem over the past few years.
In an early exploratory stage of the project, 
the authors additionally used language-model-based tools to brainstorm candidate proof strategies. 
The authors developed the arguments, wrote and verified all proofs.  
Any suggestions arising from these tools served only as informal inspiration.

\end{document}